\title[Stability of Gel'fand's inverse problem for Schr{\"o}dinger operators]{Stability of Gel'fand's inverse interior spectral problem for Schr{\"o}dinger operators}
\author{Jinpeng Lu}
\address{Jinpeng Lu: Department of Mathematics and Statistics, University of Helsinki, Finland} 
\email{jinpeng.lu@helsinki.fi}
\theoremstyle{theorem}
\newtheorem{lemma}{Lemma}[section]
\newtheorem{proposition}[lemma]{Proposition}
\newtheorem{coro}[lemma]{Corollary}
\newtheorem{theorem}[lemma]{Theorem}
\newtheorem{main1}{Theorem}
\newtheorem{main-coro}[main1]{Theorem}
\theoremstyle{definition}
\newtheorem{definition}[lemma]{Definition}
\newtheorem{remark}[lemma]{Remark}
\numberwithin{equation}{section}
\def\R{\mathbb R}
\def\N{\mathbb N}
\def\M{\mathcal{M}}
\def\h{\widehat}
\def\e{\varepsilon}
\def\CL{C_3}
\def\Cpot{C_9}
\def\Cd{C_7}
\def\Vol{{\rm vol}}
\def\rL{r_L}
\def\cLip{c_2}
\def \inj {{\rm inj}}
\def\ceigen {c_1}
\def \appnorm {\mathcal{L}^a}
\def \CHau {C_3}
\def\Cq {C_0}
\def \CLap {C_8}
\subjclass[2020]{35R30, 58J50, 53B21}
\keywords{Gel'fand's inverse problem, quantitative stability, Schr{\"o}dinger operator, quantitative unique continuation, graph Laplacian.}
\begin{document}
\maketitle


\vspace{-6mm}

\begin{abstract}

We study Gel'fand's inverse interior spectral problem of determining a closed Riemannian manifold $(M,g)$ and a potential function $q$ from the knowledge of the eigenvalues $\lambda_j$ of the Schr{\"o}dinger operator $-\Delta_g + q$ and the restriction of the eigenfunctions $\phi_j|_U$ on a given open subset $U\subset M$, where $\Delta_g$ is the Laplace-Beltrami operator on $(M,g)$.
We prove that an approximation of finitely many spectral data on $U$ determines a finite metric space that is close to $(M,g)$ in the Gromov-Hausdorff topology, and further determines a discrete function that approximates the potential $q$ with uniform estimates.
This leads to a quantitative stability estimate for the inverse interior spectral problem for Schr{\"o}dinger operators in the general case.

\end{abstract}

\vspace{2mm}

\section{Introduction}

Gel'fand's inverse problem \cite{Ge} concerns the unique determination of a Schr{\"o}dinger operator on a bounded domain of $\mathbb{R}^n$ from its spectral data on the boundary.
The problem can be formulated as an inverse boundary spectral problem for the boundary spectral data, that is, the Dirichlet eigenvalues and the normal derivative of orthonormalized eigenfunctions on the boundary, see \cite{KKL,KKLM}.
It was proved in \cite{NSU}, and for equivalent formulations \cite{Novikov,RS}, that the inverse boundary spectral problem is uniquely solvable on given bounded domains of $\R^n$ based on the method of complex geometric optics developed in \cite{SyU}.

Gel'fand's inverse problem has a natural geometric formulation on Riemannian manifolds: can the Riemannian metric $g$ and the potential function $q$ be uniquely determined from the boundary spectral data of a Schr{\"o}dinger operator $-\Delta_g + q$ on a compact Riemannian manifold with boundary? Typically one does not assume to know the interior of the manifold itself, which needs to be determined as part of the problem.
In the 1990s, the combination of the Boundary Control method, introduced originally on $\R^n$ in \cite{B87}, and Tataru’s optimal unique continuation theorem established the unique solvability for the inverse problem for smooth Riemannian metrics \cite{BK,T95}.
The unique solvability for metrics of lower regularity (of Zygmund class $C^2_{*}$) was proved in \cite{AKKLT}.
Analogously, one could consider the same type of inverse problem on closed Riemannian manifolds (i.e., compact without boundary).
It was known by \cite{HLOS,KrKL,KLLY} that a closed Riemannian manifold $(M,g)$ is uniquely determined from the eigenvalues $\lambda_j$ of the Laplace-Beltrami operator $\Delta_g$ and the restriction of orthonormalized eigenfuctions $\phi_j|_U$ on a given (possibly small) nonempty open subset $U\subset M$.
The formulation is equivalent to an inverse problem for the wave equation from the source-to-solution map \cite{KKLM,Lassas}, and has applications in the fractional Calder{\'o}n problem on closed manifolds \cite{FGKU}.
Following \cite{BKL}, we refer to this formulation on closed Riemannian manifolds as the \emph{inverse interior spectral problem}.

\smallskip
We study the stability of Gel'fand's inverse problem on Riemannian manifolds.
The problem is ill-posed in the sense of Hadamard, as one can make large changes to the manifold while only cause small effects on the spectral data measured in a small subset. To stabilize the inverse problem, it is natural to impose \emph{a priori} bounds on geometric parameters and consider the stability issue in a compact class of manifolds.
Gel'fand's inverse spectral problem enjoys continuity in the general class of $n$-dimensional Riemannian manifolds with bounded diameter, sectional curvature and injectivity radius \cite{AKKLT,KLLY}, however with no information on the modulus of continuity.
With additional geometric assumptions, strong H{\"o}lder-type stability estimates for the determination of Riemannian metrics were obtained in \cite{SU98,SU05} when the metric is close to being simple (i.e., with convex boundary and no cut points).
In the case that the metric is Euclidean \cite{AS90,IS92,Sun} or close to a fixed simple metric \cite{BD10,BD11,Mon}, the determination of the potential function also exhibits H{\"o}lder-type stability.
For non-simple metrics, sensitivity results were established for non-trapping metrics in a given conformal class under a fold-regular condition for the conjugate points \cite{BZ}, and under the foliation condition \cite{SUV16} introduced in \cite{SUV}.
In a geometric class that relaxes the non-trapping assumption, a H{\"o}lder-type stability estimate for the determination of the potential was obtained in \cite{AG}, assuming the absence of conjugate points.

In the general case we do not expect a H{\"o}lder-type stability estimate to hold for Gel'fand's inverse problem. 
In fact, it is proved recently in \cite{BLOS} that a closely related inverse problem of recovering the potential $q$ in a wave equation with an obstacle is exponentially unstable, based on the methods established in \cite{KRS}.
The stability of reconstructing the Riemannian metric in Gel'fand's inverse spectral problem with zero potential $q=0$ is known to be at least double-logarithmic in general geometry \cite{BKL,BILL}.
The proofs were based on the quantitative unique continuation for the wave operator that was proved in \cite{BKL18,BKL16} and \cite{LL} independently.
Recently using quantitative unique continuation approach, a double-logarithmic stability estimate for recovering the potential $q$ from the source-to-solution map for the wave equation over $\R^{n}$ is obtained by \cite{FO1,FO2} for general configurations.
To the best of our knowledge, a stability estimate for Gel'fand's inverse problem with nonzero potential $q$ is still unknown in general geometry.
The main purpose of this paper is to prove a double-logarithmic stability estimate in the general case for the determination of both the Riemannian manifold and the potential $q$ in Gel'fand's inverse interior spectral problem for Schr{\"o}dinger operators.
Our proof is based on the quantitative unique continuation and a graph discretization of Laplacians, and provides an algorithm to reconstruct a discrete approximation of the Riemannian manifold and the potential.

\medskip
Let $(M,g)$ be a connected closed smooth Riemannian manifold of dimension $n\geq 2$ in a class with bounded geometry defined by
\begin{eqnarray} \label{def-class}
&{\rm diam}(M)\leq D, \quad |{\rm Sec}(M) |\leq K^2, \quad \Vol(M)\geq v_0>0, \\
& \|\nabla R(M) \| + \|\nabla^2 R(M) \| \leq K_2, \label{bound-C2}
\end{eqnarray}
where ${\rm diam}(M), \,{\rm Sec}(M), \,\Vol(M)$ denote the diameter, sectional curvature and Riemannian volume of $M$, and $\nabla^k R(M)$ is the $k$-th covariant derivative of the curvature tensor $R(M)$ of $M$.
We denote this class of Riemannian manifolds by $\M(n,D,K,K_2,v_0)$.

Let $q:M\to \R$ be a smooth function satisfying
\begin{equation}\label{bound-q}
\|q\|_{C^{0,1}(M)}\leq \Cq \, ,
\end{equation}
for some constant $C_0>0$.
We consider the Schr{\"o}dinger operator $-\Delta_g +q$, where $\Delta_g$ is the Laplace-Beltrami operator on $(M,g)$.
Denote by $\lambda_j$ the $j$-th eigenvalue of the Schr{\"o}dinger operator $-\Delta_g +q$, and by $\phi_j$ an eigenfunction corresponding to $\lambda_j$.
The eigenvalues are counted according to their multiplicities: $\lambda_1<\lambda_2 \leq \lambda_3 \leq \cdots $, with $\lambda_j \to +\infty$ as $j\to \infty$.
We assume that the eigenfunctions are orthonormalized so that $\{\phi_j\}_{j=1}^{\infty}$ is an orthonormal basis of $L^2(M)$.
Given any open subset $U\subset M$, the \emph{interior spectral data} of $-\Delta_g + q$ on $U$ refers to a collection of data
$$\Big( U, \big\{\lambda_j, \phi_j|_U \big\}_{j=1}^{\infty} \Big),$$
where $\phi_j|_U$ is the restriction of the eigenfunction $\phi_j$ on $U$.
Note that the choice of orthonormalized eigenfunctions in each eigenspace is not unique and different choices differ by orthogonal transformations.

\begin{definition} \label{def-data-close}
Suppose that the set $U\subset M$ and the Riemannian metric $g|_U$ on $U$ are given.
We say a collection of data $\{\lambda_j^a, \phi_j^a|_U\}$ is a $\delta$-approximation of the spectral data of $-\Delta_g +q$ on $U$, if there exists a choice of interior spectral data $\{\lambda_j, \phi_j|_U\}_{j=1}^{\infty}$ of $-\Delta_g +q$ on $U$ such that the following conditions are satisfied for all $j \leq \delta^{-1}\,$:
\begin{itemize}
\item[(1)] $\lambda_j \in \R$, $\phi_j^a|_U \in C^{0,1}(U)$;
\item[(2)] $|\lambda_j-\lambda_j^a| <\delta$;
\item[(3)] $\|\phi_j-\phi_j^a\|_{C^{0,1}(U)} < \delta$.
\end{itemize}
Let $M_1,M_2$ be two Riemannian manifolds, and $U_1,U_2$ be open subsets of $M_1,M_2$, respectively. Suppose that $U_1$ is isometric to $U_2$ via a Riemannian isometry $F$. 
For $i=1,2$, we say the spectral data of $M_i$ on $U_i$ are $\delta$-close, if the pull-back via $F$ of the spectral data of $M_2$ on $U_2$ is a $\delta$-approximation of the spectral data of $M_1$ on $U_1$.
\end{definition}

In particular, the finite spectral data $\{\lambda_j, \phi_j|_U\}_{j=1}^{J}$ is a $J^{-1}$-approximation of the complete spectral data $\{\lambda_j, \phi_j|_U\}_{j=1}^{\infty}$ on $U$ by Definition \ref{def-data-close}.

\smallskip
Our first main result is a reconstruction of both the Riemannian manifold $(M,g)$ and the potential function $q$ from an approximation of the interior spectral data of $-\Delta_g + q$ on a given nonempty open subset $U\subset M$.

\begin{main1}\label{main1}
There exist uniform constants $C_1,C_2,C_3>1$ 
such that the following holds.
Let $(M,g) \in \M(n,D,K,K_2,v_0)$ and $U\subset M$ be an open set containing a ball of radius $r_0>0$.
Suppose that the set $U$ and the Riemannian metric $g|_U$ on $U$ are given.
Let $q$ be a potential function on $M$ satisfying $\|q\|_{C^{0,1}(M)}\leq \Cq$.
Then for any $\e>0$, there exists $\delta>0$, such that any $\delta$-approximation $\{\lambda_j^a,\phi_j^a |_U\}$ of the spectral data of $-\Delta_g + q$ on $U$ determine a Riemannian manifold $(\widehat{M},\widehat{g})$ and numbers $\h{q}_i$, for $i=1,\cdots,I_0$, satisfying the following properties.

\begin{itemize}
\item[(1)] 
The Riemannian manifold $\widehat{M}$ is diffeomorphic to $M$ and satisfies
$$d_{L}\big( (M,g), (\widehat{M},\widehat{g}) \big)\leq C_1\e^{\frac{1}{12}},$$
where $d_{L}$ denotes the Lipschitz distance between metric spaces.

\item[(2)] 
There exists a $\CHau\e$-net $\{x_i\}_{i=1}^{I_0}$ in $M$ such that
$$\big| \, \h{q}_i - q(x_i) \big| \leq C_2\,\e^{\frac{1}{80n}}, \quad \textrm{ for all }i=1,\cdots,I_0.$$
\end{itemize}
The constants $C_1,C_2,C_3$ depend on $n,D,K,K_2,v_0,r_0,\Cq$, and
the choice of $\delta$ can be made explicit in $\e$, depending on the same set of parameters.
\end{main1}



We note that the estimates in Theorem \ref{main1} are uniform over the class $\M(n,D,K,K_2,v_0)$, independent of individual manifolds.
The definition of the Lipschitz distance is recalled in \eqref{def-Lip-distance}.
In particular, the estimate on the Lipschitz distance implies the same estimate on the Gromov-Hausdorff distance.
In fact, we will construct in Proposition \ref{prop-metric} a finite metric space which gives the existence of an $\e^{\frac18}$-isometry from the finite metric space to $(M,g)$.
This gives an approximation of $(M,g)$ in the sense of Gromov-Hausdorff.
Recall that a map $f:X\to Y$ between two metric spaces is called an $\e$-isometry if $f(X)$ is an $\e$-net in $Y$ and $| d_Y (f(x),f(x')) -d_X(x,x') | < \e$ for all $x,x'\in X$.
The improvement from the Gromov-Hausdorff distance to the Lipschitz distance is due to \cite{FIKLN}.
Our reconstruction of the potential function $q$ is based on a graph discretization of Laplacians, see Remark \ref{remark-Lap}.

\smallskip
A consequence of Theorem \ref{main1} is quantitative stability of the inverse interior spectral problem.

\begin{main-coro} \label{main-coro}
There exists a uniform constant $\delta_0>0$ such that the following holds.
For $i=1,2$, let $(M_i,g_i) \in \M(n,D,K,K_2,v_0)$ and $U_i\subset M_i$ be an open set containing a ball of radius $r_0>0$ in $M_i$. Suppose that $(U_1,g_1|_{U_1)}$ is isometric to $(U_2,g_2|_{U_2})$.
Let $q_i$ be a potential function on $M_i$ satisfying $\|q_i\|_{C^{0,1}(M_i)}\leq \Cq$.
If the spectral data of $-\Delta_{g_i}+q_i$ on $U_i$ are $\delta$-close for $\delta < \delta_0$, then $M_1$ is diffeomorphic to $M_2$, and 
$$d_{L}\big( (M_1,g_1) , (M_2,g_2) \big) \leq \omega(\delta),$$
where $\omega(\delta) = C_1 \big(\log|\log \delta| \big)^{-C_4}.$
Moreover, there exists a $\omega(\delta)$-isometry $\Psi:M_1\to M_2$ such that
$$\| \, q_1 - q_2 \circ \Psi  \|_{L^{\infty}(M_1)}\leq C_2 \, \omega(\delta) .$$
The constants $C_1,C_2,\delta_0$ depend on $n,D,K,K_2,v_0,r_0,\Cq$, and $C_4$ depends only on $n$.
\end{main-coro}

We note that the map $\Psi$ depends on $\delta$, and is constructed essentially by the correspondence between the $\CHau\e$-nets of $M_1, M_2$ stated in Theorem \ref{main1}.

\smallskip
Let us remark on the idea of proofs. 
For the reconstruction of the Riemannian metric, we mainly follow the quantitative version of the geometric Boundary Control method developed in \cite{KKL04,BKL,BILL} and applied to the first eigenfunction (that does not change sign) in Section \ref{sec-metric}.
With relaxed regularity conditions on the curvature tensor in our setting, we need to establish the bi-Lipschitz property of distance coordinates, Proposition \ref{coordinate-Lip}, using a different method adapted to lower regularity setting based on Toponogov's theorem inspired by \cite{Ivanov1,Ivanov2}.
This is discussed in Section \ref{sec-coordinate}. This consideration is crucial to us avoiding a third logarithm in the main result.
Then we can approximate the interior distance functions from an approximation of the spectral data by defining a slicing procedure, which further determines a distance $\h{d}$ on a finite $\e^{\frac12}$-net $X$ in $M$ such that $(X,\h{d})$ is close to $(M,g)$ in the Gromov-Hausdorff topology.
Note that for the purpose of reconstructing the potential later, it is necessary to modify the previously known procedures so that different slices of the manifold are disjoint.
The details are explained in Section \ref{subsec-slicing}.

For the reconstruction of the potential, we use a new approach based on graph discretizations.
Previous works in the literature require given smooth manifold structure to construct geometric optics solutions, which is not available for our problem.
In Section \ref{sec-potential}, we construct a discrete function on the finite space $X$ such that it approximates $q|_X$ with uniform estimates.
To this end, it suffices to construct an approximation of $\phi_1|_X$ and $(\Delta_g \phi_1) |_X$ for the first eigenfunction $\phi_1$.
The pointwise values of $\phi_1$ can be approximated essentially by averaging in a larger ball (of a radius lower order than the Gromov-Hausdorff approximation).
The Laplacian of $\phi_1$ can be approximated by a weighted graph Laplacian on $X$ of the form
$$(\Delta_X f)(x_i) =  \frac{C(n)}{\Vol(B(x_i,\rho)) \rho^2} \sum_{k:\, d(x_i,x_k)<\rho} \Vol(V_k) \big( f(x_k)-f(x_i) \big), \quad \textrm{ for }x_i \in X,$$
where $C(n)$ is a normalization constant and the sets $V_k$ form a partition of the manifold.
It is expected that this type of discrete operator can approximate the Laplace-Beltrami operator in the spectral sense \cite{BIK,BIK2,Lu}, and in probabilistic settings widely studied in machine learning, see e.g. \cite{AV25,BN,GGHS,GHL,V}.
However, the issue is that all quantities appearing in the discrete operator above are unknown since the manifold or metric is not given.
In Section \ref{sec-potential},
we prove that all these quantities can be approximated using the metric approximation $\h{d}$ and the modified slicing procedure in Section \ref{sec-metric} with uniform estimates.



\smallskip
It is not known to us if the dependence of our stability estimates on the covariant derivatives of the curvature tensor \eqref{bound-C2} can be removed.
The additional regularity conditions on the curvature tensor is needed for uniformly controlling the higher-order terms in the Taylor expansion of the first eigenfunction $\phi_1$ in geodesic normal coordinates, see Proposition \ref{appro-Lap}. We make the essential use of the property that the relative distortion of metric in geodesic normal coordinates is in the second order, which does not hold in harmonic coordinates.
Furthermore, it is desirable to develop quantitative stability methods for inverse problems under more general Ricci curvature bounds, due to potential applications in general relativity.
Gel'fand's inverse spectral problem is continuous in the class of Riemannian manifolds with bounded Ricci curvature, diameter, and injectivity radius bounded from below \cite{AKKLT}.
Inspired by the integral version of the Toponogov comparison theorem for Ricci curvature developed in \cite{Colding3}, we ask if a quantitative stability estimate for the inverse problem can be established in this class.

\smallskip
{\bf Question.} \emph{Does Theorem \ref{main-coro} hold in the class of $n$-dimensional closed Riemannian manifolds with bounded Ricci curvature, diameter, and injectivity radius bounded from below?}

\medskip
\noindent {\bf Acknowledgements.} I would like to thank Shouhei Honda and Anton Petrunin for helpful discussions on spectral convergence in RCD spaces and angle semi-continuity in Alexandrov~spaces.

\section{Preliminaries}

We start by making the following observation on the potential $q$ satisfying \eqref{bound-q}.
To prove the main results, it suffices to assume that 
\begin{equation} \label{bound-q-positive}
\Cq^{-1} \leq q \leq \Cq, \quad \|q\|_{C^{0,1}(M)}\leq \Cq, \quad \textrm{ for some constant }\Cq>1.
\end{equation}
This is because otherwise one can consider our problem for the potential $q+2\Cq$ (satisfying the conditions above with constant $3C_0$).
Recall the standard min-max formula that
$$\lambda_k =\inf_{Q_k} \sup_{v\in Q_k\setminus \{0\}} \frac{\int_M |\nabla v|^2 + \int_M q v^2}{\int_M v^2},$$
where the infimum is taken over all $k$-dimensional subspaces $Q_k$ of the Sobolev space $H^1(M)$.
In particular, under the condition \eqref{bound-q-positive}, we have
\begin{equation} \label{bound-lambda1}
\Cq^{-1} \leq \lambda_1 \leq \Cq,
\end{equation}
for the first eigenvalue $\lambda_1$. 
The first eigenvalue $\lambda_1$ is simple and has a positive first eigenfunction, see e.g. \cite[Thm. 8.38]{GT}.
From now on, we suppose that $\phi_1>0$ on $M$.
The following lemma states that there is a uniform lower bound for the $L^2$-normalized first eigenfunction $\phi_1$. This is due to \cite{Honda18}, see related spectral convergence results in \cite{Honda24, KK94}.

\begin{lemma} \label{lemma-eigen-positive}
Let $(M,g) \in \M(n,D,K,v_0)$ and $\|q\|_{C^{0,1}(M)}\leq \Cq$.
Let $\phi_1$ be the (positive) first eigenfunction of $-\Delta_g +q$ with $\|\phi_1\|_{L^2(M)}=1$.
Then there exists a uniform constant $\ceigen=\ceigen(n,D,K,v_0,\Cq)>0$ such that $\phi_1\geq \ceigen$.
\end{lemma}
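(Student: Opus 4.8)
The plan is to obtain a uniform lower bound on the normalized first eigenfunction $\phi_1$ by combining a uniform Harnack inequality for the Schr\"odinger operator with the $L^2$-normalization, using that the manifolds in $\M(n,D,K,v_0)$ have uniformly bounded geometry. First I would fix $(M,g)\in\M(n,D,K,v_0)$ and note that by \eqref{bound-lambda1} the first eigenvalue satisfies $\lambda_1\le \Cq$, so $\phi_1>0$ solves $-\Delta_g\phi_1 = (\lambda_1-q)\phi_1$ with a coefficient bounded in absolute value by $2\Cq$. Since $\phi_1$ is a positive solution of a linear elliptic equation $-\Delta_g u + c\, u = 0$ with $\|c\|_{L^\infty}\le 2\Cq$, the De Giorgi--Nash--Moser Harnack inequality applies: for any geodesic ball $B(x,r)$ with $r$ below a scale determined by the geometry, $\sup_{B(x,r/2)}\phi_1 \le C_H\,\inf_{B(x,r/2)}\phi_1$, where $C_H$ depends only on $n$, the ellipticity and volume-doubling constants, and $\|c\|_{L^\infty}$. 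Under $|{\rm Sec}|\le K^2$, ${\rm diam}\le D$, $\Vol(M)\ge v_0$, Bishop--Gromov and the standard volume comparison give uniform volume doubling and a uniform lower bound on the injectivity radius (the latter via Cheeger's lemma), so $C_H$ and the admissible radius $r_H$ are uniform over the class.

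Next I would pass from the local Harnack inequality to a global one by a chaining argument: since ${\rm diam}(M)\le D$, any two points can be joined by a chain of at most $N = N(D,K,v_0)$ overlapping balls of radius $r_H/2$, so iterating Harnack yields $\sup_M \phi_1 \le C_H^{\,N}\inf_M \phi_1 =: C'\inf_M\phi_1$ with $C'$ uniform. Then the $L^2$-normalization forces a two-sided control: on the one hand $1 = \int_M \phi_1^2 \le \Vol(M)\,(\sup_M\phi_1)^2 \le \Vol(M)\,(C')^2(\inf_M\phi_1)^2$, which combined with the upper bound $\Vol(M)\le V(n,D,K)$ from Bishop--Gromov gives $\inf_M\phi_1 \ge \big(V(n,D,K)\big)^{-1/2}(C')^{-1} =: \ceigen(n,D,K,v_0,\Cq)>0$. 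This is exactly the claimed uniform bound $\phi_1\ge \ceigen$.

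I would double-check the hypotheses align: the statement only references $\M(n,D,K,v_0)$ and $\|q\|_{C^{0,1}}\le\Cq$, so I should be careful that the Harnack constant depends only on an $L^\infty$ bound on $q$ (which follows from the $C^{0,1}$ bound and, after the reduction \eqref{bound-q-positive}, from $\Cq$), not on derivative bounds — this is fine since Moser's Harnack inequality needs only bounded measurable coefficients. I should also note that the admissible radius and doubling constant are genuinely scale-uniform because the curvature bound $|{\rm Sec}|\le K^2$ controls the metric in harmonic (or geodesic normal) coordinates at a fixed scale. The main obstacle, and the only place requiring care, is establishing the uniform volume-doubling constant and uniform lower injectivity radius bound over the whole class $\M(n,D,K,v_0)$ so that the Harnack constant $C_H$ and the chaining length $N$ are truly uniform; once that is in place, the argument is a routine Harnack-plus-normalization estimate, and indeed this is the content of the cited result \cite{Honda18}, so I would either invoke it directly or reproduce the short argument above.
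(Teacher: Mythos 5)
Your argument is correct but takes a genuinely different route from the paper. The paper argues by compactness and contradiction: assuming the lower bound fails along a sequence $(M_i,q_i)$, it invokes Anderson's $C^{1,\alpha}$ precompactness of $\M(n,D,K,v_0)$, extracts a uniform limit $\psi$ of the normalized first eigenfunctions via interior H\"older estimates and Arzel\`a--Ascoli, and then uses Honda's spectral convergence theorem \cite{Honda18} to identify $\psi$ as the strictly positive first eigenfunction of the limit operator, contradicting $\psi(y_0)=0$. Your proof is instead direct and quantitative: you observe that $\phi_1>0$ solves $-\Delta_g u + (q-\lambda_1)u=0$ with $\|q-\lambda_1\|_{L^\infty}\le 2\Cq$, apply the Moser--Harnack inequality at a uniform scale $r_H$ (legitimate because bounded sectional curvature plus the injectivity-radius bound \eqref{inj} give uniform doubling and Poincar\'e constants), chain across $M$ using ${\rm diam}(M)\le D$, and then trade the resulting global Harnack inequality against the $L^2$-normalization and the Bishop--Gromov volume upper bound to get an explicit $\ceigen$. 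What your approach buys is an explicit, constructive constant and avoidance of the rather heavy RCD spectral-convergence machinery; what the paper's approach buys is brevity, given that it cites \cite{A90} and \cite{Honda18} as black boxes. One small clean-up: you cite \eqref{bound-lambda1} for $\lambda_1\le\Cq$, but that bound is stated under the positivity reduction \eqref{bound-q-positive}; under the lemma's stated hypothesis $\|q\|_{C^{0,1}(M)}\le\Cq$ you should instead use the elementary min-max bound $|\lambda_1|\le\|q\|_{C^0}\le\Cq$ (test with the normalized constant function), which you in fact acknowledge at the end and which is all the Harnack constant needs.
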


\begin{proof}
We argue by contradiction. Suppose the claim is not true: then there exists a sequence of manifolds $M_i \in \M(n,D,K,v_0)$ and functions $q_i$ such that the following holds. There exists a sequence of points $y_i\in M_i$ such that $\phi_{1,i}(y_i)\to 0$, where $\phi_{1,i}$ is the first eigenfunction of $-\Delta_{M_i} + q_i$ on $M_i$ with $\|\phi_{1,i}\|_{L^2(M_i)}=1$.
By the compactness theorem \cite{A90}, there is a subsequence of $M_i$ converging to a $C^{1,\alpha}$-Riemannian manifold $M_0$ in the $C^{1,\alpha}$-topology.
Up to a subsequence, we may assume that $y_i$ converge to a point $y_0\in M_0$.
On each $M_i$, using the form of Laplacian in local $C^{1,\alpha}$-harmonic coordinates, $\|q_i\|_{C^{0,1}}\leq \Cq$ and the first eigenvalue $\lambda_1 \leq \|q\|_{C^0}$, uniform H\"older estimates apply to $\phi_{1,i}$, see \cite[Thm. 8.24]{GT}.
By the Arzela-Ascoli theorem, there exists a subsequence of $\phi_{1,i}$ converging to $\psi$ uniformly on $M_0$, and $q_i$ converging to a Lipschitz function $q_0$ uniformly on $M_0$.
In particular, since $y_i\to y_0$ we have $\psi(y_0)=0$.
Then by \cite[Thm. 1.5]{Honda18}, $\psi$ is the (positive) first eigenfunction of $-\Delta_{M_0}+q_0$. In particular $\psi>0$ on $M_0$, which is contradiction to $\psi(y_0)=0$.
\end{proof}

Our reconstruction method relies on the quantitative unique continuation.
Consider the wave equation on $(M,g)$ with a Lipschitz potential function $q$,
\begin{equation} \label{wave-Y-intro}
(\partial_t^2-\Delta_g+q) u= f,
\end{equation}
where $\Delta_g$ is the Laplace-Beltrami operator on $(M,g)$. 
The following is a quantitative unique continuation result for $C^2$-Riemannian manifolds \cite[Thm. 1.3]{LLY}.
Although this result was originally done for $q=0$, adding a zero-th order potential term does not affect the proof except that the constants would additionally depend on $\|q\|_{C^0}$, see \cite[Thm. 1.2]{BKL16}.

\begin{theorem}[\cite{LLY}]\label{uc-Y}
Let $(M,g)$ be a closed Riemannian manifold satisfying
$$
{\rm dim}(M)=n\geq 2, \quad  {\rm diam}(M)\leq D, \quad |{\rm Sec}(M)|\leq K^2, \quad \Vol(M)\geq v_0>0.
$$
Suppose that $u\in H^1(M\times[-T,T])$ is a solution of the wave equation \eqref{wave-Y-intro} with $f\in L^2(M\times [-T,T])$. Let $U\subset M$ be an open subset with smooth boundary.
If 
$$\|u\|_{H^1(M\times[-T,T])}\leq \Lambda,\quad \|u\|_{H^{1}(U\times [-T,T])}\leq \varepsilon_0,$$
then for $0<h<h_0$, we have
$$\|u\|_{L^2(\Omega(h))}\leq C_5 \exp(h^{-C_6})\frac{\Lambda}{\bigg(\log \big(1+\frac{h^{-1}\Lambda}{\|f\|_{L^2(M\times[-T,T])}+h^{-2}\varepsilon_0}\big)\bigg) ^{\frac{1}{2}}}\, .$$
The domain $\Omega(h)$ is defined by
$$\Omega(h) :=\big\{(x,t)\in (M \setminus U)\times [-T,T]: T-|t|-d(x,U) > \sqrt{h} \big\}.$$
The constants $C_5,h_0$ depend on $n,D,K,v_0,T,\|q\|_{C^0}$, and $C_6$ depends only on $n$. 
\end{theorem}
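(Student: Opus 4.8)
To prove Theorem~\ref{uc-Y} I would reduce the global estimate to an iterated application of a \emph{local} quantitative unique continuation statement for the wave operator, which itself follows from a quantitative version of Tataru's unique continuation theorem built on Carleman estimates and the FBI (Fourier--Bros--Iagolnitzer) transform.

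\textbf{The local step.} Fix $x_0\in M$, a time $t_0$, and a weight $\psi(x,t)=d(x,x_0)^2-(t-t_0)^2$ augmented by lower-order convexifying terms, chosen so that its level sets are non-characteristic for $\partial_t^2-\Delta_g$ and H\"ormander's pseudoconvexity condition holds. A Carleman estimate with weight $e^{\tau\psi}$ --- whose constants depend only on a $C^2$-bound on $g$ and an $L^\infty$-bound on $q$, since only two derivatives of the metric enter the relevant symbol computations --- controls $\|u\|$ on $\{\psi>c\}$ by its $H^1$-norm on a thin collar of $\{\psi=0\}$, for $\tau$ large. Optimizing in $\tau$ and interpolating between the global bound $\Lambda$ and the smallness $\varepsilon_0$ on $U$ turns this into a logarithmic modulus: $u$ is of size $\Lambda\big(\log(\Lambda/\varepsilon_0)\big)^{-1/2}$ on $\{\psi>c\}$, up to a factor growing like $\exp(c^{-C_6})$ as the collar shrinks. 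The FBI transform is what upgrades the conditional, pseudoconvex region $\{\psi>c\}$ to the \emph{sharp} physical domain of influence, and is responsible both for the clean form of $\Omega(h)$ through $T-|t|-d(x,U)>\sqrt{h}$ and for the exponent $\tfrac12$ on the logarithm.

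\textbf{The propagation.} Cover $\overline{\Omega(h)}$ by a chain of $N$ regions of the above type, each advancing the reachable set by a step of order $\sqrt{h}$, starting from a region adjacent to $U\times[-T,T]$ where the data $\varepsilon_0$ is recorded. Using $\mathrm{diam}(M)\le D$ and $T$ fixed, $N$ is bounded by a power of $h^{-1}$, and each step degrades the constant by a factor $\exp(h^{-C_6})$. The delicate point is the \emph{composition} of the logarithmic moduli: iterating $s\mapsto Cs\big(\log(Cs/\sigma)\big)^{-1/2}$ a total of $N$ times does not create an $N$-fold iterated logarithm but collapses back to a single $\log(\Lambda/\varepsilon_0)$, at the price of absorbing $N$ and the collar widths into the prefactor $C_5\exp(h^{-C_6})$; replacing $\varepsilon_0$ by $\|f\|_{L^2}+h^{-2}\varepsilon_0$ accounts for the inhomogeneous term $f$ and the scaling of the collar. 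Tracking this bookkeeping yields the stated inequality.

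\textbf{Main obstacle.} The principal difficulty is uniformity over $\mathcal M(n,D,K,v_0)$ with only $C^2$ control on $g$. First, one must verify that the Carleman estimate and the FBI-transform argument retain constants depending solely on this $C^2$-bound and on $\|q\|_{C^0}$ --- which is precisely why the hypotheses here exclude the parameter $K_2$. Second, the covering and chaining must be made geometrically uniform: the number of local regions, their radii, and the collar widths are controlled using only $D$, $K$, $v_0$ via Bishop--Gromov volume comparison and the Cheeger-type lower bound on the injectivity radius implied by $|\mathrm{Sec}(M)|\le K^2$ together with $\mathrm{Vol}(M)\ge v_0$. A secondary technical nuisance is handling the smooth boundary $\partial U$ and the time cutoffs near $t=\pm T$, where the pseudoconvex weights must still be chosen so that the reachable region extends down to the $\sqrt{h}$ margin.
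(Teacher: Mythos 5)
The paper does not prove Theorem~\ref{uc-Y}; it is quoted directly from \cite[Thm.~1.3]{LLY}, together with the remark (citing \cite[Thm.~1.2]{BKL16}) that adding the zeroth-order potential $q$ changes nothing except that $\|q\|_{C^0}$ enters the parameter list. So the relevant comparison is with the argument in \cite{LLY} and its precursors \cite{BKL18,BKL16,LL}, not with anything written in the present paper. Against that benchmark your sketch captures the standard architecture: a Tataru-type Carleman estimate for the partially analytic wave operator, local quantitative unique continuation across pseudoconvex surfaces, chaining of such local steps to sweep $\Omega(h)$, and the bookkeeping observation that iterating the logarithmic modulus does not create iterated logarithms --- the accumulated losses are absorbed into $\exp(h^{-C_6})$. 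You also correctly flag the two uniformity issues over $\M(n,D,K,v_0)$ (Carleman constants depending only on a $C^2$ bound of $g$ and $\|q\|_{C^0}$, and a geometry-uniform covering via Bishop--Gromov together with the Cheeger injectivity-radius bound), which is precisely the contribution of \cite{LLY} beyond the earlier quantitative results.

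Two attributions in your sketch are a bit loose, though not wrong at the level of an outline. The exponent $\tfrac12$ on the logarithm and the $\sqrt h$ margin in the definition of $\Omega(h)$ are not produced by the FBI transform per se; the exponent comes from optimizing the Carleman (and Gaussian) parameters, and the $\sqrt h$ margin from the geometry of the pseudoconvex slabs one chains through. The FBI transform's role is to reduce the time-analytic, space-$C^2$ wave operator to a setting where a spatial Carleman estimate applies. Relatedly, the weight $\psi(x,t)=d(x,x_0)^2-(t-t_0)^2$ you write down describes the physical pseudoconvex surfaces before the FBI transform, whereas the actual Carleman weight acts on the transformed variables. These are matters of emphasis in a sketch, not gaps in the argument.
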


To specify the dependence on geometric parameters in the course of proofs, we clarify our notations for classes of manifolds.
Denote by $\M(n,D,K,v_0)$ the class of connected closed smooth Riemannian manifolds of dimension $n\geq 2$ satisfying
\begin{equation} \label{def-class-C0}
{\rm diam}(M)\leq D, \quad |{\rm Sec}(M) |\leq K^2, \quad \Vol(M)\geq v_0>0.
\end{equation}
In addition, we denote by $\M(n,D,K,K_1,v_0)$ the class of Riemannian manifolds in $\M(n,D,K,v_0)$ satisfying additionally 
\begin{equation}
\|\nabla R(M) \| \leq K_1.
\end{equation}
For $(M,g) \in \M(n,D,K,v_0)$, the injectivity radius is uniformly bounded from below due to \cite{Cheeger},
\begin{equation} \label{inj}
\inj(M) \geq i_0=i_0(n,D,K,v_0).
\end{equation}

The space $\M(n,D,K,v_0)$ of closed Riemannian manifolds is pre-compact in the $C^{1,\alpha}$-topology \cite{Gromov-book, Peters}.
Namely, given any sequence $(M_i,g_i) \in \M(n,D,K,v_0)$, there exists a subsequence of $(M_i,g_i)$ converging to a Riemannian manifold in the $C^{1,\alpha}$-topology.
A sequence of Riemannian manifolds $(M_i,g_i)$ is said to converge in the $C^{1,\alpha}$-topology to a Riemannian manifold $(M,g)$, if for sufficiently large $i$, there exist diffeomorphisms $F_i : M\to M_i$ such that $F_i^* g_i$ converges to $g$ in the $C^{1,\alpha}$-topology, see e.g. \cite[Chap. 10.3]{Petersen}.
The limit space of such a sequence is a $C^{1,\alpha}$-Riemannian manifold and has two-sided curvature bounds in the sense of Alexandrov, see e.g. \cite[Prop. 10.7.1]{BBI} and \cite[Prop. 11.3]{Geo4}.

For $x,y\in M$, we use the notation $|xy|$ for the Riemannian distance $d_M(x,y)$ or simply $d(x,y)$.
Denote by $B(x,r)$ the open geodesic ball of radius $r$ centered at $x$.
We denote by $[xy]$ any minimizing geodesic from $x$ to $y$.
For $x,y,z\in M$, we denote by $\angle yxz$ the angle between the tangent vectors of $[xy]$ and $[xz]$ in the tangent space $T_x M$.
Note that the angle may not be uniquely defined, depending on the choice of minimizing geodesics $[xy]$ and $[xz]$ if multiple minimizing geodesics exist.
This configuration of two minimizing geodesics starting at the same point is referred as a hinge.

Let $(X,d_X)$ and $(Y,d_Y)$ be two compact metric spaces.
The dilatation of a Lipschitz map $f:X\to Y$ is defined by
\begin{equation}
{\rm dil}(f) :=\sup_{x,y\in X} \frac{d_Y(f(x),f(y))}{d_X(x,y)}.
\end{equation}
The Lipschitz distance (e.g. \cite[Sec. 7.2]{BBI}) between two metric spaces is defined by
\begin{equation} \label{def-Lip-distance}
d_L(X,Y) :=\inf_{f: X\to Y} \log \Big( \max \big\{{\rm dil}(f), {\rm dil}(f^{-1}) \big\}\Big),
\end{equation}
where the infimum is taken over all bi-Lipschitz homeomorphisms $f:X\to Y$.
If there are no bi-Lipschitz homeomorphism from $X$ to $Y$, then their Lipschitz distance is set to be $\infty$.
In general the Lipschitz distance defines a finer topology on compact metric spaces than the Gromov-Hausdorff distance; however, they define the same topology in the class $\M(n,D,K,v_0)$ due to \cite[Thm. 8.25]{Gromov-book} and \cite[Thm. 1]{Kat85}.

\section{Distance coordinates} \label{sec-coordinate}

In this section, we prove Proposition \ref{coordinate-Lip}, the bi-Lipschitz property of a distance coordinate, assuming the bound for the first covariant derivative of the curvature tensor.
This type of results was known in \cite{BKL,BILL} assuming bounds on the higher derivatives of the curvature tensor, where the proof used non-degeneracy of the differential of the exponential map in the distance coordinate, see \cite[Sec. 2.1]{KKL}, together with the inverse function theorem.
In the present paper, we use a different method based on Toponogov's theorem that does not rely on differential structure inspired by \cite{Ivanov1,Ivanov2}.


\begin{lemma} \label{lemma-rL}
Let $(M,g) \in \M(n,D,K,v_0)$ and $U\subset M$ be an open set containing a ball $B(p,r_0)$ of some radius $r_0>0$.
Then there exists a constant $\rL=\rL(n,D,K,v_0,r_0)>0$ such that the following holds for any $\rL$-net $\Gamma$ in $B(p,r_0/2)$.

Let $x,y\in M\setminus U$ satisfy $d(x,y)<{\rm inj}(M)/2$.
Then there exists a uniform constant $\theta_0=\theta_0(n,D,K,v_0,r_0,\rL)>0$ such that there exists a point $z\in \Gamma$ satisfying
$$\big|\angle z xy -\frac{\pi}{2} \big| \geq \theta_0,$$
for some choice of minimizing geodesic $[xz]$ defining the angle.
\end{lemma}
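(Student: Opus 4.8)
The plan is to use Toponogov's comparison theorem, which holds under the two-sided sectional curvature bound $|\mathrm{Sec}(M)|\le K^2$ (so in particular $\mathrm{Sec}(M)\ge -K^2$), in the form of the hinge version: for a hinge at $x$ with sides $[xy]$ and $[xz]$ of lengths $a=d(x,y)$ and $b=d(x,z)$ and angle $\alpha=\angle zxy$, one controls $d(y,z)$ from above and below by the corresponding quantity in the model space $\mathbb{M}^2_{\pm K^2}$. The key point is that on these short scales (all distances bounded by $D$, angle as the free parameter), the model-space side length $\tilde d(\alpha)$ is a strictly monotone function of $\alpha$ with a derivative that is bounded below in absolute value away from $\alpha=0,\pi$, uniformly in $a,b$ ranging over a compact set bounded away from $0$. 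Hence if $\angle zxy$ were within a small angle of $\pi/2$ for \emph{every} $z\in\Gamma$ and every choice of minimizing geodesic, then $d(y,z)$ would be pinched into a narrow interval around a specific value determined by $d(x,z)$, and I will show this forces a contradiction with $\Gamma$ being a net in a ball while $x,y$ are close together.

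First I would fix $r_L$ small: small enough that $r_L<\min\{r_0/4,\,\mathrm{inj}(M)/10\}$ (using the uniform injectivity radius bound \eqref{inj}), and small enough that any $r_L$-net $\Gamma$ of $B(p,r_0/2)$ contains at least, say, two points whose mutual distance is comparable to $r_0$; more importantly $\Gamma$ must be $r_0/8$-dense in $B(p,r_0/4)$ say. Next, given $x,y\in M\setminus U$ with $d(x,y)<\mathrm{inj}(M)/2$, note $d(x,p)\ge r_0$ and $d(y,p)\ge r_0$ since $p\in U$, $x,y\notin U$; combined with $\mathrm{diam}(M)\le D$ this pins $d(x,z)$ and $d(y,z)$ into the compact range $[r_0/2,\,D+1]$ for all $z\in\Gamma\subset B(p,r_0/2)$. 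Now suppose for contradiction that for some choice making things worst, $|\angle zxy-\pi/2|<\theta$ for all $z\in\Gamma$ and all admissible $[xz]$. I would then use Toponogov twice: from $\mathrm{Sec}\le K^2$ and the hinge at $x$ with angle near $\pi/2$, comparison with $\mathbb{M}^2_{K^2}$ gives $d(y,z)\ge \tilde d_{K^2}(d(x,y),d(x,z),\pi/2-\theta)$; from $\mathrm{Sec}\ge -K^2$ comparison with $\mathbb{M}^2_{-K^2}$ gives $d(y,z)\le \tilde d_{-K^2}(d(x,y),d(x,z),\pi/2+\theta)$. Since $d(x,y)$ is small, in either model the side opposite a near-right angle is, to leading order, $\sqrt{d(x,z)^2+d(x,y)^2}+O(\theta\, d(x,y)) = d(x,z) + O(d(x,y)^2/d(x,z)) + O(\theta\, d(x,y))$. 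Thus $|d(y,z)-d(x,z)|\le C(d(x,y)^2+\theta\,d(x,y))$ for all $z\in\Gamma$, i.e. the function $z\mapsto d(x,z)-d(y,z)$ is uniformly small on $\Gamma$. But now pick two net points: since $\Gamma$ is dense in $B(p,r_0/4)$ I can find $z_1,z_2\in\Gamma$ with $d(z_1,z_2)\ge r_0/8$ and such that the triangle $x z_1 z_2$ is "fat" — more precisely, by choosing $z_1,z_2$ roughly antipodal inside the ball as seen from the direction of $x$, the geodesics $[xz_1],[xz_2]$ make a definite angle, and then $d(x,z_1)-d(x,z_2)$ is comparable to $\pm d(z_1,z_2)\cos(\text{something bounded away from }\pi/2)$ only if $x$ is far; more robustly, one of $d(x,z_1)-d(y,z_1)$ or the triangle inequality gives that $z\mapsto d(x,z)$ cannot be within $o(1)$ of $z\mapsto d(y,z)$ on a whole $r_0/8$-net of a ball unless $d(x,y)=o(r_0)$ in a way incompatible with the quantitative bound — here I spell out: taking $z$ to be (an approximate) nearest net point to $x$ and to $y$ respectively, one deduces $d(x,y)\le$ small, and then iterating/using a second well-chosen $z$ yields the contradiction. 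Concretely I would instead argue: the map $z\mapsto \angle zxy$ cannot be constant $\equiv\pi/2$ (up to $\theta$) on an $r_L$-net of a ball of definite size, because this would mean $y$ lies (within tolerance) on the hyperplane through $x$ perpendicular to $[xy]$ as probed by all of $\Gamma$, forcing $d(x,z)\approx d(y,z)$ everywhere on $\Gamma$; choosing $z$ near the point of $B(p,r_0/2)$ closest to $x$ along $[xy]$ extended (which exists within the net up to $r_L$) makes $\angle zxy$ close to $0$ or $\pi$, not $\pi/2$, giving $\theta_0$ explicitly via the model-space monotonicity.

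The main obstacle, and where I would spend the most care, is handling the non-uniqueness of minimizing geodesics $[xz]$ and the resulting multivaluedness of $\angle zxy$ — the statement only needs \emph{some} choice with $|\angle zxy-\pi/2|\ge\theta_0$, so the contradiction hypothesis must be "for all choices it is within $\theta$ of $\pi/2$", which is what makes the pinching argument go through, but one must be careful that Toponogov's hinge comparison applies to whichever minimizing geodesic realizes the angle, and that the lower-bound direction (comparison with $\mathbb{M}^2_{-K^2}$) is the version valid for hinges — the classical Toponogov hinge theorem gives the $d(y,z)\le$ model bound from $\mathrm{Sec}\ge -K^2$, while the reverse inequality from $\mathrm{Sec}\le K^2$ needs the local version valid within the injectivity radius, which is exactly why the hypothesis $d(x,y)<\mathrm{inj}(M)/2$ is imposed and why $r_L$ is chosen below $\mathrm{inj}(M)/10$ so that the whole triangle $xyz$ — no wait, $z$ is far — so that at least the relevant comparison triangle with short side $d(x,y)$ fits. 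I would phrase the whole argument as a compactness/contradiction statement over the class $\mathcal M(n,D,K,v_0)$ if a direct quantitative bound proves cumbersome: if no uniform $\theta_0$ existed, extract $M_i$, points $x_i,y_i,$ and nets $\Gamma_i$ with $\sup_{z\in\Gamma_i}|\angle z x_i y_i-\pi/2|\to 0$; pass to a $C^{1,\alpha}$-limit (Gromov precompactness), where the limit space is Alexandrov with two-sided bounds, and derive that the limit $y$ satisfies $d(x,z)=d(y,z)$ for all $z$ in a dense subset of a ball, forcing $x=y$, contradicting that $x_i,y_i\notin U$ and the angle being defined (or simply that the configuration degenerates), which gives the existence of $\theta_0$. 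Either route works; I would present the compactness version for brevity and note the explicit Toponogov estimate as the quantitative alternative.
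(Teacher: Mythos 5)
Your compactness route matches the paper's strategy in outline (extract a violating sequence, pass to a $C^{1,\alpha}$ Gromov limit, use angle continuity for hinges in two-sided Alexandrov bounds), but the key step where you derive the contradiction is wrong. You claim the limiting condition $\angle z x_0 y_0 = \pi/2$ for a dense set of $z$ gives $d(x_0,z)=d(y_0,z)$, forcing $x_0=y_0$. That implication does not hold: $\angle z x_0 y_0 = \pi/2$ only says the initial direction of $[x_0z]$ is orthogonal to that of $[x_0y_0]$; it is a first-order statement at $x_0$ and says nothing about equality of the two distance functions at finite distance. (For instance in $\mathbb{R}^n$, $z$ lying on the hyperplane through $x$ normal to $[xy]$ gives $d(y,z)=\sqrt{d(x,z)^2+d(x,y)^2} \ne d(x,z)$.) You slip into this because your Toponogov reasoning silently assumes $d(x,y)$ is small — but the lemma only assumes $d(x,y)<\mathrm{inj}(M)/2$, which is $O(1)$; the lemma is later \emph{applied} in a small-distance regime, but it must hold as stated.

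The paper derives the contradiction by a genuinely different, measure-theoretic argument that your proposal misses. Once the limit angle is $\pi/2$ for all $z$ in a dense subset of $B(p_0,r_0/2)$, restrict to $\Omega := B(p_0,r_0/2)\setminus\mathrm{Cut}(x_0)$ where the minimizing geodesic from $x_0$ is unique (the cut locus has measure zero in the Alexandrov limit, \cite[Prop.\ 3.1]{OS}). Then every initial direction $\mathrm{dir}(x_0,\Omega)$ is perpendicular to a fixed vector, hence lies in an equatorial $S^{n-2}\subset S^{n-1}$, so $\mathcal{H}^{n-1}(\mathrm{dir}(x_0,\Omega))=0$. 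This contradicts Ivanov's quantitative lower bound $\mathcal{H}^{n-1}(\mathrm{dir}(x_0,\Omega))\gtrsim \mathrm{Vol}(\Omega)/\mathrm{diam}(\Omega)>0$ (\cite[Lemma 2.8]{Ivanov1}). This is the missing ingredient, and it is not recoverable from a pinching of $d(x,\cdot)-d(y,\cdot)$.

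Two further gaps worth flagging: (i) you never address the step that limits of minimizing geodesics $[x_kz_k]$ remain minimizing under $C^{1,\alpha}$ convergence — the paper gives an explicit length-comparison argument and it is needed before applying angle semicontinuity; (ii) the restriction to the complement of the cut locus is essential, since the angle identity is only stated for \emph{some} choice of $[x_0z_0]$, and uniqueness on $\Omega$ is what lets one pass to a set of directions to which the Hausdorff-measure lower bound applies. Without these, the compactness skeleton does not close.
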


\begin{proof}
We prove by contradiction.
Suppose the statement is not true: then for any $r_k\to 0$, there exists an $r_k$-net $\Gamma_k$ in $B_k(p_k,r_0/2)\subset M_k$ in a sequence of manifolds $M_k \in \M(n,D,K,v_0)$ such that the following holds.
Let $x_k,y_k\in M_k\setminus B_k(p_k,r_0)$ satisfy $d_{M_k}(x_k,y_k)<\inj(M_k)/2$. Then for any $\theta_k\to 0$ and any $z_k\in \Gamma_k$, we have
\begin{equation} \label{assumption-small-angle}
\big|\angle z_k x_k y_k -\frac{\pi}{2} \big| < \theta_k,
\end{equation}
for any choice of minimizing geodesic $[x_k z_k]$ defining the angle. Note that since $d_{M_k}(x_k,y_k)<\inj(M_k)/2$, the minimizing geodesic $[x_k y_k]$ is unique.

By the compactness theorem \cite{A90}, there exists a subsequence of $(M_k,g_k)$ converging to a Riemannian manifold $(M_0,g_0)$ in $C^{1,\alpha}$-topology. 
We write $x_0,y_0,z_0,p_0$ as the limit of $x_k,y_k,z_k,p_k$, respectively.
Under the convergence, the $r_k$-nets $\Gamma_k$ in $B_k(p_k,r_0/2)\subset M_k$ converge to a dense set $\Gamma_0$ in $B(p_0,r_0/2)\subset M_0$.
Moreover, the exponential map of $M_k$ converges uniformly to the exponential map of $M_0$ on compact subsets of the tangent space \cite[Thm. 4.4]{Peters}.
Fixing any choice of minimizing geodesics $[x_k z_k]$,
it follows that the minimizing geodesics $[x_k z_k]$ converge to a geodesic $\gamma_0$ in $M_0$ from $x_0$ to $z_0$, up to a subsequence.
The limit geodesic $\gamma_0$ is minimizing. This can be argued as follows.
Let $F_k: M_k\to M_0$ be the diffeomorphisms defining the $C^{1,\alpha}$-convergence.
Using $(F_k)_* g_k \to g_0$ uniformly,
then for any $\sigma>0$, there exists sufficiently large $k>N$ such that
\begin{eqnarray*}
{\rm Leng}_{g_0} (F_k([x_k z_k])) &\leq& {\rm Leng}_{(F_k)_* g_k} (F_k([x_k z_k]))+ \sigma \\
&=& {\rm Leng}_{g_k} ([x_k z_k])+ \sigma \\
&=& d_{M_k}(x_k, z_k)+\sigma \leq d_{M_0} (x_0,z_0)+2\sigma,
\end{eqnarray*}
where ${\rm Leng}_{g}(\gamma)$ denotes the length of curve $\gamma$ with respect to the Riemannian metric $g$.
Hence,
\begin{eqnarray*}
{\rm Leng}_{g_0}(\gamma_0) \leq \liminf_{k\to \infty} {\rm Leng}_{g_0} (F_k([x_k z_k])) \leq d_{M_0} (x_0,z_0)+2\sigma.
\end{eqnarray*}
Since $\sigma$ is arbitrary, we conclude that $\gamma_0$ is distance minimizing.

Hence, by \eqref{assumption-small-angle} and the continuity of angles, \cite[Thm. 8.41]{AKP} or Lemma \ref{lemma-angle-continuity}, in Riemannian manifolds with two-sided curvature bounds in the sense of Alexandrov, there is some minimizing geodesic $[x_0 z_0]$ for which
\begin{equation} 
 \angle z_0 x_0 y_0  = \lim_{k\to \infty} \angle z_k x_k y_k= \frac{\pi}{2}.
\end{equation}
This holds for all $z_0$ in the dense set $\Gamma_0$ of $B(p_0,r_0/2)\subset M_0$, and for some choice of $[x_0 z_0]$ (that is the limit of some $[x_k z_k]$).
Since $\Gamma_0$ is dense in $B(p_0,r_0/2)$, then for any $q\in B(p_0,r_0/2)$, there exists a sequence $z_{0,j}\in \Gamma_0$ converging to $q$ in $M_0$, so $[x_0 z_{0,j}]\to [x_0 q]$ up to subsequence, and hence the angle converges:
$\angle q x_0 y_0 =\pi/2$ for some choice of $[x_0 q]$.

We have shown that for any $q \in B(p_0,r_0/2)\subset M_0$, there exists some choice of minimizing geodesic $[x_0 q]$ for which
\begin{equation} \label{right-angle}
\angle q x_0 y_0 =\frac{\pi}{2}.
\end{equation}
Let us consider
$$\Omega:= B(p_0,r_0/2) \setminus {\rm Cut}(x_0),$$
where ${\rm Cut(x_0)}$ is the cut locus of $x_0$ in $M_0$.
Since there is only one minimizing geodesic joining $x_0$ and each point in $\Omega$, then \eqref{right-angle} yields 
\begin{equation} \label{direction-zero}
\mathcal{H}^{n-1}\big({\rm dir}(x_0,\Omega) \big)=0,
\end{equation}
where ${\rm dir}(x_0,\Omega)$ denotes the set of the initial directions (in the unit sphere of the tangent space at $x_0$) of all minimizing geodesics from $x_0$ to points of $\Omega$.
Since for Alexandrov spaces ${\rm Cut}(x_0)$ has $n$-dimensional Hausdorff measure zero \cite[Prop. 3.1]{OS}, using \cite[Lemma 2.8]{Ivanov1},
$$\mathcal{H}^{n-1}\big({\rm dir}(x_0,\Omega) \big) \geq C(n,D,K)\frac{{\Vol}(\Omega)}{{\rm diam}(\Omega)}= C(n,D,K) \frac{{\Vol}(B(p_0,r_0/2))}{{\rm diam}(\Omega)} >0.$$
This is a contradiction to \eqref{direction-zero}.
\end{proof}

\begin{lemma}[Angle continuity, Thm 8.41 in \cite{AKP}] \label{lemma-angle-continuity}
Let $X_i$ be a sequence of compact Alexandrov spaces with curvature bounded below by $\kappa$, and $X_i$ converge to $X$ in the Gromov-Hausdorff topology. Suppose that the limit space $X$ has two-sided curvature bounds in the sense of Alexandrov. 
If a sequence of hinges in $X_i$ converge to a hinge in $X$, then the angles of the hinges converge.
\end{lemma}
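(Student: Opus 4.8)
The plan is to split the claim into lower and upper semicontinuity of the angle and to handle the two directions by different comparison principles; this is Theorem 8.41 of \cite{AKP}, and I sketch the argument. Let $\gamma_{1,i},\gamma_{2,i}$ be the unit-speed minimizing geodesics from $p_i\in X_i$ forming the $i$-th hinge, converging to the unit-speed geodesics $\gamma_1,\gamma_2$ from $p\in X$ forming the limiting hinge; write $\alpha_i=\angle(\gamma_{1,i},\gamma_{2,i})$, $\alpha=\angle(\gamma_1,\gamma_2)$, and let $\kappa'$ be an Alexandrov upper curvature bound for $X$. First I would record the convergence of comparison angles at a fixed scale: for small fixed $s,t>0$ put $a_i=\gamma_{1,i}(s)$, $b_i=\gamma_{2,i}(t)$, $a=\gamma_1(s)$, $b=\gamma_2(t)$; convergence of the hinges gives $d(p_i,a_i)=s$, $d(p_i,b_i)=t$ and $d(a_i,b_i)\to d(a,b)$, and since the comparison angle $\tilde\angle_\mu$ at the apex of a triangle in the model plane of curvature $\mu$ is a continuous function of its three side lengths on nondegenerate triangles, it follows that $\tilde\angle_\mu(a_ip_ib_i)\to\tilde\angle_\mu(apb)$ for each $\mu$ once $s,t$ are small.

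For lower semicontinuity I would invoke the monotonicity of comparison angles in spaces of curvature $\ge\kappa$: the angle of a hinge is the supremum over its comparison angles, so $\alpha_i\ge\tilde\angle_\kappa(a_ip_ib_i)$ for all admissible $s,t$. Passing to the limit with the previous paragraph gives $\liminf_i\alpha_i\ge\tilde\angle_\kappa(apb)$; as curvature lower bounds pass to Gromov-Hausdorff limits, $X$ also has curvature $\ge\kappa$, and letting $s,t\to0$ yields $\liminf_i\alpha_i\ge\alpha$.

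The hard part is the reverse inequality $\limsup_i\alpha_i\le\alpha$, and this is exactly where the two-sided bound on the limit enters: a lower curvature bound alone only exhibits the angle as a supremum of comparison angles, and angles are in general not upper semicontinuous under convergence of spaces with merely a lower bound, since a near-conical point may be smoothed out in the limit. On $X$ the upper bound $\kappa'$ provides the complementary identity $\alpha=\inf_{s,t}\tilde\angle_{\kappa'}(apb)$. In the situation relevant to Lemma \ref{lemma-rL} the spaces $X_i$ are Riemannian manifolds with $|{\rm Sec}|\le K^2$, hence also satisfy the Alexandrov upper bound $\kappa'=K^2$, so $\alpha_i\le\tilde\angle_{\kappa'}(a_ip_ib_i)$ for all admissible $s,t$; combining with the convergence of comparison angles, taking $s,t$ small, and then taking the infimum over $s,t$ gives $\limsup_i\alpha_i\le\alpha$, which together with the previous paragraph proves $\alpha_i\to\alpha$. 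For the general Alexandrov statement, in which no upper bound on the $X_i$ is assumed, the upper semicontinuity instead goes through the space of directions at the limit point and the non-branching of geodesics in the limit space, for which I would refer to \cite[Thm.\ 8.41]{AKP}.
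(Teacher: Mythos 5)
Your argument is correct but takes a genuinely different route from the paper's, and it only establishes a special case of the lemma as stated. The paper's proof is essentially a reduction to \cite[Thm.~8.41]{AKP}: the only added content is the observation that under two-sided curvature bounds the straightness relation is symmetric ($q\in\mathrm{Str}(p)$ implies $p\in\mathrm{Str}(q)$, cf.\ \cite[Prop.~3.1]{Ivanov1}), which is what lets one verify the hypotheses of AKP's theorem for an arbitrary limit hinge. You instead argue directly via comparison angles: the lower bound gives $\alpha=\sup_{s,t}\tilde\angle_\kappa$, hence lower semicontinuity after passing comparison angles to the limit, and an upper bound gives $\alpha=\inf_{s,t}\tilde\angle_{\kappa'}$, hence upper semicontinuity. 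This is a clean and more self-contained argument, and it is correct, \emph{provided} the approximating spaces $X_i$ themselves carry the CAT($\kappa'$) upper bound locally --- something the lemma does \emph{not} assume (it only requires $X_i$ to have curvature $\geq\kappa$; only the limit $X$ has two-sided bounds). You correctly flag this and note that in the paper's application (Lemma~\ref{lemma-rL}) the $X_i$ are Riemannian with $|\mathrm{Sec}|\le K^2$, so the two-sided comparison does apply; for that use the uniform injectivity radius bound \eqref{inj} also guarantees that $s,t$ can be chosen uniformly small, which is needed for the local CAT inequality. But for the lemma in the generality in which it is stated, your upper-semicontinuity step does not go through, and your fallback is to refer back to AKP, which is circular as a proof of the statement itself. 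So: what you have is a valid, more elementary proof of the Riemannian-input case actually used in the paper, at the cost of not proving the stated Alexandrov-space generality; the paper's route via straightness symmetry and \cite[Thm.~8.41]{AKP} covers the general statement but is less self-contained.
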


\begin{proof}
This is due to \cite[Thm. 8.41]{AKP}, using the fact that $q\in {\rm Str}(p)$ implies $p\in {\rm Str}(q)$ under bounded curvature in the sense of Alexandrov, see e.g. \cite[Prop. 3.1]{Ivanov1}.
Recall the definition \cite[Def. 8.10]{AKP} that a point $q$ is called $p$-straight, denoted by $q\in {\rm Str}(p)$, if 
$$\limsup_{r\to q} \frac{d(p,r)-d(p,q)}{d(q,r)}=1.$$
In smooth Riemannian manifolds, this corresponds to the fact that the cut point relation is symmetric: $p\in {\rm Cut}(q)$ implies $q\in {\rm Cut}(p)$.
\end{proof}

\begin{figure}[h]
  \begin{center}
    \includegraphics[width=0.55\linewidth]{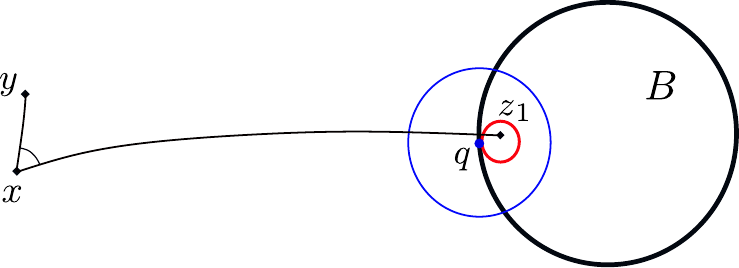}
    \caption{Setting of Lemma \ref{coordinate-local-Lip}. The point $q$ is a nearest point in the closure of $B=B(p,r_0/2)$ from $x$. The blue ball $B(q,\rho_0)$ does not intersect with the cut locus of $x$ for a constant $\rho_0$ determined in Lemma \ref{uniform-cut}. The red ball has radius $\rho_0/4$ centered at some point $q'$. If $\Gamma$ is any $r_L$-net in the red ball with $r_L$ determined in Lemma \ref{lemma-rL}, then there exists a point $z_1\in \Gamma$ such that $|\angle z_1 x y - \pi/2 | \geq \theta_0$.}
    \label{fig_coordinate}
  \end{center}
\end{figure}

To proceed further, we need to identify a subset in $B(p,r_0/2)$ that does not intersect with the cut locus of $x$.
A typical way is to consider a small neighbourhood of a nearest point $q$ in the closure of $B(p,r_0/2)$ from $x$, see e.g. \cite[Sec. 2.1]{KKL}.
Since one can always extend the minimizing geodesic from $x$ to $q$ so that it stays minimizing, $q$ is not a cut point of $x$.
Then there exists a small neighborhood of $q$ that does not contain cut points of $x$.
The following lemma states that there is a uniform choice of radius for the small neighborhood, assuming the bound on the first covariant derivative of the curvature tensor.
It is unclear to us whether this is possible without the additional curvature bound, see \cite[Sec. 2]{Albano} on the need of $C^2$-convergence.

\begin{lemma} \label{uniform-cut}
Let $(M,g) \in \M(n,D,K,K_1,v_0)$ and $U\subset M$ be an open set containing a ball $B(p,r_0)$ of some radius $0<r_0<{\rm inj}(M)/2$. Let $x\in M\setminus U$ be fixed, and
let $q$ be any nearest point in the closure of $B(p,r_0/2)$ from $x$.
Then there exists a constant $\rho_0=\rho_0(n,D,K,K_1,v_0,r_0)$ such that $B(q,\rho_0)$ does not contain any cut point of $x$.
\end{lemma}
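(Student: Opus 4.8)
The plan is to prove Lemma \ref{uniform-cut} by a compactness--contradiction argument, in the same spirit as the proof of Lemma \ref{lemma-rL}. Suppose the statement fails: then there is a sequence $(M_k,g_k)\in\M(n,D,K,K_1,v_0)$, balls $B(p_k,r_0)\subset U_k$, points $x_k\in M_k\setminus U_k$, nearest points $q_k$ in the closure of $B(p_k,r_0/2)$ from $x_k$, and radii $\rho_k\to 0$ such that $B(q_k,\rho_k)$ contains a cut point $c_k$ of $x_k$, so $d(q_k,c_k)<\rho_k\to 0$. The point of including the extra bound $\|\nabla R(M)\|\leq K_1$ is that the compactness theorem now gives $C^{2,\alpha}$-precompactness (by \cite{A90} / \cite{Peters}), so a subsequence of $(M_k,g_k)$ converges in the $C^{2,\alpha}$-topology to a $C^{2,\alpha}$-Riemannian manifold $(M_0,g_0)$, and correspondingly $x_k\to x_0$, $q_k\to q_0$, $c_k\to c_0$ with $q_0=c_0$ (since $d(q_k,c_k)\to 0$). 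Note $q_0$ is a nearest point in the closure of $B(p_0,r_0/2)$ from $x_0$, so as explained before the lemma, the minimizing geodesic $[x_0 q_0]$ extends beyond $q_0$ while remaining minimizing; in particular $q_0\notin{\rm Cut}(x_0)$.

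Next I would derive a contradiction by showing that the cut points $c_k$ cannot converge to a non-cut point. There are two standard mechanisms forcing a point to be a cut point of $x_k$: either there are (at least) two distinct minimizing geodesics from $x_k$ to $c_k$, or $c_k$ is the first conjugate point of $x_k$ along $[x_k c_k]$. In the first case, passing to the limit as in Lemma \ref{lemma-rL} (using uniform convergence of the exponential maps on compact subsets of the tangent bundle, \cite[Thm. 4.4]{Peters}, and lower semicontinuity of length), the two minimizing geodesics $[x_k c_k]$ converge to two minimizing geodesics from $x_0$ to $q_0$; if they stay distinct this already contradicts $q_0\notin{\rm Cut}(x_0)$, and if they merge in the limit one instead extracts a Jacobi field degenerating along $[x_0 q_0]$, i.e. $q_0$ is conjugate to $x_0$, again contradicting $q_0\notin{\rm Cut}(x_0)$. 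In the second (conjugate point) case, $C^{2,\alpha}$-convergence of the metrics gives convergence of the curvature tensors, hence of the Jacobi equation along $[x_k c_k]$ to the Jacobi equation along $[x_0 q_0]$; a nontrivial Jacobi field vanishing at $x_k$ and $c_k$ converges to a nontrivial Jacobi field vanishing at $x_0$ and $q_0$ (nontriviality is preserved because one can normalize the derivative at $x_k$), so $q_0$ is conjugate to $x_0$ along $[x_0 q_0]$ — contradiction. A clean way to package both cases is to use the characterization $d(x_k,{\rm Cut}(x_k))=:s(x_k)$ is a continuous function of $x_k$ and varies continuously under $C^{2,\alpha}$-convergence (the ``cut distance'' is continuous on the unit tangent bundle, e.g. \cite[Chap. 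III]{doCarmo} / Klingenberg), so $d(q_k,{\rm Cut}(x_k))\to d(q_0,{\rm Cut}(x_0))=:\rho_0'>0$ since $q_0\notin{\rm Cut}(x_0)$ and the cut locus is closed; but $d(q_k,{\rm Cut}(x_k))\leq d(q_k,c_k)<\rho_k\to 0$, a contradiction. Then one takes $\rho_0$ to be (half of) this uniform positive bound.

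The main obstacle is precisely justifying that the distance from $q_k$ to the cut locus of $x_k$ does not degenerate in the limit, i.e. that "being a cut point" is a closed condition compatible with $C^{2,\alpha}$-convergence, and — more importantly for the uniformity claim — that the bound $\rho_0$ obtained this way depends only on $n,D,K,K_1,v_0,r_0$ and not on the individual manifold. This is where $C^2$-control of the metric is essential: without a bound on $\nabla R$ one only has $C^{1,\alpha}$-precompactness, the limit metric is merely $C^{1,\alpha}$, the Jacobi equation need not pass to the limit, and conjugate points can escape in the limit; this is the phenomenon alluded to in \cite[Sec. 2]{Albano}. Concretely, I would: (i) show the conjugate radius along any minimizing geodesic is uniformly bounded below on the class $\M(n,D,K,K_1,v_0)$ (from a two-sided curvature bound and Jacobi field comparison, no derivative of curvature is even needed for this, but $C^{2,\alpha}$-convergence is needed to know the limit is still in the class and to run the contradiction cleanly); (ii) rule out the "short loop of minimizing geodesics at $q_0$" scenario using that $q_0$ lies at distance $\geq r_0/2 - \rho_k$ from the center $p_k$ but within $r_0$, combined with $q_0$ being a nearest point (hence $[x_0 q_0]$ meets $\partial B(p_0,r_0/2)$ orthogonally and extends), which forces a definite "reserve" of minimizing length past $q_0$; (iii) conclude by the cut-distance continuity argument above. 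I expect steps (i) and (iii) to be routine given the hypotheses, and the careful bookkeeping that the final $\rho_0$ is uniform — i.e. that the contradiction argument yields a quantitative bound — to be the delicate point, handled automatically by the compactness framework.
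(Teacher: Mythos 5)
Your proof is correct and takes essentially the same compactness--contradiction route as the paper: extract a $C^{2,\alpha}$-convergent subsequence using the bound on $\nabla R$, pass to limit points $x_0,q_0$ with the cut points $c_k\to q_0$, observe $q_0\notin{\rm Cut}(x_0)$ because $q_0$ is a nearest point of the closed ball, and derive the contradiction from the stability of the cut locus under $C^2$-convergence. The only difference is one of packaging: where you supply a self-contained sketch of that stability (splitting into the multiple-geodesic and conjugate-point cases, or equivalently continuity of the cut distance), the paper simply cites \cite{BIP,Albano} for the fact that cut points converge to cut points under $C^2$-convergence of metrics.
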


\begin{proof}
The proof is a standard compactness argument.
Suppose the claim is not true: then for $x_i\in M_i\setminus B_i(p_i,r_0)$ in a sequence of manifolds $M_i\in \M(n,D,K,K_1,v_0)$ and some nearest point $q_i$ in the closure of $B_i(p_i,r_0/2)$ from $x_i$, there exists a sequence of points $\xi_i \in M_i$ satisfying $d_{M_i}(\xi_i, q_i)\to 0$ such that $\xi_i\in {\rm Cut}(x_i)$.
By compactness \cite{A90,HH}, there is a subsequence of $M_i$ converging to $M_0$ in $C^{2,\alpha}$-topology, under the additional bound for the covariant derivative of the curvature tensor. 
Let $x_0,\xi_0,p_0$ be the limit of $x_i,\xi_i,p_i$ under the convergence, up to subsequence.
Take any minimizing geodesics $[x_i \xi_i]$, and there is a subsequence of $[x_i \xi_i]$ converging to a minimizing geodesic $[x_0 \xi_0]$.
By \cite{BIP, Albano}, the cut points converge to a cut point under $C^2$-convergence of metrics, so $\xi_0\in{\rm Cut}(x_0)$. However, as $\xi_0=q_0=\lim\limits_{i\to \infty} q_i$, and $q_0$ is a nearest point in the closure of $B(p_0,r_0/2)\subset M_0$ from $x_0$, then $q_0\notin {\rm Cut}(x_0)$ because one can always extend $[x_0 q_0]$ to a longer minimizing geodesic. 
\end{proof}

Now we prove the distance coordinate is locally bi-Lipschitz.

\begin{lemma} \label{coordinate-local-Lip}
Let $(M,g) \in \M(n,D,K,K_1,v_0)$ and $U\subset M$ be an open set containing a ball $B(p,r_0)$ of some radius $0<r_0<{\rm inj}(M)/2$. Then there exist uniform constants $\cLip$ and $r_L$ such that the following holds.

Let $\{z_i\}_{i=1}^L$ be any maximal $r_L$-separated set in $B(p,r_0/2)$.
Define the distance coordinate $\Phi: M\to \mathbb{R}^L$ with respect to the set by
\begin{equation*} \label{def-distance-coordinate}
\Phi_L(x) :=\big(d(x,z_1),\cdots, d(x,z_L) \big).
\end{equation*}
Let $x,y\in M\setminus U$ be arbitrary satisfying $d(x,y)< \inj(M)/2$.
If $d(x,y)<\cLip$, then 
$$C^{-1} \leq \frac{|\Phi_L(x) -\Phi_L(y)|}{d(x,y)} \leq C,$$
where the bi-Lipschitz constant $C$ is uniform in the class of manifolds.
\end{lemma}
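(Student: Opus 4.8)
The plan is to prove the two inequalities separately. The upper bound is immediate: each coordinate $d(\cdot,z_i)$ is $1$-Lipschitz, so
$$|\Phi_L(x)-\Phi_L(y)|^2=\sum_{i=1}^L|d(x,z_i)-d(y,z_i)|^2\le L\,d(x,y)^2,$$
and since the maximal $\rL$-separated set $\{z_i\}\subset B(p,r_0/2)$ has the disjoint balls $B(z_i,\rL/2)$ contained in $B(p,r_0)$, standard volume comparison in $\M(n,D,K,v_0)$ bounds $L\le L_0(n,D,K,r_0,\rL)$ uniformly; thus the upper bound holds with $C=\sqrt{L_0}$.

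For the lower bound it suffices to produce a single index $i_0$ with $|d(x,z_{i_0})-d(y,z_{i_0})|\ge c\,d(x,y)$ for a uniform $c>0$. First I would localize as in Figure~\ref{fig_coordinate}: let $q$ be a nearest point of $\overline{B(p,r_0/2)}$ to $x$, so by Lemma~\ref{uniform-cut} the ball $B(q,\rho_0)$ is disjoint from ${\rm Cut}(x)$; sliding $q$ inward by $\rho_0/4$ along $[qp]$ to a point $q'$ gives the ``red ball'' $B(q',\rho_0/4)\subset B(q,\rho_0)\cap B(p,r_0/2)$. Being a maximal $\rL$-separated set in $B(p,r_0/2)$, $\{z_i\}$ restricts to an $\rL$-net of the red ball (after enlarging the radius by $\rL\le\rho_0/4$, the relevant net points still lie in $B(q,\rho_0)$). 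Running the compactness argument of Lemma~\ref{lemma-rL} with $B(p,r_0/2)$ replaced by this red ball --- whose volume is uniformly positive and which sits at uniform distance from $x$ and $y$ --- yields a point $z_{i_0}\in\{z_i\}\cap B(q,\rho_0)$ together with a minimizing geodesic $[xz_{i_0}]$ such that $\big|\angle z_{i_0}xy-\tfrac\pi2\big|\ge\theta_0$. In particular $z_{i_0}\notin{\rm Cut}(x)$, and $|\cos\angle z_{i_0}xy|\ge\sin\theta_0$.

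Next I would run a quantitative first-variation estimate along the unique minimizing geodesic $\gamma:[0,a]\to M$ from $x$ to $y$ (unit speed, $a=d(x,y)<\cLip$), for the function $h(s):=d(\gamma(s),z_{i_0})$. A quantitative refinement of Lemma~\ref{uniform-cut} --- proved by the same compactness argument, using $C^2$-convergence (where the bound $\|\nabla R\|\le K_1$ enters) and the stability of cut loci under $C^2$-convergence --- gives a uniform $\eta_0>0$ with $d(x,{\rm Cut}(z_{i_0}))\ge\eta_0$. Hence if $\cLip<\eta_0/2$ every point of $\gamma$ stays at distance $\ge\eta_0/2$ from ${\rm Cut}(z_{i_0})$, so $h$ is smooth on $[0,a]$ with $h'(0)=-\cos\angle z_{i_0}xy$ (the angle taken with the now-unique minimizing geodesic $[xz_{i_0}]$, so that $|h'(0)|\ge\sin\theta_0$) and $h''(s)={\rm Hess}_{\gamma(s)}\,d(\cdot,z_{i_0})\big(\gamma'(s),\gamma'(s)\big)$. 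The transverse Hessian of a distance function obeys the Riccati two-sided bound $K\cot(Kr)\le{\rm Hess}\,d\le K\coth(Kr)$: since $d(\cdot,z_{i_0})$ is bounded and bounded away from $0$ along $\gamma$, the right-hand inequality bounds $h''$ from above, and for the lower bound one observes that the minimizing geodesic from $z_{i_0}$ to $\gamma(s)$ extends minimizing by at least $\eta_0/2$ (as $\gamma(s)$ is $\ge\eta_0/2$-far from ${\rm Cut}(z_{i_0})$), hence $\gamma(s)$ precedes the first conjugate point along it by at least $\eta_0/2$, and a quantitative Riccati estimate turns this definite gap into a lower bound on $h''$. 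Thus $|h''|\le C_*$ uniformly, whence
$$|h(a)-h(0)|\ \ge\ |h'(0)|\,a-\tfrac12C_*a^2\ \ge\ \sin\theta_0\cdot a-\tfrac12C_*a^2\ \ge\ \tfrac12\sin\theta_0\cdot a$$
once $\cLip\le\sin\theta_0/C_*$. Since $|\Phi_L(x)-\Phi_L(y)|\ge|d(x,z_{i_0})-d(y,z_{i_0})|=|h(a)-h(0)|$, the lemma follows with $C=\max(\sqrt{L_0},2/\sin\theta_0)$ and $\cLip$ chosen small enough (uniformly), e.g. $\cLip=\min(\eta_0/2,\sin\theta_0/C_*)$.

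The step I expect to be the main obstacle is the uniform \emph{lower} bound on ${\rm Hess}\,d(\cdot,z_{i_0})$ along $\gamma$: the elementary comparison term $K\cot(Kr)$ degenerates and becomes vacuous once $r\ge\pi/K$, so one genuinely needs the uniform gap $d(x,{\rm Cut}(z_{i_0}))\ge\eta_0$ --- this is precisely where the bound $\|\nabla R\|\le K_1$ is used, through $C^2$-convergence and the stability of cut loci --- together with the Riccati argument converting that gap into a two-sided Hessian bound. A secondary point is the bookkeeping ensuring that the fixed maximal $\rL$-separated set $\{z_i\}$ supplies a genuine $\rL$-net inside the red ball, so that Lemma~\ref{lemma-rL} delivers $z_{i_0}$ simultaneously meeting the angle condition and lying off ${\rm Cut}(x)$.
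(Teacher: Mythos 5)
Your argument follows the paper's geometry exactly up to the selection of $z_{i_0}$: the upper bound via $L$, the nearest point $q$, Lemma~\ref{uniform-cut} for $\rho_0$, the red ball $B(q',\rho_0/4)$, and Lemma~\ref{lemma-rL} for the angle condition $|\cos\angle z_{i_0}xy|\ge\sin\theta_0$ are all as in the paper. The divergence is in the final quantitative step. The paper cites Corollary~3.2 of \cite{Ivanov1}, a Toponogov-type inequality
\[
\big|\,|z_{i_0}x|-|z_{i_0}y|-|xy|\cos\angle z_{i_0}xy\,\big|\le C\,|xy|^2,
\]
whose only hypothesis beyond the curvature bounds is that $[xz_{i_0}]$ extends minimizing beyond $z_{i_0}$ by a fixed length (here $\rho_0/10$), and this is already supplied by Lemma~\ref{uniform-cut} with no further work. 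You instead derive the same second-order cancellation by Taylor-expanding $h(s)=d(\gamma(s),z_{i_0})$ along the geodesic from $x$ to $y$ using the Hessian of the distance function. That route is valid in $\M(n,D,K,K_1,v_0)$ but has a real extra cost: to make $h$ smooth on $[0,a]$ and to bound $h''$ from below, you need a \emph{new} uniform estimate $d(x,{\rm Cut}(z_{i_0}))\ge\eta_0$. This is not a ``refinement'' one gets for free from Lemma~\ref{uniform-cut} (which controls $\mathrm{Cut}(x)$ near $q$, not $\mathrm{Cut}(z_{i_0})$ near $x$); it is a separate lemma. Your compactness sketch for it (contradiction plus $C^2$-stability of cut loci \cite{BIP,Albano}, using $x\notin\mathrm{Cut}(z_{i_0})$ in the limit by symmetry of the cut relation) is plausible and I believe it works, but it should be written out, with attention to the fact that $q$, $q'$, and hence the admissible $z$ all depend on $x$. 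Likewise the claim that a uniform gap $\eta_0/2$ to the first conjugate point forces $h''\ge -\max\{2K,\,8/(3\eta_0)\}$ is a correct Riccati comparison ($u'\le -u^2+K^2$ cannot stay below $-2K$ for time $\gtrsim 1/|u|$), but it is asserted rather than shown.

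In short: your proof takes a genuinely different route for the lower bound, trading a citation of a Toponogov-type corollary for a self-contained smooth Hessian/Riccati argument, at the price of one additional uniform cut-locus lemma. Both approaches work here; the paper's choice is deliberately metric (not relying on differential structure), which the authors flag as important for lower-regularity extensions, whereas your version is intrinsically a smooth Riemannian computation. Your self-assessment that the Hessian lower bound is the main obstacle is accurate.
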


\begin{proof}
We take $q$ to be a nearest point in the closure of $B(p,r_0/2)$ from $x$.
By Lemma \ref{uniform-cut}, there exists $\rho_0>0$ such that $B(q,\rho_0)\cap{\rm Cut}(x)=\emptyset$.
There is a ball $B(q', \rho_0/4)$ centered at some point $q'$ of radius $\rho_0/4$ contained in $B(q,\rho_0)\cap B(p,r_0/2)$, see Figure \ref{fig_coordinate}.
Then we apply Lemma \ref{lemma-rL} to the ball $B(q',\rho_0/4)$ and determine the constant $r_L$, say $r_L<\rho_0/20$.
Let $\Gamma=\{z_i\}_{i=1}^L$ be any maximal $r_L$-separated set (hence an $r_L$-net) in $B=B(p,r_0/2)$, so the total number of points in the set is uniformly bounded:
\begin{equation} \label{bound-L}
L\leq C(n,D,K) r_L^{-n}.
\end{equation}
It follows that the $r_L$-neighborhood of $\Gamma\cap B(q', \rho_0/8)$ covers $B(q', \rho_0/8-r_L)$.
Hence by Lemma \ref{lemma-rL}, there exists a uniform constant $\theta_0>0$ and a point, say $z_1\in \Gamma\cap B(q',\rho_0/8)$,
such that $|\angle z_1 x y -\frac{\pi}{2}|\geq \theta_0$ for some choice of minimizing geodesic $[x z_1]$ defining the angle, that is,
\begin{equation} \label{bound-theta}
|\cos \angle z_1 x y| \geq \sin\theta_0>0.
\end{equation}
Since $B(q',\rho_0/4)\cap {\rm Cut}(x)=\emptyset$, 
any (unique) minimizing geodesic from $x$ to points in $B(q',\rho_0/8)$ can be extended to a longer minimizing geodesic by at least length $\rho_0/10$.
Then we apply Corollary 3.2 in \cite{Ivanov1},
\begin{equation}
\Big| |z_1 x| - |z_1 y| -|xy| \cos\angle z_1 x y \Big| \leq C(n,D,K,v_0,r_0,\rho_0) |xy|^2.
\end{equation}
Then by \eqref{bound-theta},
$$\big| |z_1 x| - |z_1 y| \big| \geq |xy| \sin\theta_0 -C(n,D,K,v_0,r_0,\rho_0)|xy|^2.$$
When $|xy|$ is small depending on $n,D,K,v_0,\theta_0,r_0,\rho_0$, we see that
\begin{equation}
\big| |z_1 x| - |z_1 y| \big| \geq C(\theta_0)|xy|.
\end{equation}
Hence,
$$|\Phi_L(x)-\Phi_L(y)| \geq \big| |z_1 x| - |z_1 y| \big| \geq C(\theta_0) |xy|.$$
On the other hand, by triangle inequality,
$$|\Phi_L(x)-\Phi_L(y)| \leq \sqrt{L}|xy|,$$
where $L$ is uniformly bounded by \eqref{bound-L}.
\end{proof}

Using Lemma \ref{coordinate-local-Lip} and a standard compactness argument, one can show that the distance coordinate is globally bi-Lipschitz.

\begin{proposition} \label{coordinate-Lip}
Let $(M,g) \in \M(n,D,K,K_1,v_0)$ and $U\subset M$ be an open set containing a ball $B(p,r_0)$ of some radius $0<r_0<{\rm inj}(M)/2$. Then there exists a uniform constant $r_L>0$ such that the following holds.

Let $\{z_i\}_{i=1}^L$ be any maximal $r_L$-separated set in $B(p,r_0/2)$.
Then the distance coordinate $\Phi: M\to \mathbb{R}^L$ defined by
\begin{equation} \label{def-distance-coordinate}
\Phi_L(x) :=\big(d(x,z_1),\cdots, d(x,z_L) \big) 
\end{equation}
is bi-Lipschitz on $M\setminus U$. The bi-Lipschitz constant and $r_L$ depend on $n,D,K,K_1,v_0,r_0$.
\end{proposition}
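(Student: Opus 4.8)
The plan is to upgrade the local bi-Lipschitz estimate of Lemma~\ref{coordinate-local-Lip} to a global one on $M\setminus U$ by a standard compactness/connectedness argument. Fix the constant $r_L$ from Lemma~\ref{coordinate-local-Lip} (and correspondingly $\cLip$), and fix any maximal $r_L$-separated set $\{z_i\}_{i=1}^L$ in $B(p,r_0/2)$. The upper bound is immediate and global: by the triangle inequality $|d(x,z_i)-d(y,z_i)|\leq d(x,y)$ for every $i$, so $|\Phi_L(x)-\Phi_L(y)|\leq \sqrt{L}\,d(x,y)$ with $L$ uniformly bounded by \eqref{bound-L}. So the whole issue is the lower bound $|\Phi_L(x)-\Phi_L(y)|\geq C^{-1} d(x,y)$ for \emph{all} $x,y\in M\setminus U$, not merely those with $d(x,y)$ small.

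First I would observe that Lemma~\ref{coordinate-local-Lip} already gives the lower bound whenever $d(x,y)<\min\{\cLip,\inj(M)/2\}=:r_1$, a uniform constant. To remove this smallness restriction I argue by contradiction in the usual way: if no uniform global lower bound exists, there are manifolds $M_k\in\M(n,D,K,K_1,v_0)$, maximal $r_L$-separated sets $\Gamma_k\subset B(p_k,r_0/2)$, and points $x_k,y_k\in M_k\setminus U_k$ with
$$|\Phi_{L,k}(x_k)-\Phi_{L,k}(y_k)| < \tfrac1k\, d_{M_k}(x_k,y_k).$$
Since $d_{M_k}(x_k,y_k)\geq r_1$ (otherwise Lemma~\ref{coordinate-local-Lip} applies), and diameters are bounded by $D$, we have $d_{M_k}(x_k,y_k)\in[r_1,D]$, so the right side is bounded while $|\Phi_{L,k}(x_k)-\Phi_{L,k}(y_k)|\to 0$. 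Pass to a subsequence so that $M_k\to M_0$ in $C^{1,\alpha}$ (compactness \cite{A90}), $x_k\to x_0$, $y_k\to y_0$, $\Gamma_k\to\Gamma_0$ with $\Gamma_0$ an $r_L$-net (in fact the limit of a maximal $r_L$-separated set is $r_L$-separated and $r_L$-dense). Distances converge, so $d_{M_0}(x_0,y_0)\geq r_1>0$, hence $x_0\neq y_0$, yet $d(x_0,z)=d(y_0,z)$ for every $z\in\Gamma_0$.

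The contradiction is reached by noting that $\Phi_{L}$ \emph{separates points of $M_0\setminus U_0$}: this is exactly where the local bi-Lipschitz bound, now valid on the limit space $M_0$ (which has two-sided curvature bounds in the Alexandrov sense and for which the same Lemmas apply — or one may simply note that $\Phi_L$ is injective on balls of radius $r_1$ and use a chaining argument), feeds in. Concretely, cover a minimizing geodesic $[x_0 y_0]$ by finitely many points $x_0=w_0,w_1,\dots,w_N=y_0$ with $d(w_{j-1},w_j)<r_1$; applying the local lower bound to each consecutive pair shows $\Phi_L(w_{j-1})\neq\Phi_L(w_j)$, but more importantly a telescoping/propagation-of-injectivity argument along the geodesic shows $\Phi_L(x_0)\neq\Phi_L(y_0)$ — one uses that the $z_i$ distinguish directions so that two distinct points at distance $\geq r_1$ cannot have all $L$ distance-coordinates equal. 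The cleanest route is: $\Phi_L$ is a continuous injection on $M_0\setminus U_0$ (injectivity being the global statement we are trying to prove for the limit, established directly since on $M_0$ the distance functions are genuine distance functions and any two points sharing all coordinates to a dense-enough net must coincide — indeed if $d(x_0,z)=d(y_0,z)$ for a point $z$ together with all points near it, differentiating shows the initial geodesic directions from $x_0$ and $y_0$ agree, forcing $x_0=y_0$ by the non-cut-locus argument of Lemmas~\ref{uniform-cut}--\ref{coordinate-local-Lip}). This contradicts $d(x_0,z)=d(y_0,z)$ for all $z\in\Gamma_0$ with $x_0\neq y_0$.

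The main obstacle I anticipate is making the ``$\Phi_L$ separates points'' step on the limit space precise and uniform: one must ensure the argument of Lemmas~\ref{lemma-rL}--\ref{coordinate-local-Lip} transfers to the $C^{1,\alpha}$-limit $M_0$ (where the metric is only $C^{1,\alpha}$, so ``differentiating the distance function'' must be interpreted via first variation of arclength and the hinge-angle machinery of Lemma~\ref{lemma-angle-continuity}), and that the net $\Gamma_0$, being only $r_L$-dense rather than truly dense, still contains a point realizing the angle bound $|\angle z x_0 y_0-\pi/2|\geq\theta_0$ — but this is precisely the content of Lemma~\ref{lemma-rL} applied on $M_0$, whose hypotheses ($M_0\in\M(n,D,K,v_0)$, $\Gamma_0$ an $r_L$-net) are met. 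Once that is in place, the same Corollary~3.2 of \cite{Ivanov1} estimate used in Lemma~\ref{coordinate-local-Lip} gives $|d(x_0,z)-d(y_0,z)|\geq C(\theta_0)d(x_0,y_0)-C d(x_0,y_0)^2$, which need not be positive when $d(x_0,y_0)$ is large; so to close the argument at large scales one subdivides $[x_0y_0]$ into short segments and propagates, using that along the geodesic the coordinate $d(\cdot,z_1)$ is strictly monotone on the portion where the relevant geodesic stays outside $\mathrm{Cut}(x)$ — giving a net change bounded below, hence $\Phi_L(x_0)\neq\Phi_L(y_0)$. Assembling these pieces yields the uniform global bi-Lipschitz constant depending only on $n,D,K,K_1,v_0,r_0$, as claimed.
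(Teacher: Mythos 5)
Your proposal and the paper's proof agree on the overall shape --- reduce to a contradiction in the $C^{1,\alpha}$-limit --- but diverge at a crucial choice, and the route you pick leaves a real gap.

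You fix the $r_L$ of Lemma~\ref{coordinate-local-Lip} and run the contradiction sequence with maximal $r_L$-separated sets of that fixed mesh. The limit landmark set $\Gamma_0$ is then only an $r_L$-net in $B(p_0,r_0/2)$, not dense, and you end up with two points $x_0\neq y_0$ at distance at least $r_1$ having equal distances to every $z\in\Gamma_0$. You then need to show this is impossible, but the only tools on the table --- the angle bound of Lemma~\ref{lemma-rL} combined with Corollary~3.2 of \cite{Ivanov1} --- give $|d(x_0,z)-d(y_0,z)|\geq C(\theta_0)d(x_0,y_0)-Cd(x_0,y_0)^2$, which, as you yourself note, loses once $d(x_0,y_0)$ is of order $r_1$. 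The ``subdivide and propagate'' repair does not close the gap: the landmark $z$ realizing $|\angle z\,w\,y_0-\pi/2|\geq\theta_0$ at a point $w$ on $[x_0y_0]$ depends on $w$, so monotonicity of a single coordinate $d(\cdot,z_1)$ along the whole geodesic is not available, and ``each short segment moves some coordinate'' gives no lower bound on $|\Phi_L(x_0)-\Phi_L(y_0)|$ because both the active coordinate and the sign of its increment can change from segment to segment. Nor does \cite[Lemma~3.30]{KKL} help you: that requires equal distances on an open set, not merely on an $r_L$-net. What you would need at this step is exactly a global separation statement for a fixed $r_L$-net on the limit space, which is the content of the Proposition being proved and is not supplied by any lemma established so far.

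The paper avoids the difficulty by one small but decisive change: in the contradiction sequence the landmark sets are taken to be maximal $1/k$-separated, so their mesh goes to zero with $k$. The limit $\Gamma_0$ is then dense in $B(p_0,r_0/2)$, so $d(x_0,\xi)=d(y_0,\xi)$ for all $\xi$ in the ball by continuity, and \cite[Lemma~3.30]{KKL} immediately gives $x_0=y_0$. This forces $d_{M_k}(x_k,y_k)\to 0$, so for large $k$ one has $d_{M_k}(x_k,y_k)<\cLip$ and $1/k<r_L$, at which point the lower-bound step of Lemma~\ref{coordinate-local-Lip} applies (that step only needs the landmark set to be an $r_L$-net, which a $1/k$-net is once $1/k<r_L$) and contradicts the vanishing ratio. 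In other words, the shrinking-mesh device pushes the separation question back into the small-distance regime where the local lemma already does the work, rather than leaving you to separate points at a definite macroscopic distance with a fixed coarse net. To repair your argument, adopt that device; I do not see a non-circular way to establish the macroscopic injectivity step as you have set it up.
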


\begin{proof}
Since $M$ is compact, local Lipschitz implies global Lipschitz.
Suppose bi-Lipschitz does not hold: then there exist a sequence of manifolds $M_k\in \M(n,D,K,K_1,v_0)$, some maximal $1/k$-separated set in $B_k(p_k,r_0/2)\subset M_k$, and some points $x_k,y_k \in M_k\setminus B_k(p_k,r_0)$ such that
$$\frac{|\Phi_{L,k}(x_k)-\Phi_{L,k}(y_k)|}{d_{M_k}(x_k,y_k)}\to 0, \quad \textrm{ as }k\to \infty,$$
where $\Phi_{L,k}$ is defined with respect to the maximal $1/k$-separated set.
Up to subsequences, we may assume that $M_k\to M_0$, $x_k\to x_0$, $y_k\to y_0$, $p_k\to p_0$ and the maximal $1/k$-separated sets converge to a dense set in $B(p_0, r_0/2)$, as $k\to \infty$.
This implies that $d_{M_0}(x_0,\xi)=d_{M_0}(y_0,\xi)$ for any $\xi\in B(p_0,r_0/2)$,
which yields $x_0=y_0$, see e.g. \cite[Lemma 3.30]{KKL}. Hence, when $k$ is sufficiently large, we have $d_{M_k}(x_k,y_k)<\min\{\cLip,\inj(M_k)/2 \}$ and $1/k<r_L$ for the constants $\cLip,r_L$ determined in Lemma \ref{coordinate-local-Lip}, leading to a contradiction to local bi-Lipschitz.
\end{proof}

\section{Reconstruction of metric structure} \label{sec-metric}

In this section, we reconstruct the metric structure of $(M,g)$ from an approximation of the spectral data on an open subset $U\subset M$. We mainly follow the method in \cite{BKL,BILL} but need to make modifications for the purpose of reconstructing the potential in Section \ref{sec-potential}. 
First, the quantitative unique continuation gives an approximate reconstruction of the Fourier coefficients of functions restricted on the domain of influence, which in particular approximates the $L^2$-norm of the first eigenfunction restricted on the domain of influence. Next, we use the approximation of the $L^2$-norm to perform a slicing procedure on the manifold and construct a Hausdorff approximation of the interior distance functions.
Given an open subset $U\subset M$, recall that the interior distance function $r_x: U\to \R$ corresponding to a point $x\in M$ is defined by $r_x(z)=d(x,z)$ for $z\in U$, see e.g. \cite[Sec. 3.7]{KKL}.
We denote the set of interior distance functions on $U$ by
$$\mathcal{R}_U (M) = \{r_x \in L^{\infty}(U): x\in M\}.$$

Suppose that $U$ contains a ball $B(p,2r_0)$ and the Riemannian metric on $U$ is given. 
Without loss of generality, we assume that
$$0<r_0<\frac{i_0}{4} ,$$
where $i_0=i_0(n,D,K,v_0)$ is the lower bound for the injectivity radius in \eqref{inj}.
Let $0<\e<r_0/64$ be fixed in this section.
We choose open subsets of $B(p,r_0)$ in the following way. 
Let $\{z_k\}_{k=1}^N$ be a maximal $\e/2$-separated set in $B(p,r_0/2)$.
Let $\{U_k\}_{k=1}^N$ be disjoint open subsets containing $z_k$ in $B(p,r_0)$ satisfying
\begin{equation} \label{partition}
B(p, \frac{r_0}{2}) \subset \bigcup_{k=1}^N \overline{U_k} \subset B(p,\frac{3r_0}{4}), \quad \textrm{diam}(U_k)\leq \e.
\end{equation}
Without loss of generality, assume that every $U_k$ contains a ball of radius $\e/4$. 
Note that with the choice of $U_k$ above, the total number $N$ is bounded above:
\begin{equation} \label{bound-N}
N\leq C(n,D,K) \e^{-n}.
\end{equation}
Let $\alpha=(\alpha_1,\cdots,\alpha_N)$ with $\alpha_k\in [\e, D]\cup \{0\}$ be a multi-index. We define the domain of influence associated with $\alpha$ by
\begin{equation}\label{Xalpha}
M_{\alpha}:=\bigcup_{k=1}^N M(U_k, \alpha_k)=\bigcup_{k=1}^N \big\{ x \in M: d(x,U_k) < \alpha_k \big\}.
\end{equation}

\subsection{Approximate reconstruction of Fourier coefficients}

In this subsection we approximate the Fourier coefficients of $\chi_{M_{\alpha}}u$, where $\chi$ is the characteristic function.
Equipped with the quantitative unique continuation result, Theorem \ref{uc-Y}, the proof of this subsection is essentially identical to that of \cite{BKL,BILL}.
We shall only explain the part that is specific to our setting.

Let $q$ be a potential function on $M$ satisfying \eqref{bound-q-positive}.
Let $\mathcal{V}_J={\rm span}\{\phi_1,\cdots,\phi_J\}\subset C^{\infty}(M)$ be the span of the first $J$ eigenfunctions, where $\{\phi_j\}_{j=1}^{\infty}$ is a complete family of $L^2$-orthonormalized eigenfunctions of $-\Delta_g +q$ on $M$.
Given $E>1$ and a small parameter $\e_1>0$,
we consider the set
\begin{equation} \label{def-U}
\mathcal{U}(J,E,\e_1) :=\bigcap_{k=1}^N \, \Big\{v\in \mathcal{V}_J: \, \|v\|_{1;q}\leq E, \quad \|W(v)\|_{H^1(U_k\times [-\alpha_k,\alpha_k])} \leq \e_1 \Big\},
\end{equation}
where $W(v)$ is the solution to the wave equation with the initial value $v$:
\begin{equation}
\partial_t^2 W -\Delta_g W + qW=0\;\; \textrm{ in }M\times \R,\quad \quad W|_{t=0}=v, \quad \partial_t W|_{t=0}=0.
\end{equation}
In \eqref{def-U} the quantity $\|v\|_{1;q}$ is defined by
\begin{equation}
\|v\|_{1;q}^2 := \int_M |\nabla v|^2 + \int_M qv^2.
\end{equation}
Observe that this quantity $\|v\|_{1;q}$ can be tested using the spectral data. 
Namely, if $v=\sum_{j=1}^J v_j \phi_j$, then
\begin{equation}
\|v\|_{1;q}^2= \langle (-\Delta_g +q)v , v\rangle_{L^2(M)} = \sum_{j=1}^J \lambda_j v_j^2.
\end{equation}
In particular, under the conditions \eqref{bound-q-positive}, this quantity is equivalent to the $H^1$-norm:
\begin{equation}
\Cq^{-\frac12} \|v\|_{H^1(M)} \leq \|v\|_{1;q} \leq \Cq^{\frac12} \|v\|_{H^1(M)}.
\end{equation}
For the second condition in \eqref{def-U}, it can also be tested using the spectral data on $U$, since 
\begin{equation}
W(v)(x,t) =\sum_{j=1}^J v_j \cos(\sqrt{\lambda_j}t) \phi_j(x),
\end{equation}
which is a known function on $(U,g|_U)$. Note that we have used \eqref{bound-lambda1} under the conditions \eqref{bound-q-positive}. 
Thus, we have shown that the conditions in \eqref{def-U} can be tested by the spectral data.

\smallskip
Now suppose that we only know a $\delta$-approximation $\{\lambda_j^a,\phi_j^a|_U\}$ of the spectral data $\{\lambda_j,\phi_j|_U\}$ on $U$ in the sense of Definition \ref{def-data-close}.
If $v=\sum_{j=1}^J v_j \phi_j$ satisfies $\|v\|_{L^2}\leq 1$, one can verify that
\begin{equation}
\Big| \|v\|^2_{1;q} - \sum_{j=1}^J \lambda_j^a v_j^2 \Big| <\delta,
\end{equation}
and
\begin{equation}
\Big\| W(v) - \sum_{j=1}^J v_j \cos(\sqrt{\lambda_j^a}t) \phi_j^a \Big\|_{H^1(U\times [-D,D])} < C(D,\Cq,\Vol(M)) J \lambda_J^{\frac12} \delta.
\end{equation}
Therefore, the set $\mathcal{U}$ needs to be replaced with
\begin{eqnarray} \label{def-Ua}
\mathcal{U}^a :=\bigcap_{k=1}^N \, \Big\{v=\sum_{j=1}^J v_j \phi_j: \, &&\sum_{j=1}^J v_j^2\leq 1,\quad \sum_{j=1}^J \lambda_j^a v_j^2 \leq E^2+\delta,  \nonumber \\
&& \Big\|\sum_{j=1}^J v_j \cos(\sqrt{\lambda_j^a}t) \phi_j^a \Big\|_{H^1 (U_k\times [-\alpha_k,\alpha_k])} \leq \e_1+J\lambda_J^{\frac12}\delta \; \Big\},
\end{eqnarray}
so that the conditions in $\mathcal{U}^a$ can be tested using an approximation of the spectral data on $U$.
Suppose that we are given the first $J$ Fourier coefficients of a function $u$.
The following minimization problem of finding the Fourier coefficients $\{v_j\}_{j=1}^J$ of a function in $\mathcal{U}^a$ that minimizes
\begin{equation}
 \min_{w\in \mathcal{U}^a} \|w-u\|_{L^2(M)},
\end{equation}
is solvable, since \eqref{def-Ua} defines a closed condition for the coefficients $\{v_j\}_{j=1}^J \in \R^J$.


\smallskip
From here, using Theorem \ref{uc-Y} and following the proof in \cite[Sec. 4]{BILL} or \cite{BKL}, we have the following result.

\begin{proposition} \label{prop-Fourier}
Let $u\in H^2(M)$ satisfy $\|u\|_{L^2(M)}=1$ and $\|u\|_{H^2(M)}\leq \Lambda$. 
Let $\alpha=(\alpha_1,\cdots,\alpha_N)$, $\alpha_k\in [\e,D]\cup \{0\}$ be given. 
Then for any $\sigma>0$, there exists sufficiently large $J$ such that the following holds.

There exists sufficiently small $\delta=\delta(\sigma,\e) \leq J^{-1}$ such that
by knowing a $\delta$-approximation $\{\lambda_j^a,\phi_j^a|_U\}$ of the spectral data of $-\Delta_g+q$ on $U$ and the first $J$ Fourier coefficients of $u$, we can find $\{b_j\}_{j=1}^J$ and $u^a=\sum_{j=1}^J  b_j\phi_j$ such that
$$\|u^a- \chi_{M_{\alpha}} u\|_{L^2(M)}\leq \sigma.$$
In the above the Fourier coefficients are with respect to a choice of spectral data $\{\lambda_j,\phi_j|_U\}$ such that Definition \ref{def-data-close} is satisfied.
\end{proposition}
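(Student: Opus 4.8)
The plan is to exploit the quantitative unique continuation of Theorem \ref{uc-Y} to show that membership in the set $\mathcal{U}^a$ of \eqref{def-Ua} forces a function to be small (in $L^2$) outside the domain of influence $M_\alpha$, and conversely that $\chi_{M_\alpha}u$ can be approximated in $L^2$ by an element of $\mathcal{U}^a$; combining these two facts yields the solution $u^a$ of the minimization problem with the claimed error bound. Concretely: first, I fix $E=2\Lambda$ (large enough that $\chi_{M_\alpha}u$ is "almost" admissible for the norm constraint) and treat $\e_1$ as a free small parameter to be chosen at the end. Given $v=\sum_{j\leq J}v_j\phi_j\in\mathcal{U}^a$ with $\|v\|_{L^2}\leq 1$, the estimates recorded just before the statement show that the \emph{true} wave solution $W(v)$ has $\|W(v)\|_{H^1(U_k\times[-\alpha_k,\alpha_k])}\leq \e_1 + 2J\lambda_J^{1/2}\delta$ and $\|v\|_{1;q}\leq E+1$, hence $\|W(v)\|_{H^1(M\times[-D,D])}\leq C(E,D)$. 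Since $\delta\leq J^{-1}$ and we will choose $\delta$ depending on $J$, the term $J\lambda_J^{1/2}\delta$ is harmless. Now apply Theorem \ref{uc-Y} with $f=0$ on each piece: for $x$ with $d(x,U_k)<\alpha_k-\sqrt h$, finite speed of propagation together with the quantitative unique continuation bounds $\|W(v)\|_{L^2}$, and hence (evaluating at $t=0$, using that $W(v)|_{t=0}=v$ together with an energy/Sobolev trace argument to pass from the space-time $L^2$ bound to a bound on $v$ near that region) $\|v\|_{L^2(M(U_k,\alpha_k-\sqrt h))}$ is controlled by $C_5\exp(h^{-C_6})\big(\log(1+ h^{-1}E/(h^{-2}(\e_1+J\lambda_J^{1/2}\delta)))\big)^{-1/2}$. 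Summing over $k$ and letting $h$ be small but fixed, then $\e_1$ small, then $\delta$ small, gives $\|v\|_{L^2(M\setminus M_\alpha)}\leq \sigma/2$ — modulo the standard subtlety that $M_\alpha$ must be slightly shrunk by $\sqrt h$, which is absorbed by choosing $h$ small relative to $\e$ and using that $\alpha_k\geq \e$ whenever $\alpha_k\neq 0$.

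For the converse direction I would run the classical Boundary Control / approximate controllability argument: the set $\{W(v)|_{U_k\times[-\alpha_k,\alpha_k]}\}$ being small does \emph{not} in general single out $\chi_{M_\alpha}u$, but the Tataru unique continuation (qualitatively) shows that the $L^2$-closure of $\{\chi_{M_\alpha}v : v\in\mathcal{V}\}$ as $\mathcal{V}$ ranges over all of $C^\infty(M)$ is exactly $L^2(M_\alpha)$, and a Hahn--Banach / duality argument (exactly as in \cite[Sec.~4]{BILL} or \cite{BKL}) upgrades this to: for the given $u$ there exists $v\in\mathcal{V}_J$ for $J$ large with $\|v\|_{1;q}\leq E$, $\|W(v)\|_{H^1(U_k\times[-\alpha_k,\alpha_k])}$ as small as desired, and $\|v-\chi_{M_\alpha}u\|_{L^2}\leq\sigma/2$. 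Feeding in a $\delta$-approximation of the spectral data only perturbs the defining inequalities of $\mathcal{U}^a$ by $O(J\lambda_J^{1/2}\delta)$, so this same $v$ (or a nearby one) lies in $\mathcal{U}^a$ once $\delta$ is small enough depending on $J$, $\sigma$. Then the minimizer $u^a=\sum_{j\leq J}b_j\phi_j$ of $\min_{w\in\mathcal{U}^a}\|w-u\|_{L^2}$ satisfies $\|u^a-u\|_{L^2}\leq\|v-u\|_{L^2}$; combining with the first direction applied to $u^a-$(something) — more precisely, writing $\|u^a-\chi_{M_\alpha}u\|_{L^2}\leq \|(1-\chi_{M_\alpha})u^a\|_{L^2} + \|\chi_{M_\alpha}u^a-\chi_{M_\alpha}u\|_{L^2}$ and bounding the first term by the UC estimate (since $u^a\in\mathcal{U}^a$) and the second by $\|u^a-u\|_{L^2}\leq\sigma/2$ — yields $\|u^a-\chi_{M_\alpha}u\|_{L^2}\leq\sigma$.

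The order of quantifiers is the delicate bookkeeping: given $\sigma$, first choose $J$ large (for the Boundary Control approximability and for $\chi_{M_\alpha}u$ to be reachable within $\sigma/2$), then choose the auxiliary UC parameter $h$ small depending on $\sigma,\e,E,\Lambda$, then $\e_1$ small, and only then $\delta=\delta(\sigma,\e)\leq J^{-1}$ small enough that all spectral-data perturbations $J\lambda_J^{1/2}\delta$ are negligible against the thresholds already fixed. The main obstacle — and the reason the proposition is only sketched here with a pointer to \cite{BKL,BILL} — is the interplay between the \emph{exponential} prefactor $\exp(h^{-C_6})$ in Theorem \ref{uc-Y} and the requirement that the shrinking $\sqrt h$ of each $M(U_k,\alpha_k)$ be small compared to $\e$: one must verify that after summing over the $N\leq C\e^{-n}$ pieces the single logarithm in the denominator still beats $N\exp(h^{-C_6})$, which is possible precisely because $\e_1$ and $\delta$ can be taken exponentially small in $h^{-1}$ after $h$ is fixed. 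This is exactly where the argument of \cite[Sec.~4]{BILL} is invoked verbatim; the only genuinely new point in our setting is that the potential term $q$ in the wave equation is harmless since Theorem \ref{uc-Y} already accommodates a $C^0$ potential with constants depending on $\|q\|_{C^0}$, so no modification beyond tracking this dependence is needed.
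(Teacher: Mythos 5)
Your high-level plan (quantitative UC for one direction, approximate controllability in the manner of BKL/BILL for the other, then close via the minimization) is indeed the strategy the paper delegates to \cite[Sec.~4]{BILL} and \cite{BKL}. But there is a sign/direction error that runs through the entire writeup and makes the argument as stated incorrect. The conditions defining $\mathcal{U}^a$ force $W(v)$ to be small on $U_k\times[-\alpha_k,\alpha_k]$; by finite speed of propagation and unique continuation this forces $v$ to be small on the \emph{near} region $M(U_k,\alpha_k)=\{d(\cdot,U_k)<\alpha_k\}$, hence on $M_\alpha=\bigcup_k M(U_k,\alpha_k)$ --- not on $M\setminus M_\alpha$. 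In other words, elements of $\mathcal{U}^a$ are approximately supported \emph{outside} $M_\alpha$, not inside. Your own intermediate step gets this right --- you derive a bound on $\|v\|_{L^2(M(U_k,\alpha_k-\sqrt h))}$, which sits inside $M_\alpha$ --- but you then conclude ``$\|v\|_{L^2(M\setminus M_\alpha)}\leq\sigma/2$'', which is the opposite region. Correspondingly, the converse controllability step must produce an element of $\mathcal{U}^a$ close to a mollification of $\chi_{M_\alpha^c}u$, not of $\chi_{M_\alpha}u$; the function $\chi_{M_\alpha}u$ has mass on $U_k\subset M(U_k,\alpha_k)$ whenever $\alpha_k>0$, so its wave solution is emphatically not small on $U_k\times\{0\}$ and it cannot be approximated by anything in $\mathcal{U}^a$.

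Because of this, your identification of $u^a$ with the minimizer $w^*$ of $\min_{w\in\mathcal{U}^a}\|w-u\|_{L^2}$ and the closing triangle inequality do not work. The minimizer approximates $\chi_{M_\alpha^c}u$, so $\|w^*-u\|_{L^2}\approx\|\chi_{M_\alpha}u\|_{L^2}$, which is $O(1)$ and not $\leq\sigma/2$; and for $w^*\in\mathcal{U}^a$ the UC estimate controls $\chi_{M_\alpha}w^*$, not $(1-\chi_{M_\alpha})w^*$. The output of the proposition has to be $u^a:=P_Ju-w^*$ (where $P_J$ is the projection onto $\mathcal{V}_J$, computable from the given Fourier coefficients of $u$), which then satisfies $u^a\approx u-\chi_{M_\alpha^c}u=\chi_{M_\alpha}u$ after one also uses $\|u-P_Ju\|_{L^2}\lesssim\lambda_{J+1}^{-1}\Lambda$ from the $H^2$ bound. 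With the orientation corrected, and keeping your (correct) observations about the boundary layer of width $\sqrt h$ and about the harmlessness of the $C^0$ potential in Theorem \ref{uc-Y}, the argument does reduce to the bookkeeping in \cite[Sec.~4]{BILL}, which is exactly what the paper invokes.
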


We apply Proposition \ref{prop-Fourier} to the (positive) first eigenfunction $\phi_1$.

\begin{lemma} \label{lemma-appnorm}
Let $\alpha=(\alpha_1,\cdots,\alpha_N)$, $\alpha_k\in [\e,D]\cup \{0\}$ be given. 
Then for any $\sigma\in (0,1)$, there exists sufficiently small $\delta=\delta(\sigma,\e)$, such that 
by knowing a $\delta$-approximation $\{\lambda_j^a,\phi_j^a|_U\}$ of the spectral data on $U$,
we can compute a number $\appnorm(M_{\alpha})$ satisfying
$$\Big| \appnorm(M_{\alpha}) - \int_{M_{\alpha}} \phi_1^2 \,\Big| \leq \sigma.$$
\end{lemma}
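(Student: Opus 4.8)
The plan is to obtain $\mathcal{L}^a(M_\alpha)$ as the squared $L^2$-norm of the approximate function $u^a$ produced by Proposition \ref{prop-Fourier}, applied to $u=\phi_1$. First I would check that $\phi_1$ satisfies the hypotheses of Proposition \ref{prop-Fourier}: since $\|\phi_1\|_{L^2(M)}=1$ and $-\Delta_g\phi_1 = (\lambda_1-q)\phi_1$, elliptic regularity together with the uniform bounds \eqref{bound-lambda1}, \eqref{bound-q-positive} and the bounded geometry of the class $\M(n,D,K,v_0)$ gives a uniform bound $\|\phi_1\|_{H^2(M)}\leq \Lambda$ with $\Lambda=\Lambda(n,D,K,v_0,\Cq)$. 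Also, the first $J$ Fourier coefficients of $\phi_1$ are $(1,0,\dots,0)$, which are known a priori and do not even require the spectral data. Hence Proposition \ref{prop-Fourier} applies with this $\Lambda$: for a target accuracy $\sigma'$ (to be chosen in terms of $\sigma$) it yields $J$ and $\delta=\delta(\sigma',\e)$ so that from a $\delta$-approximation of the spectral data we can compute $u^a=\sum_{j=1}^J b_j\phi_j$ with $\|u^a-\chi_{M_\alpha}\phi_1\|_{L^2(M)}\leq \sigma'$.

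Next I would set $\mathcal{L}^a(M_\alpha):=\|u^a\|_{L^2(M)}^2 = \sum_{j=1}^J b_j^2$, which is a quantity computable purely from the coefficients $b_j$ (no knowledge of the eigenfunctions needed). The error estimate then follows from the reverse triangle inequality for the $L^2$-norm: writing $a=\|\chi_{M_\alpha}\phi_1\|_{L^2(M)}=\big(\int_{M_\alpha}\phi_1^2\big)^{1/2}$ and $b=\|u^a\|_{L^2(M)}$, we have $|b-a|\leq \|u^a-\chi_{M_\alpha}\phi_1\|_{L^2(M)}\leq\sigma'$, hence
\begin{equation*}
\Big|\,\mathcal{L}^a(M_\alpha)-\int_{M_\alpha}\phi_1^2\,\Big| = |b^2-a^2| = |b-a|\,(b+a) \leq \sigma'\,(2a+\sigma').
\end{equation*}
Since $0\leq a\leq \|\phi_1\|_{L^2(M)}=1$, the right-hand side is at most $\sigma'(2+\sigma')\leq 3\sigma'$ once $\sigma'<1$. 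Choosing $\sigma'=\sigma/3$ then gives the claimed bound $|\mathcal{L}^a(M_\alpha)-\int_{M_\alpha}\phi_1^2|\leq\sigma$, with $\delta=\delta(\sigma/3,\e)$, which has the required dependence on $\sigma$ and $\e$.

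One subtlety to address carefully is the role of the orthogonal ambiguity in the choice of eigenfunctions. Proposition \ref{prop-Fourier} is stated with respect to a specific choice of spectral data $\{\lambda_j,\phi_j|_U\}$ compatible with Definition \ref{def-data-close}; since $\lambda_1$ is simple (by \eqref{bound-lambda1} and the remarks following it) and $\phi_1>0$ is normalized, $\phi_1$ is unambiguous, so $\int_{M_\alpha}\phi_1^2$ is a well-defined quantity independent of any choice. The computed number $\mathcal{L}^a(M_\alpha)=\sum_j b_j^2$ is manifestly invariant under orthogonal changes of basis within each eigenspace, so it is genuinely determined by the approximate data alone. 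I do not expect a serious obstacle here: the entire content is already packaged in Proposition \ref{prop-Fourier}, and this lemma is essentially its specialization to $u=\phi_1$ plus the elementary norm-squared estimate. The only point requiring a line of justification is the uniform $H^2$-bound on $\phi_1$, which is standard elliptic theory in bounded geometry.
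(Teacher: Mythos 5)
Your proposal is correct and follows essentially the same route as the paper: apply Proposition \ref{prop-Fourier} to $u=\phi_1$ (with accuracy $\sigma/3$), set $\appnorm(M_\alpha)=\|u^a\|_{L^2}^2$, and bound $|b^2-a^2|$ via the reverse triangle inequality and $a\leq 1$. The extra details you supply — the uniform $H^2$-bound from elliptic regularity, the observation that the Fourier coefficients of $\phi_1$ are $(1,0,\dots,0)$, and the invariance under orthogonal changes of basis — are all sound and merely flesh out points the paper leaves implicit.
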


\begin{proof}
Taking $u=\phi_1>0$, Proposition \ref{prop-Fourier} says that for any multi-index $\alpha$, there exists sufficiently large $J$ such that we can find numbers $\{b_j\}_{j=1}^J$ and $u^a=\sum b_j \phi_j$ satisfying
$$\|u^a- \chi_{M_{\alpha}} \phi_1 \|_{L^2(M)}\leq \frac13 \sigma.$$
By the triangle inequality,
$$\Big| \|u^a\|_{L^2}-(\int_{M_{\alpha}} \phi_1^2)^{1/2} \Big|\leq \frac13 \sigma.$$
Then defining 
$$ \appnorm(M_{\alpha}):= \|u^a\|_{L^2}^2=\sum_{j=1}^J b_j^2$$
satisfying the claim, since $\|\phi_1\|_{L^2(M)}=1$.
\end{proof}

\subsection{Approximate reconstruction of metric} \label{subsec-slicing}
Next, we use Lemma \ref{lemma-appnorm} to define a slicing procedure on the manifold similar to \cite{BKL,BILL}.
However, for the purpose of recovering the potential later in Section \ref{sec-potential}, we need to modify the procedure so that different slices are disjoint.

Given any fixed $\e>0$,
let $\{z_k\}_{k=1}^N$ be a maximal $\e/2$-separated set (hence an $\e/2$-net) in $B(p,r_0/2)$, such that $\{z_k\}_{k=1}^L$ for $L<N$ is a maximal $r_L$-separated set in $B(p,r_0/2)$, where $r_L$ is the uniform constant determined in Proposition \ref{coordinate-Lip}.
The number $L$ is bounded by a uniform constant \eqref{bound-L}.
Let $\{U_k\}_{k=1}^N$ be the partition of $U$ as defined in \eqref{partition}.

Given a multi-indice $\beta=(\beta_1,\cdots,\beta_N)$, $\beta_k\in \N$,
we define a ``slice" of the manifold
\begin{equation} \label{def-rbetastar}
M_{\beta}^{\ast} := \bigcap_{k: \, \beta_k>0} \Big\{ x\in M: d(x,U_k)\in [\beta_k \e -\e,\beta_k\e)\Big\}.
\end{equation}
Note that $M_{\beta}^{\ast}$ for different $\beta$ are disjoint by definition, which is later necessary for reconstructing the potential. We also need to define a modified slicing
\begin{equation} \label{def-rbetae}
M_{\beta}^{\e} := \bigcap_{k: \, \beta_k>0} \Big\{ x\in M: d(x,U_k)\in [\beta_k \e -\e-\e^2,\beta_k\e+\e^2)\Big\},
\end{equation}
and a modified multi-index
\begin{equation}
\beta\langle l \rangle:= (\beta_1,\cdots,\beta_L, 0,\cdots,0, \beta_l,0,\cdots,0),\quad \textrm{ for }l\in \{L+1,\cdots,N \}.
\end{equation}

\begin{lemma} \label{norm-beta}
Under the setting of Lemma \ref{lemma-appnorm}, for any $\sigma\in (0,1)$, there exists sufficiently small $\delta=\delta(\sigma,\e)$, such that 
by knowing a $\delta$-approximation $\{\lambda_j^a,\phi_j^a|_U\}$ of the spectral data of $-\Delta_g +q$ on $U$,
we can compute a number $\appnorm(M_{\beta \langle l \rangle}^{\e})$ satisfying
$$\Big| \appnorm(M_{\beta \langle l \rangle}^{\e}) - \int_{M_{\beta \langle l \rangle}^{\e}} \phi_1^2 \,\Big| \leq 2^{L+1}\sigma, \quad \textrm{ for any }l\in \{L+1,\cdots,N \}.$$
\end{lemma}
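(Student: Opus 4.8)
The plan is to express $M_{\beta\langle l\rangle}^{\e}$ as a Boolean combination of the ``one-sided'' domains of influence $M_\alpha$ appearing in Lemma~\ref{lemma-appnorm}, and then reconstruct $\int_{M_{\beta\langle l\rangle}^{\e}}\phi_1^2$ by inclusion--exclusion from the approximate norms $\appnorm(M_\alpha)$. First I would observe that the slice condition $d(x,U_k)\in[\beta_k\e-\e-\e^2,\beta_k\e+\e^2)$ for a single index $k$ can be written as the set difference $M(U_k,\beta_k\e+\e^2)\setminus M(U_k,\beta_k\e-\e-\e^2)$, i.e.\ in terms of two sets of the form $\{x:d(x,U_k)<\alpha\}$. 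Since the modified multi-index $\beta\langle l\rangle$ has at most $L+1$ nonzero entries (the first $L$ coordinates plus one index $l$), the slice $M_{\beta\langle l\rangle}^{\e}$ is an intersection of at most $L+1$ such ``annular'' sets, each a difference of two one-sided domains. Expanding this intersection of differences gives $M_{\beta\langle l\rangle}^{\e}$ as a signed sum of at most $2^{L+1}$ sets, each of which is itself a finite union $\bigcup_k\{x:d(x,U_k)<\alpha_k\}=M_\alpha$ for an appropriate multi-index $\alpha$ with entries in $\{0\}\cup[\e,D]$ (note $\beta_k\e-\e-\e^2\geq \e$ whenever $\beta_k\ge 2$, and the case $\beta_k\e-\e-\e^2<\e$ just means that lower set is empty or the corresponding entry is set to $0$, which only simplifies matters).

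Next I would apply Lemma~\ref{lemma-appnorm} to each of these at most $2^{L+1}$ domains $M_\alpha$ with tolerance $\sigma$: for each we can compute $\appnorm(M_\alpha)$ with $|\appnorm(M_\alpha)-\int_{M_\alpha}\phi_1^2|\le\sigma$, provided $\delta=\delta(\sigma,\e)$ is taken small enough (take the minimum of the finitely many $\delta$'s produced, which still depends only on $\sigma,\e$ since $L=L(n,D,K,v_0,r_0)$ is a uniform bound and the number of terms is controlled). Then I define $\appnorm(M_{\beta\langle l\rangle}^{\e})$ by the same signed inclusion--exclusion combination of these $\appnorm(M_\alpha)$. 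Since $\int\phi_1^2$ is additive over disjoint sets and the exact slice equals the exact signed combination of the exact $\int_{M_\alpha}\phi_1^2$, the triangle inequality propagates the error: each of the at most $2^{L+1}$ terms contributes at most $\sigma$, giving the bound $2^{L+1}\sigma$. Finally, this holds uniformly for all $l\in\{L+1,\dots,N\}$ because for each such $l$ the combinatorial structure is the same (the same $L+1$ active indices $1,\dots,L,l$), so the same choice of $\delta$ works.

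The main technical point to get right is the bookkeeping of the inclusion--exclusion expansion: intersecting $L+1$ \emph{annuli}, each written as $A_k\setminus B_k$ with $B_k\subset A_k$, and verifying that $\bigcap_k(A_k\setminus B_k)$ expands into a signed sum of at most $2^{L+1}$ terms each of which is genuinely one of the admissible $M_\alpha$'s (so that Lemma~\ref{lemma-appnorm} applies verbatim, including the constraint $\alpha_k\in[\e,D]\cup\{0\}$). A small subtlety is that the sets arising in the expansion are intersections $\bigcap_{k}A_k^{(\pm)}$ rather than unions, but $M_\alpha$ as defined in \eqref{Xalpha} is a union; one resolves this by noting $\bigcap_k M(U_k,\gamma_k)$ is \emph{not} of the form $M_\alpha$ in general, so instead I would organize the expansion so that each term is the full domain $M_{\alpha}$ for a suitable $\alpha$ obtained by choosing, coordinate by coordinate, the outer radius (contributing $+$) or the inner radius (contributing $-$) — this is exactly the standard identity $\bigcap_k(A_k\setminus B_k)$ when the $A_k,B_k$ are sublevel sets $\{d(\cdot,U_k)<\alpha\}$ and one uses that $\{d(\cdot,U_k)<\alpha\}\cap\{d(\cdot,U_j)<\alpha'\}$ relates to unions via complementation inside a fixed large ball. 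I expect this measure-theoretic/combinatorial reduction, rather than any new analytic input, to be the only real work; everything else is a direct invocation of Lemma~\ref{lemma-appnorm} and additivity of the integral.
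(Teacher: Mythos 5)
Your proposal is correct and follows the same strategy the paper uses (which it simply cites from \cite{BKL,BILL}): expand the indicator of the slice $M_{\beta\langle l\rangle}^{\e}$, which is an intersection of $L+1$ annuli, into a signed sum of at most $2^{L+1}$ indicators $\chi_{M_\alpha}$, then invoke Lemma~\ref{lemma-appnorm} termwise and the triangle inequality. The ``small subtlety'' you flag at the end is in fact the whole point of the bookkeeping: writing each annulus indicator as $\chi_{\{d(\cdot,U_k)\ge\alpha_k^-\}}-\chi_{\{d(\cdot,U_k)\ge\alpha_k^+\}}$ (superlevel sets), so that the product of the $L+1$ factors expands directly into terms of the form $\pm\prod_k\chi_{\{d(\cdot,U_k)\ge\alpha_k(\gamma)\}}=\pm\bigl(1-\chi_{M_{\alpha(\gamma)}}\bigr)$ (with $\alpha_k(\gamma)=0$ off the active indices), the constant $\pm 1$ contributions cancelling since $\sum_{\gamma}(-1)^{|\gamma|}=0$; a naive expansion in terms of sublevel intersections, as in your first paragraph, would require a second inclusion--exclusion and blow up the term count well beyond $2^{L+1}$. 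With that caveat on the order of operations, the combinatorics, the uniformity of $\delta$ in $\alpha$ and hence in $l$, and the admissibility of the radii $\alpha_k^{\pm}\in[\e,D]\cup\{0\}$ all check out.
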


\begin{proof}
The proof is identical to \cite[Lemma 9]{BKL} or \cite[Lemma 5.4]{BILL}.
The idea is to write the integral over $M_{\beta \langle l \rangle}^{\e}$ as the sum of integrals over sets of the form $M_{\alpha}$ for some $\alpha$. The number of terms in the sum is bounded by $2^{L+1}$ since only $(L+1)$-number of $U_k$ appear in $M_{\beta \langle l \rangle}^{\e}$. Then the claim directly follows from Lemma \ref{lemma-appnorm}.
\end{proof}

\begin{definition} \label{def-rbeta}
We define a set of functions on $B=B(p,r_0/2)$ in the following way.
\begin{itemize}
\item[(1)] For a multi-index $\beta$ satisfying $\beta_k \e\geq r_0/8$ for all $k=1,\cdots,N$, we associate to it a piecewise constant function $r_{\beta}\in L^{\infty}(B)$ defined by
$$r_{\beta}(z):=\beta_k \e, \quad \textrm{ if }z\in U_k ,$$
if the following holds:
\begin{equation} \label{beta-cri}
\appnorm (M_{\beta \langle l \rangle }^{\e})\geq c_{\ast}\e^{2n}, \quad \textrm{ for all }l=L+1,\cdots, N,
\end{equation}
where $c_{\ast}$ is a uniform constant to be determined later in \eqref{cstar}.
We test all multi-indices $\beta$ up to $\beta_k\leq 1+D/\e$ for each $k$, and denote the set of all functions chosen in this way by $\mathcal{R}_{out}^*$.

\item[(2)] We take an arbitrary $\e$-net $\Gamma_{\e}$ containing the base point $p$ in $B(p,r_0)$. Define
$$\mathcal{R}_{in}^* := \{r_{\xi} \in \mathcal{R}_{B}(M): \xi \in \Gamma_{\e} \},$$
where $r_{\xi}$ is the interior distance function on $B$ corresponding to the point $\xi$.
\end{itemize}
We denote
$$\mathcal{R}^* := \mathcal{R}_{in}^* \cup \mathcal{R}_{out}^*.$$
\end{definition}

Note that for $r_{\beta}\in \mathcal{R}_{out}^*$, it could happen that $M_{\beta}^*$ is empty, or $M_{\beta \langle l \rangle}^*$ can be empty for every $l$. However, we will show that the slightly larger $M_{\beta \langle l \rangle}^{\e}$ must be nonempty for all $l$, with suitable chosen uniform constant $c_*$. 
The elements in $\mathcal{R}_{in}^*$ are determined by the given Riemannian metric on $B(p,2r_0)\subset U$, because the minimizing geodesic between any pair of points in $B(p,r_0)$ lies in $B(p,2r_0)$.

\smallskip
To prove that the finite set $\mathcal{R}^*$ approximates the interior distance functions $\mathcal{R}_B(M)$, we rely on the fact that the first eigenfunction $\phi_1$ is uniformly bounded below (Lemma \ref{lemma-eigen-positive}).

\begin{lemma} \label{Hausdorff-close}
Let $(M,g) \in \M(n,D,K,K_1,v_0)$ and $U\subset M$ be an open set containing a ball $B(p,r_0)$ of radius $0<r_0<\inj(M)/2$.
Let $q$ be a potential function on $M$ satisfying \eqref{bound-q-positive}.
Then for any $\e>0$, there exists $\delta_1>0$, such that any $\delta_1$-approximation $\{\lambda_j^a,\phi_j^a|_U\}$ of the spectral data of $-\Delta_g +q$ on $U$ determines a finite set $\mathcal{R}^*$ satisfying
$$d_H(\mathcal{R}^*, \mathcal{R}_B (M)) \leq \CHau \e,$$
where $B=B(p,r_0/2)$ and $d_H$ denotes the Hausdorff distance between subsets of $L^{\infty}(B)$.
The constant $\CHau$ depends on $n,D,K,K_1,v_0,r_0$, and the choice of $\delta_1$ depends on the same set of parameters and additionally on $\e,\Cq$.
\end{lemma}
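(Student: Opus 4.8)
The plan is to show that $\mathcal{R}^*$ is $\CHau\e$-dense in $\mathcal{R}_B(M)$ and, conversely, that every element of $\mathcal{R}^*$ is within $\CHau\e$ of some interior distance function. The first inclusion is the substantial one. Given $x\in M$, I want to find a multi-index $\beta$ with $r_\beta\in\mathcal{R}_{out}^*$ (or, if $x$ is close to $B(p,r_0)$, an element of $\mathcal{R}_{in}^*$) such that $\|r_\beta - r_x\|_{L^\infty(B)}\leq \CHau\e$. The natural candidate is $\beta_k := \lceil d(x,U_k)/\e\rceil$, so that $d(x,U_k)\in[\beta_k\e-\e,\beta_k\e)$, i.e. $x\in M_\beta^*$, and hence $x\in M_{\beta\langle l\rangle}^\e$ for every $l$. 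Since $\operatorname{diam}(U_k)\leq\e$, we get $|r_\beta(z)-r_x(z)|=|\beta_k\e - d(x,z)|\leq |\beta_k\e - d(x,U_k)| + \operatorname{diam}(U_k)\leq 2\e$ for $z\in U_k$; summing the uniform bound over the partition gives $\|r_\beta-r_x\|_{L^\infty(B)}\leq 2\e$. The point requiring the eigenfunction lower bound is to verify that this $\beta$ actually passes the test \eqref{beta-cri}: because $x\in M_{\beta\langle l\rangle}^\e$, Lemma~\ref{lemma-eigen-positive} gives $\int_{M_{\beta\langle l\rangle}^\e}\phi_1^2 \geq \ceigen^2\,\Vol(B(x,c\e))\geq c_*\e^{2n}$ for a suitable uniform $c_*$ (using the volume lower bound from the curvature and volume bounds, and that $M_{\beta\langle l\rangle}^\e$ contains a ball of radius comparable to $\e$ around $x$). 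Then by Lemma~\ref{norm-beta}, choosing $\delta_1$ small enough that $2^{L+1}\sigma$ is below $\frac12 c_*\e^{2n}$, we get $\appnorm(M_{\beta\langle l\rangle}^\e)\geq \frac12 c_*\e^{2n}$, so after a harmless adjustment of $c_*$ the criterion \eqref{beta-cri} holds and $r_\beta\in\mathcal{R}_{out}^*$. (If instead $d(x,B(p,r_0/2))$ is so small that $\beta_k\e< r_0/8$ for some $k$, then $x$ lies near $B(p,r_0)$, so some $\xi\in\Gamma_\e$ is within $\e$ of $x$ and $r_\xi\in\mathcal{R}_{in}^*$ is within $\e$ of $r_x$ by the triangle inequality, using that geodesics between points of $B(p,r_0)$ stay inside $B(p,2r_0)\subset U$ where the metric is known.)

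For the reverse inclusion, I need that each $r_\beta\in\mathcal{R}_{out}^*$ is $\CHau\e$-close to some genuine $r_x$, $x\in M$. Here the subtlety flagged after Definition~\ref{def-rbeta} enters: $M_\beta^*$ may be empty, so I cannot simply take $x\in M_\beta^*$. Instead I argue that the enlarged slice $M_{\beta\langle l\rangle}^\e$ is nonempty for \emph{every} $l\in\{L+1,\dots,N\}$ — this follows precisely from criterion \eqref{beta-cri}, since $\appnorm(M_{\beta\langle l\rangle}^\e)\geq c_*\e^{2n}>0$ forces $\int_{M_{\beta\langle l\rangle}^\e}\phi_1^2 \geq c_*\e^{2n} - 2^{L+1}\sigma>0$ after choosing $\delta_1$ small, hence $M_{\beta\langle l\rangle}^\e\neq\emptyset$. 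Picking $x_l\in M_{\beta\langle l\rangle}^\e$, the bi-Lipschitz property of the distance coordinate $\Phi_L$ from Proposition~\ref{coordinate-Lip} (whose reference points $z_1,\dots,z_L$ are exactly the first $L$ of the $z_k$, a maximal $r_L$-separated set) controls how far apart the $x_l$ can be: all of them satisfy $|d(x_l,z_k)-\beta_k\e|<\e+\e^2$ for $k=1,\dots,L$, so $|\Phi_L(x_l)-\Phi_L(x_{l'})|\leq C\e$, and bi-Lipschitz gives $d(x_l,x_{l'})\leq C\e$ (when $x_l,x_{l'}$ are outside $U$ and closer than $\inj(M)/2$, which is ensured once $\e$ is small relative to the uniform constants). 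Thus the points $x_l$ all lie in a ball of radius $O(\e)$; fixing $x:=x_{L+1}$, I claim $\|r_\beta - r_x\|_{L^\infty(B)}\leq \CHau\e$. For $z\in U_k$ with $k\leq L$ this is immediate from $x\in M_{\beta\langle k\rangle}^\e$... wait, more carefully: for $k\leq L$ we have $d(x,U_k)\in[\beta_k\e-\e-\e^2,\beta_k\e+\e^2)$ directly since $x\in M_{\beta\langle L+1\rangle}^\e\subset\{d(\cdot,U_k)\in[\beta_k\e-\e-\e^2,\beta_k\e+\e^2)\}$, giving $|d(x,z)-\beta_k\e|\leq \e+\e^2+\operatorname{diam}(U_k)\leq 3\e$; for $k>L$, use that $M_{\beta\langle k\rangle}^\e\ni x_k$ with $d(x_k,x)\leq C\e$, hence $d(x,U_k)$ is within $C\e$ of $\beta_k\e$. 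Combining, $\|r_\beta - r_x\|_{L^\infty(B)}\leq \CHau\e$.

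Finally, elements of $\mathcal{R}_{in}^*$ are literally interior distance functions $r_\xi$ with $\xi\in M$, so they need no approximation, and they are $\e$-dense among $\{r_x: x\in B(p,r_0)\}$ because $\Gamma_\e$ is an $\e$-net in $B(p,r_0)$ and $x\mapsto r_x$ is $1$-Lipschitz; this handles the part of $\mathcal{R}_B(M)$ coming from points near $p$ that the $\mathcal{R}_{out}^*$ construction (which requires $\beta_k\e\geq r_0/8$) does not reach. Assembling the two inclusions yields $d_H(\mathcal{R}^*,\mathcal{R}_B(M))\leq\CHau\e$ with $\CHau$ depending only on $n,D,K,K_1,v_0,r_0$, and $\delta_1$ chosen (through Lemma~\ref{norm-beta}, i.e. through $\sigma\ll c_*\e^{2n}/2^{L+1}$ and $J$) depending additionally on $\e$ and $\Cq$.

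\textbf{Main obstacle.} The delicate point is the two-sided use of the threshold test \eqref{beta-cri}: the constant $c_*$ must be chosen large enough that \emph{every} $x\in M$ produces a $\beta$ passing the test (so that $\mathcal{R}^*$ is dense in $\mathcal{R}_B(M)$), yet small enough that \emph{passing} the test forces $M_{\beta\langle l\rangle}^\e$ to be nonempty and of controlled diameter (so that $\mathcal{R}^*\subset$ neighborhood of $\mathcal{R}_B(M)$). Making this quantitative requires a uniform two-sided volume estimate $c\e^n\leq\Vol(M_{\beta\langle l\rangle}^\e\cap B(x,\e))$ together with the $L^\infty$ bounds $\ceigen\leq\phi_1\leq c_1^{-1}$, and then propagating the resulting geometric control of the slice through the bi-Lipschitz distance coordinate of Proposition~\ref{coordinate-Lip} to pin down a single representative point $x$. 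This is where the low-regularity bi-Lipschitz estimate of Section~\ref{sec-coordinate} is essential, and where care is needed to keep all constants uniform over $\M(n,D,K,K_1,v_0)$.
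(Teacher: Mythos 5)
Your proof follows the paper's argument quite closely: the same two‐directional Hausdorff estimate, the same use of the threshold test \eqref{beta-cri} powered by the first‐eigenfunction lower bound of Lemma~\ref{lemma-eigen-positive}, and the same use of the bi-Lipschitz distance coordinate of Proposition~\ref{coordinate-Lip} to pin down a single representative point in the reverse direction. However, there is a genuine error in the forward direction where you verify that your candidate $\beta$ passes the threshold test. You assert that $M_{\beta\langle l\rangle}^\e$ contains a ball $B(x,c\e)$ of radius comparable to $\e$, writing $\int_{M_{\beta\langle l\rangle}^\e}\phi_1^2 \geq \ceigen^2\Vol(B(x,c\e))$. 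That inclusion is false. If $x\in M_\beta^*$, so $d(x,U_k)\in[\beta_k\e-\e,\beta_k\e)$, and $y\in B(x,r)$, then $d(y,U_k)\in[d(x,U_k)-r,\,d(x,U_k)+r]$; for this to lie in $[\beta_k\e-\e-\e^2,\,\beta_k\e+\e^2)$ even when $d(x,U_k)$ is at an endpoint of $[\beta_k\e-\e,\beta_k\e)$, you need $r\le\e^2$. The thickening in \eqref{def-rbetae} is only $\e^2$ on each side, precisely so that $B(x,\e^2/2)\subset M_\beta^\e\subset M_{\beta\langle l\rangle}^\e$ — a ball of radius $\sim\e^2$, not $\sim\e$. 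That is the reason the threshold in \eqref{beta-cri} is $c_*\e^{2n}$ and not $c_*\e^n$.

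Your final numerical conclusion $\int_{M_{\beta\langle l\rangle}^\e}\phi_1^2\geq c_*\e^{2n}$ happens to be true, but only because $\e^n\geq\e^{2n}$ masks the error: the first inequality in your chain is wrong. Had you actually calibrated the test constant to the ball radius you claimed, you would have chosen a threshold of order $\e^n$, and then the density direction of the Hausdorff estimate would fail, since the legitimate lower bound available from a ball inside the slice is only of order $\e^{2n}$. Replace your ball by $B(x,\e^2/2)$ and the argument goes through as in the paper. The remainder of your proof — the explicit choice $\beta_k=\lceil d(x,U_k)/\e\rceil$, the fallback to $\mathcal{R}_{in}^*$ when some $\beta_k\e<r_0/8$, the nonemptiness of $M_{\beta\langle l\rangle}^\e$ from the threshold test, and the use of Proposition~\ref{coordinate-Lip} to bound the spread of the $x_l$ — is correct and essentially identical to the paper's.
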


\begin{proof}
Let $x\in M$ be arbitrary.
If $x\in B(p,r_0)$, then there exists $\xi \in \Gamma_{\e}$ such that $d(x,\xi)<\e$. Hence,
$$\|r_x-r_{\xi}\|_{L^{\infty}(B)}\leq d(x,\xi)<\e.$$
Namely, there exists $r_{\xi}\in \mathcal{R}_{in}^*$ such that $\|r_x-r_{\xi}\|_{L^{\infty}(B)}<\e$.
Now suppose that $x\in M\setminus B(p,7r_0/8)$.
Then there exists a multi-index $\beta$, with $\beta_k>1$ for all $k$, such that $x\in M_{\beta}^*$.
Since $d(x,\cup_k U_k)\geq r_0/8$ by our choice of $U_k$, it must satisfy that $\beta_k \e\geq r_0/8$ for all $k=1,\cdots,N$.
Consider the ball $B(x,\e^2)$, and observe that $B(x,\e^2/2)\subset M_{\beta}^{\e}$.
By the choice of $\beta$, we have $\|r_x -r_{\beta}\|_{L^{\infty}(B)}\leq 3\e+\e^2<4\e$.
We need to show that this choice of $\beta$ satisfies \eqref{beta-cri} and hence belongs to $\mathcal{R}_{out}^*$.
To this end, by Lemma \ref{lemma-eigen-positive},
$$\int_{M_{\beta}^{\e}} \phi_1^2 \geq \int_{B(x,\e^2/2)} \phi_1^2 \geq  \ceigen^2 c(n) \e^{2n},$$
where $c(n)$ is a constant depending only on $n$.
Since clearly $M_{\beta}^{\e} \subset M_{\beta \langle l \rangle}^{\e}$ for all $l$, then every $M_{\beta \langle l \rangle}^{\e}$ contains $B(x,\e^2/2)$, so the above holds for every $\beta \langle l \rangle$.
In Lemma \ref{norm-beta}, we choose
\begin{equation} \label{sigma1}
\sigma=\frac14 2^{-(L+1)}\ceigen^2 c(n) \e^{2n},
\end{equation}
which determines the choice of sufficiently small $\delta$: this is the choice of $\delta_1$.
Then by Lemma \ref{norm-beta},
$$\appnorm(M_{\beta \langle l \rangle}^{\e})>  \frac12 \ceigen^2 c(n) \e^{2n}.$$
This gives the choice of $c_*$ in Definition \ref{def-rbeta}:
\begin{equation} \label{cstar}
c_*=\frac12 \ceigen^2 c(n).
\end{equation}

For the other direction, let $r^* \in \mathcal{R}^*$ be arbitrary.
If $r^*=r_{\xi}\in \mathcal{R}_{in}^*$ for some $\xi \in B(p,r_0)$, then $r_{\xi}$ immediately satisfies the claim.
Suppose that $r^*=r_{\beta}\in \mathcal{R}_{out}^*$.
By Definition \ref{def-rbeta}, Lemma \ref{norm-beta} and the choice of $\sigma$ in \eqref{sigma1}, we have
$$\int_{M_{\beta  \langle l \rangle }^{\e}} \phi_1^2 \geq c_* \e^{2n}-2^{L+1} \sigma \geq \frac14 \ceigen^2 c(n) \e^{2n}>0.$$
In particular, $M_{\beta  \langle l \rangle }^{\e}\neq \emptyset$ for all $l=L+1,\cdots,N$.
For each $l$, we take a point $y_l\in M_{\beta  \langle l \rangle }^{\e}$.
Then by definition, 
$$\|r_{y_l} - r_{\beta}\|_{L^{\infty}(U_1\cup \cdots \cup U_L \cup U_l)}\leq 3\e+\e^2<4\e.$$
Moreover, by the condition $\beta_k \e\geq r_0/8$ for elements in $\mathcal{R}_{out}^*$,
then $d(y_l, z_k)\geq \beta_k \e-3\e>r_0/16$ for sufficiently small $\e<r_0/64$, for all $k=1,\cdots,L$.
Since $\{z_k\}_{k=1}^L$ is an $r_L$-net in $B=B(p,r_0/2)$, choosing $r_L<r_0/32$, we have
$d(y_l,B)>r_0/32$.
Thus, Proposition \ref{coordinate-Lip} is applicable,
$$d(y_l,y_k) \leq C|\Phi_L(y_l)-\Phi_L(y_k)|\leq 5C\sqrt{L} \e, \quad \textrm{ for any }l,k\in \{L+1,\cdots,N\},$$
where $C$ is the uniform bi-Lipschitz constant depending on $n,D,K,K_1,v_0,r_0$.
Now fix any $y_l$, for all $k=L+1,\cdots,N$,
$$\|r_{y_l}-r_{\beta}\|_{L^{\infty}(U_k)} \leq \|r_{y_l}-r_{y_k}\|_{L^{\infty}(U_k)} +\|r_{y_k}-r_{\beta}\|_{L^{\infty}(U_k)} < (5C\sqrt{L}+4)\e. $$
Hence, ranging $k$ over $\{L+1,\cdots,N\}$, we have
\begin{equation*}
\|r_{y_l}-r_{\beta}\|_{L^{\infty}(B)} < (5C\sqrt{L}+4)\e.  \qedhere
\end{equation*}
\end{proof}

\begin{definition} \label{def-corres}
For an element $r_{\beta}\in \mathcal{R}_{out}^*$, we assign to it a point $x\in M_{\beta \langle l \rangle}^{\e}$ for some $l$, and call $x$ a corresponding point to $r_{\beta}$. 
The existence of corresponding points is guaranteed by the proof of Lemma \ref{Hausdorff-close}, and it satisfies $d(x,B)>r_0/32$ and
$$\|r_x-r_{\beta}\|_{L^{\infty}(B)} \leq \CHau \e.$$
For an element $r_{\xi}\in \mathcal{R}_{in}^*$, we assign the point $\xi \in B(p,r_0)$ as the corresponding point to $r_{\xi}$.
We fix one corresponding point to each element $r^* \in \mathcal{R}^*$.
We denote this set of points by
\begin{equation}
X:=\{x_1,\cdots,x_I\}.
\end{equation}
In particular the base point $p\in X$, since we have chosen $r_p\in \mathcal{R}_{in}^*$ in Definition \ref{def-rbeta}(2).
\end{definition}

\begin{remark}
Note that for $r_{\beta}\in \mathcal{R}_{out}^*$, it could happen that $M_{\beta}^*$ is empty, or $M_{\beta \langle l \rangle}^*$ can be empty for every $l$.
Moreover, the chosen corresponding point $x$ does not necessarily belong to $M_{\beta}^*$ or $M_{\beta \langle l \rangle}^*$ for any $l$.
However, thanks to the bi-Lipschitz property of distance coordinate (Proposition \ref{coordinate-Lip}), it does happen that $M_{\beta}^*$ lies in the $\CHau \e$-neighborhood of any chosen corresponding point, see Lemma \ref{lemma-proximity}.
\end{remark}



Now with Lemma \ref{Hausdorff-close}, applying \cite[Thm. 1.2]{FILLN} gives the following result.

\begin{proposition}
\label{prop-metric}
Let $(M,g) \in \M(n,D,K,K_1,v_0)$ and $U\subset M$ be an open set containing a ball $B(p,2r_0)$ of radius $0<r_0<{\rm inj}(M)/4$.
Let $q$ be a potential function on $M$ satisfying \eqref{bound-q-positive}.
Then for any $\e>0$, there exists $\delta_1>0$, such that 
any $\delta_1$-approximation $\{\lambda_j^a,\phi_j^a |_U\}$ of the spectral data of $-\Delta_g +q$ on $U$ determine the numbers $\h{d}_{ik}$, for $i,k\in \{1,2,\cdots,I\}$, such that
$$|\h{d}_{ik}-d(x_i,x_k)|\leq \Cd \e^{\frac18}, \quad \textrm{for any }x_i,x_k\in X,$$
where $X$ is a choice of corresponding points in the sense of Definition \ref{def-corres}.
Furthermore, the given data determine a Riemannian manifold $(\widehat{M},\widehat{g})$ that is diffeomorphic to $M$ via a bi-Lipschitz diffeomorphism $F:M\to \widehat{M}$ satisfying
$$1-C_1 \e^{\frac{1}{12}} \leq \frac{d_{\widehat{M}} (F(x), F(y))}{d_M (x,y)} \leq 1+C_1 \e^{\frac{1}{12}}, \quad \textrm{ for any }x,y\in M,$$
for some uniform constant $C_1>1$.
In particular, this gives an estimate for the Lipschitz distance:
$$d_{L}\big( (M,g), (\widehat{M},\widehat{g}) \big)\leq C_1\e^{\frac{1}{12}}.$$
The constants $C_1,\Cd$ depend on $n,D,K,K_1,v_0,r_0$, and
the choice of $\delta_1$ depends on the same set of parameters and additionally on $\e,\Cq$.
\end{proposition}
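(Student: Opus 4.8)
The plan is to combine the Hausdorff approximation of the interior distance functions (Lemma \ref{Hausdorff-close}) with the reconstruction machinery of \cite{FILLN} (i.e., \cite[Thm. 1.2]{FILLN}), which upgrades an approximate description of the interior distance functions into a bi-Lipschitz reconstruction of the manifold, together with the explicit bookkeeping of the exponents. First I would fix $\e>0$ and invoke Lemma \ref{Hausdorff-close} to get $\delta_1>0$ so that any $\delta_1$-approximation of the spectral data produces the finite set $\mathcal{R}^*\subset L^\infty(B)$ with $d_H(\mathcal{R}^*,\mathcal{R}_B(M))\le \CHau\e$. Then, from the corresponding points $X=\{x_1,\dots,x_I\}$ of Definition \ref{def-corres}, I would define the candidate distances $\h d_{ik}$ by reading off the values of the associated functions $r^*$ in $\mathcal{R}^*$ at the sample points $z_k$ of $B$ and optimizing over the discrete coordinate (this is exactly the combinatorial data that \cite{FILLN} takes as input). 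The first half of the claim, $|\h d_{ik}-d(x_i,x_k)|\le \Cd\e^{1/8}$, then follows because both $r^*$ and $r_{x_i}$ are $\CHau\e$-close in $L^\infty(B)$ and the sampling/minimization over the $r_L$-net introduces only an error controlled by the bi-Lipschitz constant of the distance coordinate $\Phi_L$ (Proposition \ref{coordinate-Lip}); the worsening of the exponent from $1$ to $\tfrac18$ is the standard loss coming from the stability modulus in \cite{FILLN}, where a Hausdorff error $\eta$ in the interior distance functions yields a Gromov-Hausdorff error of order $\eta^{1/8}$.

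Next I would feed $(\mathcal{R}^*,\{x_i\},\{\h d_{ik}\})$ into \cite[Thm. 1.2]{FILLN}. That theorem, applied in the class $\M(n,D,K,K_1,v_0)$ (using the uniform injectivity radius bound \eqref{inj}, the uniform volume and curvature bounds, and the bi-Lipschitz property of distance coordinates established in Section \ref{sec-coordinate}), produces a smooth Riemannian manifold $(\widehat M,\widehat g)$ together with a diffeomorphism $F:M\to\widehat M$ such that $F$ distorts distances by a factor $1+O(\e^{1/12})$; the exponent $\tfrac1{12}$ is precisely the one recorded in \cite{FILLN} for passing from the $\e^{1/8}$-accurate distance data to a bi-Lipschitz reconstruction (one loses roughly a factor of $\tfrac23$ in the exponent in the smoothing step). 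Once the two-sided bound $1-C_1\e^{1/12}\le d_{\widehat M}(F(x),F(y))/d_M(x,y)\le 1+C_1\e^{1/12}$ is in hand, the Lipschitz distance estimate is immediate: $\mathrm{dil}(F)\le 1+C_1\e^{1/12}$ and $\mathrm{dil}(F^{-1})\le(1-C_1\e^{1/12})^{-1}\le 1+2C_1\e^{1/12}$ for $\e$ small, so $\log\max\{\mathrm{dil}(F),\mathrm{dil}(F^{-1})\}\le C_1'\e^{1/12}$ after adjusting the constant; I would absorb the adjustment into $C_1$ as in the statement.

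The dependence of constants is then traced through: $\CHau$ and $r_L$ depend on $n,D,K,K_1,v_0,r_0$ by Lemma \ref{Hausdorff-close} and Proposition \ref{coordinate-Lip}, the constants appearing in \cite[Thm. 1.2]{FILLN} depend on the same geometric parameters, and $\delta_1$ comes out of Lemma \ref{Hausdorff-close}, which additionally depends on $\e$ and $\Cq$ (through Lemma \ref{lemma-eigen-positive} and the constant $c_*$ in \eqref{cstar}). So $C_1,\Cd$ depend on $n,D,K,K_1,v_0,r_0$ and $\delta_1$ depends on these and on $\e,\Cq$, matching the statement. The main obstacle, I expect, is not conceptual but careful: one must verify that the hypotheses of \cite[Thm. 1.2]{FILLN} are met with our slightly relaxed regularity (only a bound on $\|\nabla R\|$ rather than on higher derivatives), and that the output distances $\h d_{ik}$ are genuinely computable from $\mathcal{R}^*$ alone — the latter is the reason the slicing procedure in Section \ref{subsec-slicing} was modified to keep different slices disjoint and to pin down corresponding points via Definition \ref{def-corres}. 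The exponent arithmetic ($1\to\tfrac18\to\tfrac1{12}$) should be quoted from \cite{FILLN} rather than rederived, and the only genuinely new input relative to \cite{BKL,BILL} is that Proposition \ref{coordinate-Lip} replaces their higher-regularity argument, which is what prevents a third logarithm from appearing.
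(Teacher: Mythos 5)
Your proposal is correct and follows essentially the same route as the paper: the paper's entire proof of Proposition \ref{prop-metric} is a single sentence invoking Lemma \ref{Hausdorff-close} to get the $\CHau\e$-Hausdorff approximation $\mathcal{R}^*$ of $\mathcal{R}_B(M)$ and then applying \cite[Thm.~1.2]{FILLN}, which is exactly what you do. Your internal description of how \cite{FILLN} extracts $\h{d}_{ik}$ (``reading off values at sample points and optimizing over the discrete coordinate'') and your heuristic exponent arithmetic are somewhat speculative and not how that theorem actually proceeds, but since you explicitly quote the $\e^{1/8}$ and $\e^{1/12}$ moduli from \cite{FILLN} as a black box rather than rederiving them, this does not constitute a gap.
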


This proves the first part of Theorem \ref{main1}.
Note that the set $X$ is not determined by the given spectral data: we only know its existence.


\section{Reconstruction of the potential}
\label{sec-potential}

In this section, we reconstruct the potential function from an approximation of the spectral data on an open subset $U\subset M$, with the help of the approximation of metric structure that we established in Proposition \ref{prop-metric}.
The basic idea is to use the approximation of the $L^2$-norm of the first eigenfunction $\phi_1$ over sets of the form $M_{\beta}^*$ to define a discrete operator that approximates the Laplacian on the set $X$ of corresponding points.
We will prove that the given spectral data determine the discrete operator defined by Definition \ref{def-graph-Lap}, and this operator on a discrete function approximates the Laplacian of $\phi_1$ on $X$ with a uniform estimate, Proposition \ref{appro-Lap}.
To avoid bringing in a third logarithm in the final result, we define a partition of $M\setminus U$ using only a fixed number of coordinates in multi-indices $\beta$.
This is made possible by Proposition \ref{coordinate-Lip}.

Suppose that $U$ contains a ball $B(p,2r_0)$. 
Let $\e>0$ be fixed. 
Let $\{z_k\}_{k=1}^N$ be a maximal $\e/2$-separated set (hence an $\e/2$-net) in $B=B(p,r_0/2)$, such that $\{z_k\}_{k=1}^L$ is a maximal $r_L$-separated set in $B$ for the uniform constant $r_L$ determined in Proposition \ref{coordinate-Lip}, where the number $L<N$ is bounded by a uniform constant \eqref{bound-L}.
We assume that $\e,r_L<r_0/64$.

\begin{definition}
We define the set of the first $L$ indices of all multi-indices corresponding to elements in $\mathcal{R}_{out}^*$ as
$$\mathcal{A} := \big\{(\beta_1,\cdots,\beta_L,0,\cdots,0)  : r_{\beta} \in \mathcal{R}_{out}^* \big\},$$
where $\mathcal{R}_{out}^*$ is defined in Definition \ref{def-rbeta}(1).

\end{definition}

\begin{lemma} \label{lemma-proximity}
We have the following properties.
\begin{itemize}
\item[(1)] The sets $M_{\tau}^*$ are disjoint for different $\tau\in \mathcal{A}$, and their union covers $M\setminus B(p,r_0)$.

\item[(2)] For any $\tau\in \mathcal{A}$, there exists a point $x_{\tau} \in X$, satisfying $d(x_{\tau},B)>r_0/32$, such that $M_{\tau}^*$ is contained in the $\CHau \e$-neighborhood of $x_{\tau}$, where the set $X$ has been chosen in Definition \ref{def-corres}. In particular, we have ${\rm diam}(M_{\tau}^*) < 2 \CHau \e$.
\end{itemize}
Note that it is possible that $M_{\tau}^*$ may be empty or disconnected, or $x_{\tau}$ may not belong to $M_{\tau}^*$.
\end{lemma}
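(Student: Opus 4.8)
The plan is to establish the two properties of Lemma \ref{lemma-proximity} by carefully unpacking the definitions of $M_\tau^*$, $M_\beta^*$, $M_{\beta\langle l\rangle}^*$, $M_{\beta\langle l\rangle}^\e$, and of $\mathcal{A}$, and then invoking the bi-Lipschitz property of the distance coordinate $\Phi_L$ (Proposition \ref{coordinate-Lip}) to control distances from only the first $L$ coordinates of a multi-index.

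\textbf{Part (1).} For disjointness, note that $M_\tau^*$ for $\tau=(\beta_1,\dots,\beta_L,0,\dots,0)\in\mathcal{A}$ is by \eqref{def-rbetastar} the set of $x$ with $d(x,U_k)\in[\beta_k\e-\e,\beta_k\e)$ for all $k=1,\dots,L$. If $\tau\neq\tau'$ in $\mathcal{A}$, they differ in some coordinate $\beta_k\neq\beta_k'$, and then the two half-open intervals $[\beta_k\e-\e,\beta_k\e)$ and $[\beta_k'\e-\e,\beta_k'\e)$ are disjoint, so $M_\tau^*\cap M_{\tau'}^*=\emptyset$. For the covering: let $x\in M\setminus B(p,r_0)$. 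For each $k=1,\dots,L$, since $z_k\in B(p,r_0/2)\subset U_k$ and $\mathrm{diam}(U_k)\le\e$, the distance $d(x,U_k)$ is a finite nonnegative number, so there is a unique positive integer $\beta_k$ with $d(x,U_k)\in[\beta_k\e-\e,\beta_k\e)$ (here $\beta_k\ge 1$ because $d(x,U_k)>0$; we should check that $x\notin\overline{U_k}$, which follows since $x\notin B(p,r_0)\supset\overline{U_k}$ by \eqref{partition}). Setting $\tau=(\beta_1,\dots,\beta_L,0,\dots,0)$, we have $x\in M_\tau^*$. It remains to argue that this $\tau$ lies in $\mathcal{A}$, i.e.\ that it arises as the first $L$ coordinates of the multi-index of some $r_\beta\in\mathcal{R}_{out}^*$: extending $\tau$ to a full multi-index $\beta$ by choosing $\beta_l$ ($l>L$) so that $x\in M_\beta^*$ (the same way, using $d(x,U_l)>0$, which holds since $\overline{U_l}\subset B(p,3r_0/4)$ and $d(x,B)\ge r_0/8$, hence $\beta_l\e\ge r_0/8$ for all $k$), then $B(x,\e^2/2)\subset M_{\beta\langle l\rangle}^\e$ for every $l$, and by Lemma \ref{lemma-eigen-positive} together with Lemma \ref{norm-beta} and the choice \eqref{cstar} of $c_*$ (exactly as in the proof of Lemma \ref{Hausdorff-close}), the criterion \eqref{beta-cri} holds, so $r_\beta\in\mathcal{R}_{out}^*$ and $\tau\in\mathcal{A}$.

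\textbf{Part (2).} Fix $\tau\in\mathcal{A}$; by definition $\tau$ comes from some $\beta$ with $r_\beta\in\mathcal{R}_{out}^*$, and in Definition \ref{def-corres} we assigned a corresponding point $x_\tau\in X$ lying in $M_{\beta\langle l_0\rangle}^\e$ for some $l_0>L$, with $d(x_\tau,B)>r_0/32$. Now take any $x\in M_\tau^*$ (if $M_\tau^*=\emptyset$ there is nothing to prove). Since the first $L$ coordinates of $\beta$ equal $\tau$, we have $d(x,U_k)\in[\beta_k\e-\e,\beta_k\e)$ and also $d(x_\tau,U_k)\in[\beta_k\e-\e-\e^2,\beta_k\e+\e^2)$ for $k=1,\dots,L$ (the latter from $x_\tau\in M_{\beta\langle l_0\rangle}^\e$ and \eqref{def-rbetae}). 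Therefore $|d(x,U_k)-d(x_\tau,U_k)|<\e+\e^2<2\e$ for each $k\le L$. Because $d(x,U_k)=d(x,z_k)+O(\e)$ (as $z_k\in U_k$ and $\mathrm{diam}(U_k)\le\e$), this yields $|d(x,z_k)-d(x_\tau,z_k)|<C\e$ for all $k=1,\dots,L$, i.e.\ $|\Phi_L(x)-\Phi_L(x_\tau)|\le C\sqrt{L}\,\e$. To invoke Proposition \ref{coordinate-Lip} we need both $x$ and $x_\tau$ in $M\setminus U$, actually at definite distance from $B$: $d(x_\tau,B)>r_0/32$ is given, and $d(x,B)>r_0/16$ follows from $\beta_k\e\ge r_0/8$ (valid for all $k$ since $r_\beta\in\mathcal{R}_{out}^*$) together with $\mathrm{diam}(U_k)\le\e$ and small $\e$, exactly as in Lemma \ref{Hausdorff-close}. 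Hence by the bi-Lipschitz estimate $d(x,x_\tau)\le C'\sqrt{L}\,\e$, and choosing the constant $\CHau$ appropriately (absorbing the uniform factor $C'\sqrt{L}$ into it, consistently with Lemma \ref{Hausdorff-close}) gives $d(x,x_\tau)\le\CHau\e$, so $M_\tau^*$ lies in the $\CHau\e$-ball around $x_\tau$; applying the triangle inequality to two points of $M_\tau^*$ yields $\mathrm{diam}(M_\tau^*)<2\CHau\e$.

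\textbf{Main obstacle.} The delicate point is Part (2): the corresponding point $x_\tau$ need not lie in $M_\tau^*$ (indeed $M_\tau^*$ may be empty while $x_\tau$ exists), and $x_\tau$ is only pinned down through the \emph{inflated} slice $M_{\beta\langle l_0\rangle}^\e$ and only on the $L$ coordinates $U_1,\dots,U_L$ — there is no a priori control on $d(x_\tau,U_l)$ versus $d(x,U_l)$ for the other $l$. The whole point is that $L$ coordinates already suffice for a distance estimate, which is exactly the content of the bi-Lipschitz property of $\Phi_L$; so the proof hinges on verifying its hypotheses (both points bounded away from $B$, and $\e$ small enough that the various $O(\e)$ and $O(\e^2)$ error terms are controlled), and on tracking the uniform constants so that the same $\CHau$ as in Lemma \ref{Hausdorff-close} can be used. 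A secondary care point is ensuring the integer $\beta_k$ in the covering argument is genuinely positive, which requires $x$ to be strictly outside $\overline{U_k}$; this is where the hypothesis $x\in M\setminus B(p,r_0)$ and the containment $\overline{U_k}\subset B(p,3r_0/4)$ are used.
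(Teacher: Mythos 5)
Your proof is correct and follows essentially the same route as the paper's: Part (1) is handled by the disjointness of the defining half-open intervals plus the first half of the proof of Lemma \ref{Hausdorff-close} (producing a $\beta$ with $y\in M_\beta^*$ and $r_\beta\in\mathcal{R}_{out}^*$), and Part (2) uses the corresponding point $x_\tau\in M_{\beta\langle l_0\rangle}^\e$ from Definition \ref{def-corres}, bounds $|d(y,z_k)-d(x_\tau,z_k)|$ for $k\le L$ via the defining inequalities of $M_\tau^*$ and $M_{\beta\langle l_0\rangle}^\e$, and closes with Proposition \ref{coordinate-Lip}. You redo the relevant fragments of Lemma \ref{Hausdorff-close} explicitly rather than citing them, but the ideas and the role of the bi-Lipschitz distance coordinate are identical.
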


\begin{proof}
(1) Recall the definition \eqref{def-rbetastar}: for $\tau=(\tau_1,\cdots,\tau_L,0,\cdots,0) \in \mathcal{A}$, $\tau_k\in \N$,
\begin{equation} \label{Mtau}
M_{\tau}^{\ast} := \bigcap_{k=1}^L \Big\{ x\in M: d(x,U_k)\in [\tau_k \e -\e,\tau_k\e)\Big\},
\end{equation}
since $\tau_k\e\geq r_0/8$ for all $k=1,\cdots,L$, by Definition \ref{def-rbeta}.
It is clear that $M_{\tau}^{\ast}$ are disjoint for different $\tau$.
Let $y\in M\setminus B(p,r_0)$ be arbitrary. By the first part of the proof of Lemma \ref{Hausdorff-close}, there exists a multi-index $\beta$ satisfying $y\in M_{\beta}^*$ such that $r_{\beta}\in \mathcal{R}_{out}^*$.
We take $\tau=(\beta_1,\cdots, \beta_L,0,\cdots,0)$ which belongs to $\mathcal{A}$. Since $M_{\beta}^* \subset M_{\tau}^*$, then $y\in M_{\tau}^*$.

\smallskip
(2) Let $\tau\in \mathcal{A}$ be arbitrary. By definition, there exists a multi-index $\beta$ corresponding to $r_{\beta} \in \mathcal{R}_{out}^*$ such that $\tau=(\beta_1,\cdots, \beta_L,0,\cdots,0)$.
By the second part of the proof of Lemma \ref{Hausdorff-close}, there exists a corresponding point $x_{\tau}\in M_{\beta\langle l \rangle}^{\e}\neq \emptyset$ for some $l\in \{L+1,\cdots,N\}$, satisfying $d(x_{\tau},B)>r_0/32$.
Assuming the set $X$ has already been chosen in Definition \ref{def-corres}, we can simply pick $x_{\tau}$ to be the point in $X$ corresponding to $r_{\beta}$.
Then by definition \eqref{def-rbetae} and triangle inequality, we have 
$$\beta_k\e -3\e \leq d(x_{\tau}, z_k) \leq \beta_k\e +2\e, \quad \textrm{ for all }k=1,\cdots,L.$$
For any $y\in M_{\tau}^*$ if nonempty, then the inequality above is also satisfied for $y$ in place of $x_{\tau}$ by definition. Using $\beta_k \e \geq r_0/8$ for all $k$, one similarly see that $d(y,B)>r_0/32$ for sufficiently small $\e, r_L$ depending on $r_0$.
Hence, applying Proposition \ref{coordinate-Lip} proves the claim $d(x_{\tau},y)\leq 5C\sqrt{L}\e<\CHau \e$ for any $y\in M_{\tau}^*$.
\end{proof}

\begin{figure}[h]
  \begin{center}
    \includegraphics[width=0.65\linewidth]{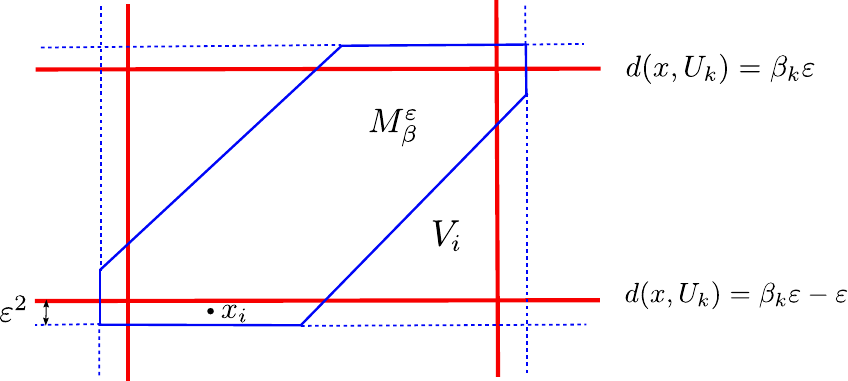}
    \caption{An illustration of sets $V_i$ and corresponding points $x_i$ (for $L=2$). The middle region enclosed by red lines is $V_i$ of the form $M_{\tau}^*$ for $\tau=(\beta_1,\beta_2,0,\cdots,0)\in \mathcal{A}$.
    The multi-index $\beta$ satisfies the criterion \eqref{beta-cri} and hence $M_{\beta \langle l \rangle}^{\e} \neq \emptyset$ for all $l$.
     The corresponding point $x_i$ to $V_i$ is chosen as any one point of $M_{\beta \langle l \rangle}^{\e}$ in the blue region enlarged by scale $\e^2$.
    It could happen that $V_i$ may be empty for some $i$, or $x_i$ may not belong to $V_i$.}
    \label{fig_slicing} 
  \end{center}
\end{figure}

\begin{definition} \label{def-Vi}
Now let us change notations. We denote by $V_i$ a set of the form $M_{\tau}^*$ (possibly empty set) for some $\tau\in \mathcal{A}$, where the index $i$ ranges over $1,\cdots, I_L$. 
The total number of such sets is uniformly bounded $I_L\leq (D/\e)^L$.
For any $V_i$ corresponding to $M_{\tau}^*$ for some $\tau\in \mathcal{A}$, let $x_i\in X$ be one point in $X$ such that Lemma \ref{lemma-proximity}(2) holds, see Figure \ref{fig_slicing}.
Note that the choice of $x_i$ for each $V_i$ may not be unique, since there could be two different multi-indices in $\mathcal{R}_{out}^*$ whose first same $L$ indices give the corresponding multi-index for $V_i$.
For our purposes, for any given $\tau\in \mathcal{A}$, we take only one of such multi-indices and take its corresponding point in $X$ in the sense of Definition \ref{def-corres}.
In particular, the base point $p\in X$ as remarked in Definition \ref{def-corres}.
\end{definition}

Proposition \ref{prop-metric} and Lemma \ref{lemma-proximity} give the following statement.

\begin{proposition} \label{partition-L}
Let $(M,g) \in \M(n,D,K,K_1,v_0)$ and $U\subset M$ be an open set containing a ball $B(p, 2r_0)$ of radius $0<r_0<{\rm inj}(M)/4$. 
Let $q$ be a potential function on $M$ satisfying \eqref{bound-q-positive}.
Then for any $\e>0$, there exists $\delta_1>0$, such that any $\delta_1$-approximation $\{\lambda_j^a,\phi_j^a |_U\}$ of the spectral data of $-\Delta_g +q$ on $U$ determine a collection of (possibly empty) sets $\{V_i\}_{i=1}^{I_L}$ and numbers $\h{d}_{ik}$ for $i,k\in \{0,\cdots, I_L\}$ satisfying the following properties.
\begin{itemize}
\item[(1)] The sets $V_i$ are disjoint for different $i$, and their union covers $M\setminus B(p,r_0)$.

\item[(2)] There exists a set of points $\{x_i\}_{i=1}^{I_L}$ with $I_L\leq (D/\e)^{L}$, satisfying $V_i\subset B(x_i, \CHau \e)$ for all $i=1,\cdots,I_L$, such that 
$$\big| \h{d}_{ik}-d(x_i,x_k) \big| \leq \Cd \e^{\frac18}, \quad \textrm{ for all } i,k=1, \cdots, I_L,$$
where $L$ is uniformly bounded depending on $n,D,K,K_1,v_0,r_0$.

\item[(3)] The distance between $x_i$ and the base point $p$ can be approximated by
$$\big| \h{d}_{0 i}-d(p,x_i) \big| \leq \Cd \e^{\frac18}, \quad \textrm{ for all } i=1, \cdots, I_L.$$
\end{itemize}
\end{proposition}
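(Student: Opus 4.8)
The plan is to assemble the statement from the slicing construction of Section~\ref{subsec-slicing}, Lemma~\ref{lemma-proximity}, and Proposition~\ref{prop-metric}; the only point needing attention is the bookkeeping that separates objects genuinely \emph{determined} by an approximation of the spectral data from those whose \emph{existence} is merely asserted.

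First I would note that a $\delta_1$-approximation of the spectral data determines the index set $\mathcal{A}$, with $\delta_1$ taken to be the constant of Proposition~\ref{prop-metric} (which already absorbs the finitely many applications of Lemma~\ref{norm-beta} used to build $\mathcal{R}_{out}^*$). Indeed, by Definition~\ref{def-rbeta}(1) the membership of a multi-index $\beta$ in $\mathcal{R}_{out}^*$ is decided by the inequalities $\appnorm(M_{\beta\langle l\rangle}^\e)\geq c_*\e^{2n}$, $l=L+1,\dots,N$, and each $\appnorm(M_{\beta\langle l\rangle}^\e)$ is computable from the approximate spectral data by Lemma~\ref{norm-beta} with $\sigma$ as in \eqref{sigma1}; as the tested multi-indices obey $r_0/(8\e)\leq\beta_k\leq 1+D/\e$, only finitely many tests occur. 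For each $\tau\in\mathcal{A}$ I then set $V_i$ to be the slice $M_\tau^*$ of \eqref{def-rbetastar}, relabelled by $i=1,\dots,I_L$; since each of the $L$ free coordinates of $\tau$ takes at most $D/\e$ values for $\e<r_0/64$, this yields $I_L\leq(D/\e)^L$, as recorded in Definition~\ref{def-Vi}.

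Property~(1) is then exactly Lemma~\ref{lemma-proximity}(1): the slices $M_\tau^*$, $\tau\in\mathcal{A}$, are pairwise disjoint and their union covers $M\setminus B(p,r_0)$. For property~(2), Lemma~\ref{lemma-proximity}(2) attaches to each $\tau\in\mathcal{A}$ a point $x_\tau\in X$ with $d(x_\tau,B)>r_0/32$ and $M_\tau^*\subset B(x_\tau,\CHau\e)$, which after the relabelling is the required family $\{x_i\}_{i=1}^{I_L}\subset X$ satisfying $V_i\subset B(x_i,\CHau\e)$. The points $x_i$ are not themselves reconstructed, but once the choice of $x_i$ attached to each $V_i$ is fixed as in Definition~\ref{def-Vi}, the numbers $\h{d}_{jk}$ produced by Proposition~\ref{prop-metric} for all pairs of points of $X$ restrict to numbers $\h{d}_{ik}$, $i,k\in\{1,\dots,I_L\}$, with $|\h{d}_{ik}-d(x_i,x_k)|\leq\Cd\e^{1/8}$. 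Property~(3) is the same: the base point $p$ lies in $X$ since $r_p\in\mathcal{R}_{in}^*$ by Definition~\ref{def-rbeta}(2), so the entry of the distance matrix of Proposition~\ref{prop-metric} between $p$ and $x_i$ approximates $d(p,x_i)$ to within $\Cd\e^{1/8}$, and I record it as $\h{d}_{0i}$.

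I do not expect a genuine obstacle: this is bookkeeping on top of the already-established Lemma~\ref{lemma-proximity} and Proposition~\ref{prop-metric}. The subtlest point is to keep the choice of corresponding point $x_i\in X$ for each slice $V_i$ fixed once and for all, so that one consistent distance matrix from Proposition~\ref{prop-metric} serves all three conclusions simultaneously; a secondary point is merely that the relevant $\delta_1$ depends only on $n,D,K,K_1,v_0,r_0,\e,\Cq$, which is inherited from Proposition~\ref{prop-metric}.
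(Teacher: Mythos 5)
Your proposal is correct and follows exactly the route the paper takes: the paper's own proof is simply the one-line observation that Proposition~\ref{prop-metric} and Lemma~\ref{lemma-proximity} together yield the statement, and your write-up unpacks that bookkeeping (determination of $\mathcal{A}$ via Lemma~\ref{norm-beta}, relabelling of the slices $M_\tau^*$ as $V_i$, restriction of the distance matrix from Proposition~\ref{prop-metric} to the chosen representatives in $X$, and the index $0$ for $p\in X$) without introducing anything new. The point you flag as subtlest—fixing a single corresponding point $x_i\in X$ per slice so that one distance matrix serves all three conclusions—is indeed the only place where care is needed, and you handle it as the paper does via Definition~\ref{def-Vi}.
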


Now we approximate the first eigenfunction $\phi_1$ and its Laplacian $\Delta_g \phi_1$ on $\{x_i\}_{i=1}^{I_L}$ for $x_i\in M\setminus B(p,r_0)$.
From now on, we write 
\begin{equation}
\phi=\phi_1.
\end{equation}

\subsection{Approximating the integral of $\phi_1$}

Let $\e <r_0/64$ be fixed, and
we choose parameters
\begin{equation}\label{def-parameters}
 \rho_1=\varepsilon^{\frac{1}{16}}, \quad \rho_2=\varepsilon^{\frac{1}{64n}}.
\end{equation}
We choose sufficiently small $\e$ such that $\rho_1,\rho_2<r_0/16 < \inj(M)/64$.

\begin{lemma} \label{pointwise}
For $i=1,\cdots,I_L$ such that $d(x_i,p)\geq r_0+\rho_1$,
we define
\begin{equation} \label{def-phihat}
\h{\phi}(x_i) :=\Big(\frac{1}{\nu_n \rho_1^n} \sum_{j: \,\h{d}_{ij}<\rho_1} \int_{V_j} \phi^2 \Big)^{\frac12},
\end{equation}
where $\nu_n$ is the volume of the unit ball in $\mathbb{R}^n$, and
$\h{d}_{ij}$ is the number determined in Proposition \ref{partition-L}.
Then for sufficiently small $\e$, we have
$$\big| \h{\phi}(x_i)-\phi(x_i) \big| \leq C(n,D,K,\CL,\Cd, \ceigen,\|\phi\|_{C^{0,1}}) \, \e^{\frac{1}{16}}.
$$
\end{lemma}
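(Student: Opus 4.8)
The plan is to show that the quantity $\h\phi(x_i)^2$ is, up to a controlled error, an average of $\phi^2$ over the ball $B(x_i,\rho_1)$, and then to use the Lipschitz bound on $\phi$ together with the uniform lower bound $\phi\geq \ceigen$ to convert the average of $\phi^2$ into a pointwise value of $\phi$. First I would note that by Proposition \ref{partition-L}(1) the sets $V_j$ form a partition of $M\setminus B(p,r_0)$ with $V_j\subset B(x_j,\CHau\e)$, and by Proposition \ref{partition-L}(2) we have $|\h d_{ij}-d(x_i,x_j)|\leq \Cd\e^{1/8}$. Hence the index set $\{j:\h d_{ij}<\rho_1\}$ satisfies
$$\bigcup_{j:\,d(x_i,x_j)<\rho_1-\Cd\e^{1/8}} V_j \;\subset\; \bigcup_{j:\,\h d_{ij}<\rho_1} V_j \;\subset\; \bigcup_{j:\,d(x_i,x_j)<\rho_1+\Cd\e^{1/8}} V_j,$$
and since each $V_j$ has diameter at most $2\CHau\e$, the union on the left contains $B(x_i,\rho_1-\Cd\e^{1/8}-2\CHau\e)\setminus B(p,r_0)$ and the one on the right is contained in $B(x_i,\rho_1+\Cd\e^{1/8}+2\CHau\e)$. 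Because $d(x_i,p)\geq r_0+\rho_1$, the ball $B(x_i,\rho_1)$ does not meet $B(p,r_0)$, so the removal of $B(p,r_0)$ is irrelevant and we get
$$\Big|\sum_{j:\,\h d_{ij}<\rho_1}\int_{V_j}\phi^2 \;-\;\int_{B(x_i,\rho_1)}\phi^2\Big|\;\leq\;\int_{B(x_i,\rho_1+C\e^{1/8})\setminus B(x_i,\rho_1-C\e^{1/8})}\phi^2,$$
with $C$ absorbing $\Cd$ and $\CHau$.

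Next I would estimate this annular error. Using the Bishop--Gromov volume comparison (the curvature bound $|\mathrm{Sec}|\leq K^2$ is available), the volume of an annulus of width $2C\e^{1/8}$ around $x_i$ at radius $\approx\rho_1$ is bounded by $C(n,D,K)\,\rho_1^{n-1}\e^{1/8}$; since $\phi$ is bounded (by $\|\phi\|_{C^{0,1}}$, or alternatively by Lemma \ref{lemma-eigen-positive}'s companion sup-bound), the annular integral is $\leq C\,\rho_1^{n-1}\e^{1/8}$. Dividing by $\nu_n\rho_1^n$ and recalling $\rho_1=\e^{1/16}$, this contributes an error of order $\rho_1^{-1}\e^{1/8}=\e^{1/16}$ to $\h\phi(x_i)^2$. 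It remains to compare $\frac{1}{\nu_n\rho_1^n}\int_{B(x_i,\rho_1)}\phi^2$ with $\phi(x_i)^2$. Here I would write $\phi^2(y)=\phi^2(x_i)+O(\|\phi\|_{C^{0,1}}\|\phi\|_{C^0}\,\rho_1)$ for $y\in B(x_i,\rho_1)$, and use that $\mathrm{Vol}(B(x_i,\rho_1))=\nu_n\rho_1^n(1+O(K^2\rho_1^2))$ by volume comparison, so that
$$\Big|\frac{1}{\nu_n\rho_1^n}\int_{B(x_i,\rho_1)}\phi^2 \;-\;\phi(x_i)^2\Big|\;\leq\;C(n,D,K,\|\phi\|_{C^{0,1}})\,\rho_1\;=\;C\,\e^{1/16}.$$
Combining, $\big|\h\phi(x_i)^2-\phi(x_i)^2\big|\leq C\,\e^{1/16}$.

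Finally I would pass from the square to $\phi$ itself: since $\phi(x_i)\geq\ceigen>0$ and $\h\phi(x_i)\geq 0$, we have $|\h\phi(x_i)-\phi(x_i)|=\dfrac{|\h\phi(x_i)^2-\phi(x_i)^2|}{\h\phi(x_i)+\phi(x_i)}\leq \ceigen^{-1}\,|\h\phi(x_i)^2-\phi(x_i)^2|\leq C(n,D,K,\CL,\Cd,\ceigen,\|\phi\|_{C^{0,1}})\,\e^{1/16}$, which is the claim (note $\CL=\CHau$ in the paper's macros). The main obstacle I anticipate is purely bookkeeping: carefully tracking that the $\Cd\e^{1/8}$ distance errors and the $2\CHau\e$ diameter errors on the partition sets, once divided by $\rho_1^n=\e^{n/16}$, still produce an error of size $\e^{1/16}$ and not something worse — this is why the exponent $1/16$ for $\rho_1$ is chosen, so that $\e^{1/8}/\rho_1 = \e^{1/16}$ and $\rho_1$ itself is also $\e^{1/16}$, both matching. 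One should also make sure $\e$ is small enough that $\rho_1-\Cd\e^{1/8}-2\CHau\e>0$ and $d(x_i,p)\geq r_0+\rho_1$ keeps $B(x_i,\rho_1+C\e^{1/8})$ away from $B(p,r_0/2)$ where the partition is not defined; both hold for $\e$ small depending only on the stated parameters.
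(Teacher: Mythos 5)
Your proof is correct and takes essentially the same approach as the paper: establish the sandwich inclusion for $\bigcup_{j:\h d_{ij}<\rho_1} V_j$ between two balls differing from $B(x_i,\rho_1)$ by an annulus of width $O(\e^{1/8})$, control the annular contribution and the Lipschitz oscillation of $\phi^2$ to get $|\h\phi(x_i)^2-\phi(x_i)^2|\lesssim \rho_1+\rho_1^{-1}\e^{1/8}=O(\e^{1/16})$, and then use the uniform lower bound $\phi\geq\ceigen$ to pass from $\phi^2$ to $\phi$. The only cosmetic differences are that you use diameter $2\CHau\e$ where the paper uses $d(\xi,x_k)<\CHau\e$ directly (harmless slack), and your final step bounds $\h\phi+\phi\geq\ceigen$ rather than the paper's $\h\phi\geq\ceigen/2$; both are fine.
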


\begin{proof}
First, we claim that
\begin{equation}\label{partition-gap}
B(x_i, \rho_1- \Cd \e^{\frac18}-\CL\e)\subset \bigcup_{j: \, \h{d}_{ij}<\rho_1} V_j \subset B(x_i, \rho_1+ \Cd \e^{\frac18}+\CL \e).
\end{equation}
In particular, this shows $\h{\phi}(x_i)>0$ since $V_j$ are disjoint.
Let us verify the first inclusion: the second one is simpler. For any $\xi\in B(x_i,\rho_1- \Cd \e^{\frac18}-\CL\e)$, we see $d(\xi,p)>r_0$ since $d(x_i,p)\geq r_0+\rho_1$.
Then by Proposition \ref{partition-L}(1), there exists $V_k$ such that $\xi\in V_k$. 
Proposition \ref{partition-L}(2) yields that $d(\xi,x_k)<\CL\e$.
It then follows that $d(x_i,x_k)<\rho_1-\Cd \e^{\frac18}$ and thus $\h{d}_{ik}<\rho_1$.

The relation \eqref{partition-gap} implies that
$$\Big| {\Vol} ( B(x_i,\rho_1) ) -  {\Vol}\Big(\bigcup_{j: \, \h{d}_{ij}<\rho_1} V_j  \Big) \Big| \leq  C(n,D,K,\CL,\Cd) \rho_1^{n-1} \e^{\frac18}.$$
Since $V_i$ are disjoint by Proposition \ref{partition-L}(1),
\begin{eqnarray*}
\Big| \int_{\cup_j V_j} \phi^2 - \int_{B(x_i,\rho_1)} \phi^2(x_i) \Big| &\leq& \Big| \int_{\cup_j V_j} \phi^2 - \int_{B(x_i,\rho_1)} \phi^2 \Big| + \Big| \int_{B(x_i,\rho_1)} \phi^2 - \int_{B(x_i,\rho_1)} \phi^2(x_i) \Big| \\
&\leq& C(n,D,K,\CL,\Cd) \|\phi\|_{C^0}^2 \rho_1^{n-1} \e^{\frac18}+ C(n,K)\|\phi^2\|_{Lip}  \,\rho_1^{n+1},
\end{eqnarray*}
where $\|\cdot \|_{Lip}$ denotes the Lipschitz seminorm.
Since $|{\rm vol}(B(x_i,\rho_1))-\nu_n \rho_1^n|\leq C(n,K)\rho_1^{n+2}$, we have
$$\big| \h{\phi}(x_i)^2-\phi(x_i)^2 \big| \leq C (\rho_1+\rho_1^{-1}\e^{\frac18}) \leq C(n,D,K,\CL,\Cd, \|\phi\|_{C^{0,1}}) \e^{\frac{1}{16}}.$$
As $\phi=\phi_1\geq \ceigen$ by Lemma \ref{lemma-eigen-positive}, then for sufficiently small $\e$,  we see that $\h{\phi}(x_i)\geq \ceigen/2$, and the claim follows.
\end{proof}

From now on, we assume that $\e$ is sufficient small so that 
\begin{equation}
\h{\phi}(x_i)\geq \ceigen/2>0,
\end{equation}
for all $i=1,\cdots,I_L$ satisfying $d(x_i,p)\geq r_0+\rho_1$, where $\ceigen$ is the uniform constant determined in Lemma \ref{lemma-eigen-positive}.

\begin{lemma}\label{volume-small}
For $i=1,\cdots,I_L$ such that $d(x_i,p)\geq r_0+\rho_1$,
we define
\begin{equation}
{\Vol}^a(V_i):= (\h{\phi}(x_i))^{-2} \int_{V_i} \phi^2.
\end{equation}
Then for sufficiently small $\e$, we have
$$\Big|\frac{{\Vol}^a(V_i)}{{\Vol}(V_i)} -1 \Big| \leq C(n,D,K,\CL,\Cd, \ceigen,\|\phi\|_{C^{0,1}}) \, \e^{\frac{1}{16}},\quad \textrm{ if }\, \Vol(V_i) \neq 0.$$
Note that if $\Vol(V_i) = 0$, then ${\Vol}^a(V_i)=0$ by definition.
\end{lemma}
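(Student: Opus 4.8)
The plan is to combine the smallness of ${\rm diam}(V_i)$ with the Lipschitz regularity of $\phi^2$, and then to replace the (unknown) value $\phi(x_i)^2$ by the computable quantity $\h{\phi}(x_i)^2$ using Lemma \ref{pointwise}. First I would invoke Proposition \ref{partition-L}(2), which gives $V_i\subset B(x_i,\CHau\e)$, hence $d(\xi,x_i)<\CHau\e$ for every $\xi\in V_i$, whether or not $x_i\in V_i$. Since $\phi=\phi_1$ is smooth, $\phi^2\in C^{0,1}(M)$ with Lipschitz seminorm at most $2\|\phi\|_{C^{0,1}}^2$, so
$$\Big|\int_{V_i}\phi^2-\phi(x_i)^2\,\Vol(V_i)\Big|\leq \int_{V_i}\big|\phi^2(\xi)-\phi(x_i)^2\big|\,d\xi\leq 2\|\phi\|_{C^{0,1}}^2\,\CHau\,\e\,\Vol(V_i).$$
Assuming $\Vol(V_i)\neq 0$, I would divide by $\phi(x_i)^2\,\Vol(V_i)$ and use the uniform lower bound $\phi(x_i)\geq\ceigen$ from Lemma \ref{lemma-eigen-positive} to obtain
$$\Big|\frac{\int_{V_i}\phi^2}{\phi(x_i)^2\,\Vol(V_i)}-1\Big|\leq 2\ceigen^{-2}\|\phi\|_{C^{0,1}}^2\,\CHau\,\e.$$

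Next, since $d(x_i,p)\geq r_0+\rho_1$, Lemma \ref{pointwise} applies and gives $|\h{\phi}(x_i)-\phi(x_i)|\leq C\e^{1/16}$ for a constant $C=C(n,D,K,\CHau,\Cd,\ceigen,\|\phi\|_{C^{0,1}})$; since $\phi(x_i)$ and $\h{\phi}(x_i)$ are uniformly bounded, this upgrades to $|\phi(x_i)^2-\h{\phi}(x_i)^2|\leq C\e^{1/16}$, and dividing by $\h{\phi}(x_i)^2\geq\ceigen^2/4$ (which holds for small $\e$ by the standing assumption recorded above Lemma \ref{volume-small}) yields $|\phi(x_i)^2/\h{\phi}(x_i)^2-1|\leq C\e^{1/16}$. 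Finally I would write
$$\frac{{\Vol}^a(V_i)}{\Vol(V_i)}=\frac{\phi(x_i)^2}{\h{\phi}(x_i)^2}\cdot\frac{\int_{V_i}\phi^2}{\phi(x_i)^2\,\Vol(V_i)}$$
and multiply the two displayed estimates, each factor being of the form $1+O(\e^{1/16})$; this gives $|{\Vol}^a(V_i)/\Vol(V_i)-1|\leq C\e^{1/16}$ with the asserted dependence of $C$. The case $\Vol(V_i)=0$ is trivial, since then $\int_{V_i}\phi^2=0$ and ${\Vol}^a(V_i)=0$ by definition.

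I do not expect a genuine obstacle here. The one point requiring care is that $x_i$ need not lie in $V_i$, and $V_i$ may be empty or disconnected; but this is already absorbed by the inclusion $V_i\subset B(x_i,\CHau\e)$ from Proposition \ref{partition-L}(2), which is all the diameter argument uses. Note also that the final error is dominated by the $\e^{1/16}$ term inherited from the approximation of $\phi(x_i)$ in Lemma \ref{pointwise}, rather than by the $\e$ term coming from ${\rm diam}(V_i)$, so the exponent $\tfrac1{16}$ cannot be improved at this step.
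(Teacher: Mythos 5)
Your proof is correct and takes essentially the same approach as the paper: bound the oscillation of $\phi^2$ on $V_i$ using $V_i\subset B(x_i,\CHau\e)$ and the Lipschitz bound, replace $\phi(x_i)^2$ by $\h{\phi}(x_i)^2$ via Lemma \ref{pointwise}, and divide through using the uniform lower bound on $\h{\phi}(x_i)$. Your multiplicative organization of the two error ratios is only a cosmetic repackaging of the paper's additive chain of inequalities.
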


\begin{proof}
Denote $|V_i|={\rm vol}(V_i)$. Suppose that $|V_i|\neq 0$.
Since $V_i\subset B(x_i, \CHau \e)$ due to Proposition \ref{partition-L}(2), by Lemma \ref{pointwise},
\begin{eqnarray*}
|V_i|^{-1} \int_{V_i}\phi^2 &\leq& |V_i|^{-1} \int_{V_i} \phi^2(x_i)+ \|\phi^2\|_{Lip} \, \CL \e \\
&=&  \phi^2(x_i)+ \|\phi^2\|_{Lip}\, \CL\e \leq \h{\phi}(x_i)^2+ \|\phi^2\|_{Lip}\, \CL\e+C\e^{\frac{1}{16}}.
\end{eqnarray*}
Since $\h{\phi}(x_i) \geq \ceigen/2>0$ for sufficiently small $\e$, dividing $\h{\phi}(x_i)^2$ from both sides, we have
$$\frac{{\Vol}^a(V_i)}{{\Vol}(V_i)} -1  \leq C \e^{\frac{1}{16}}.$$
The other direction of the claim is proved in the same way.
\end{proof}

\begin{lemma} \label{integral}
Let $(\CL+\Cd)\e^{\frac18}<\rho<\inj(M)/2$.
For $i=1,\cdots,I_L$ such that $d(x_i,p)\geq r_0+2\rho+\rho_1$,
one can approximate $\int_{B(x_i,\rho)} \phi$ with
$$ \sum_{j: \, \h{d}_{ij}<\rho}  (\h{\phi}(x_j))^{-1} \int_{V_j} \phi^2= \sum_{j: \, \h{d}_{ij}<\rho} {\Vol}^a(V_j) \h{\phi}(x_j),$$
and the error is uniformly bounded by $C(n,D,K,\CL,\Cd, \ceigen,\|\phi\|_{C^{0,1}}) \, \e^{\frac{1}{16}}$.
\end{lemma}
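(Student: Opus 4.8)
The plan is as follows. The stated identity $\h{\phi}(x_j)^{-1}\int_{V_j}\phi^2={\Vol}^a(V_j)\,\h{\phi}(x_j)$ is just the definition of ${\Vol}^a(V_j)$ from Lemma~\ref{volume-small}, so the content is to show that
\[
S_i:=\sum_{j:\,\h{d}_{ij}<\rho}\h{\phi}(x_j)^{-1}\int_{V_j}\phi^2
\]
approximates $\int_{B(x_i,\rho)}\phi$ within $C\e^{1/16}$. I would do this in two steps: on each slice $V_j$ replace the integrand $\h{\phi}(x_j)^{-1}\phi^2$ by $\phi$, and then identify $\sum_j\int_{V_j}\phi=\int_{\cup_jV_j}\phi$ (the union is disjoint by Proposition~\ref{partition-L}(1)) as an approximation of the ball integral.

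First I would record a slice sandwich analogous to the inclusion \eqref{partition-gap}. Set $a:=\Cd\e^{1/8}+\CL\e$. Using Proposition~\ref{partition-L} together with the hypotheses $\rho>(\CL+\Cd)\e^{1/8}$ and $d(x_i,p)\ge r_0+2\rho+\rho_1$, one gets
\[
B\big(x_i,\rho-a\big)\ \subset\ \bigcup_{j:\,\h{d}_{ij}<\rho}V_j\ \subset\ B\big(x_i,\rho+a\big),
\]
by exactly the argument in the proof of Lemma~\ref{pointwise}: if $\h{d}_{ij}<\rho$ then $d(x_i,x_j)<\rho+\Cd\e^{1/8}$ and $V_j\subset B(x_j,\CL\e)$, which gives the right-hand inclusion; conversely any $\xi$ in the smaller ball has $d(\xi,p)>r_0$ (here the assumption on $d(x_i,p)$ enters), so $\xi\in V_k$ for some $k$ by Proposition~\ref{partition-L}(1), and then $d(x_i,x_k)<\rho-\Cd\e^{1/8}$ forces $\h{d}_{ik}<\rho$. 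The same bookkeeping shows every $j$ in the sum satisfies $d(x_j,p)\ge r_0+\rho_1$, using $\rho>(\CL+\Cd)\e^{1/8}$; hence $\h{\phi}(x_j)$ is defined and, for $\e$ small, $\h{\phi}(x_j)\ge\ceigen/2$ by Lemma~\ref{pointwise}.

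Next, for each such $j$ I would bound
\[
\Big|\h{\phi}(x_j)^{-1}\int_{V_j}\phi^2-\int_{V_j}\phi\Big|
=\Big|\int_{V_j}\phi\cdot\frac{\phi-\h{\phi}(x_j)}{\h{\phi}(x_j)}\Big|
\le\frac{2\|\phi\|_{C^0}}{\ceigen}\sup_{V_j}\big|\phi-\h{\phi}(x_j)\big|\,\Vol(V_j),
\]
and since $V_j\subset B(x_j,\CL\e)$, we have $\sup_{V_j}|\phi-\h{\phi}(x_j)|\le\|\phi\|_{C^{0,1}}\CL\e+|\phi(x_j)-\h{\phi}(x_j)|\le C\e^{1/16}$ by Lemma~\ref{pointwise}. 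Summing over $j$, using that the $V_j$ are disjoint and contained in $B(x_i,\rho+a)$ whose volume is bounded by a constant depending on $n,K,D$ (as $\rho<\inj(M)/2\le D$), the total replacement error is $\le C\e^{1/16}$, i.e. $\big|S_i-\int_{\cup_jV_j}\phi\big|\le C\e^{1/16}$. Finally $\int_{\cup_jV_j}\phi$ differs from $\int_{B(x_i,\rho)}\phi$ by at most $\|\phi\|_{C^0}$ times the volume of the symmetric difference of $\cup_jV_j$ and $B(x_i,\rho)$; by the slice sandwich this symmetric difference lies in an annulus of width $2a=O(\e^{1/8})$, which by volume comparison has volume $\le C(n,K)\rho^{n-1}\e^{1/8}\le C(n,K,D)\e^{1/8}\le C\e^{1/16}$. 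Combining the two estimates by the triangle inequality proves the claim, with constant depending only on $n,D,K,\CL,\Cd,\ceigen,\|\phi\|_{C^{0,1}}$.

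I do not expect a genuine obstacle; the only care needed is the bookkeeping of the three scales $\e$, $\e^{1/8}$, $\e^{1/16}$, and checking that every error term's $\rho$-dependence is absorbed through $\rho<\inj(M)/2\le D$, so that the final bound is uniform in $\rho$ and over the class $\M(n,D,K,K_1,v_0)$. One should also note that the hypothesis $(\CL+\Cd)\e^{1/8}<\rho$ is exactly what keeps the inner ball $B(x_i,\rho-a)$ nonempty and the summation points $x_j$ inside the region $\{d(\cdot,p)\ge r_0+\rho_1\}$ where $\h{\phi}$ is defined.
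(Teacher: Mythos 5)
Your proof is correct and follows essentially the same structure as the paper's (slice-by-slice error estimate plus a ball/union-of-slices sandwich), with one minor simplification worth noting.

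The paper's proof inserts an intermediate quantity $\sum_j \int_{V_j}\h\phi(x_j)$: it first replaces $\Vol^a(V_j)\h\phi(x_j)$ by $\Vol(V_j)\h\phi(x_j)$ using Lemma~\ref{volume-small}, then replaces the constant $\h\phi(x_j)$ by $\phi$ on each slice using Lemma~\ref{pointwise}. You collapse these two steps into one by writing
$\h\phi(x_j)^{-1}\int_{V_j}\phi^2 - \int_{V_j}\phi = \int_{V_j}\phi\,\bigl(\phi-\h\phi(x_j)\bigr)/\h\phi(x_j)$
and bounding it directly with Lemma~\ref{pointwise} and the Lipschitz bound on $\phi$, so Lemma~\ref{volume-small} is never invoked. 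This is a modest shortening, not a different route: the resulting bound has the same constant dependence and the same order $\e^{1/16}$, and the final sandwich argument (your re-derivation of \eqref{partition-gap} with $\rho$ in place of $\rho_1$, valid thanks to $d(x_i,p)\ge r_0+2\rho+\rho_1$ and $\rho>(\CL+\Cd)\e^{1/8}$) is exactly the one the paper uses.
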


\begin{proof}
First, note that for any $j$ such that $\h{d}_{ij}<\rho$, we see that $d(x_i,x_j)<\rho+\Cd \e^{\frac18}<2\rho$ by Proposition \ref{partition-L}. This gives $d(x_j,p)>r_0+\rho_1$, so $\h{\phi}(x_j)$ and $\Vol^a(V_j)$ are defined.
Using Lemma \ref{volume-small} and the fact that $V_j$ are disjoint by Proposition \ref{partition-L}(1), we have
\begin{eqnarray*}
\Big| \sum_{j: \, \h{d}_{ij}<\rho} \int_{V_j} \h{\phi}(x_j) - \sum_{j: \, \h{d}_{ij}<\rho} {\Vol}^a(V_j) \h{\phi}(x_j) \Big| &\leq&  \sum_{j: \, \h{d}_{ij}<\rho} |{\Vol}^a(V_j)-{\Vol}(V_j)| \h{\phi}(x_j) \\
&\leq& C\e^{\frac{1}{16}} \sum_{j: \, \h{d}_{ij}<\rho} {\Vol}(V_j) \h{\phi}(x_j) \\
&\leq& C\Vol(M) \|\phi\|_{C^0} \, \e^{\frac{1}{16}}.
\end{eqnarray*}
Note that the inequalities above still hold if some of $V_j$ have zero volume, in which case $\Vol^a(V_j)$ is also zero by definition.
On the other hand, by Lemma \ref{pointwise} and $V_j\subset B(x_j, \CHau \e)$ due to Proposition \ref{partition-L}(2),
\begin{eqnarray*}
\Big| \int_{V_j} \h{\phi}(x_j)- \int_{V_j} \phi \Big| &\leq& \Big| \int_{V_j} \h{\phi}(x_j)- \int_{V_j} \phi(x_j) \Big| + \Big| \int_{V_j} \phi(x_j)- \int_{V_j} \phi \Big| \\
&\leq& C\Vol(V_j) \e^{\frac{1}{16}} + \|\phi\|_{Lip}\, \CL \Vol(V_j) \e.
\end{eqnarray*}
Using the fact that $V_j$ are disjoint, we have
$$\Big| \sum_{j: \, \h{d}_{ij}<\rho} \int_{V_j} \h{\phi}(x_j)- \sum_{j: \, \h{d}_{ij}<\rho} \int_{V_j} \phi\Big| \leq C\e^{\frac{1}{16}} \sum_{j: \, \h{d}_{ij}<\rho} \Vol(V_j) \leq C\Vol(M) \,\e^{\frac{1}{16}}.$$
Again, note that these inequalities hold if some of $V_j$ have zero volume.
Finally, using \eqref{partition-gap},
$$\Big| \int_{B(x_i,\rho)} \phi - \sum_{j: \, \h{d}_{ij}<\rho} \int_{V_j} \phi \Big| \leq C \rho^{n-1}\e^{\frac18},$$
where the dependency of constants is the same as in Lemma \ref{pointwise}.
Combining all the estimates above proves the claim.
\end{proof}

\subsection{Approximating the Laplacian of $\phi_1$}

Next, let us approximate the volume of a ball of small radius using Lemma \ref{volume-small}.

\begin{lemma} \label{volume-large}
Let $(\CL+\Cd)\e^{\frac18}<\rho<\inj(M)/2$.
For $i=1,\cdots,I_L$ such that $d(x_i,p)\geq r_0+2\rho+\rho_1$,
we define 
\begin{equation} \label{def-volume-large}
{\Vol}^a \big(B(x_i,\rho) \big):= \sum_{j: \, \h{d}_{ij}<\rho} {\Vol}^a(V_j).
\end{equation}
Then for sufficiently small $\e$, we have
$$\Big| {\Vol}^a(B(x_i,\rho))- {\Vol}(B(x_i,\rho)) \Big|  \leq C(\rho^n \e^{\frac{1}{16}} + \rho^{n-1}\e^{\frac18}),$$
where the constant $C$ depends on $n,D,K,\CL,\Cd, \ceigen,\|\phi\|_{C^{0,1}}$.
In particular, if $\rho\geq \rho_1=\e^{\frac{1}{16}}$, then ${\Vol}^a (B(x_i,\rho))$ is bounded below by $\nu_n \rho^n/4$ for sufficiently small $\e$.
\end{lemma}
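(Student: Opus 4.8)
Since by \eqref{def-volume-large} the quantity ${\Vol}^a(B(x_i,\rho))$ is just $\sum_{j:\,\h{d}_{ij}<\rho}{\Vol}^a(V_j)$, the plan is a two-step comparison: first replace every ${\Vol}^a(V_j)$ by the true cell volume ${\Vol}(V_j)$ via the multiplicative estimate of Lemma \ref{volume-small}, and then identify $\sum_j{\Vol}(V_j)$ with ${\Vol}(B(x_i,\rho))$ up to a thin-annulus error, using disjointness of the $V_j$ and the sandwich \eqref{partition-gap}. A preliminary observation is that \eqref{partition-gap}, proved in Lemma \ref{pointwise} only for $\rho=\rho_1$, extends verbatim to any $\rho$ with $(\CL+\Cd)\e^{1/8}<\rho<\inj(M)/2$ once $d(x_i,p)\geq r_0+2\rho+\rho_1$: indeed $\h{d}_{ij}<\rho$ forces $d(x_i,x_j)<\rho+\Cd\e^{1/8}<2\rho$ by Proposition \ref{partition-L}(2), hence $d(x_j,p)>r_0+\rho_1$ so that $\h{\phi}(x_j),{\Vol}^a(V_j)$ are defined, and the triangle-inequality chain of Lemma \ref{pointwise} then gives $B(x_i,\rho-\Cd\e^{1/8}-\CL\e)\subset\bigcup_{j:\,\h{d}_{ij}<\rho}V_j\subset B(x_i,\rho+\Cd\e^{1/8}+\CL\e)$.

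\emph{First comparison.} By Lemma \ref{volume-small} (and its remark on cells of zero volume), $|{\Vol}^a(V_j)-{\Vol}(V_j)|\leq C\e^{1/16}{\Vol}(V_j)$ for every $j$ in the sum. Using that the $V_j$ are pairwise disjoint (Proposition \ref{partition-L}(1)), then the second inclusion above together with $\Cd\e^{1/8}+\CL\e<\rho$, and the volume bound ${\Vol}(B(x,r))\leq C(n,D,K)r^n$, I would obtain
$$\Big|{\Vol}^a(B(x_i,\rho))-{\Vol}\Big(\bigcup_{j:\,\h{d}_{ij}<\rho}V_j\Big)\Big|\leq C\e^{\frac{1}{16}}\sum_{j:\,\h{d}_{ij}<\rho}{\Vol}(V_j)=C\e^{\frac{1}{16}}{\Vol}\Big(\bigcup_{j:\,\h{d}_{ij}<\rho}V_j\Big)\leq C\rho^n\e^{\frac{1}{16}}.$$

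\emph{Second comparison and conclusion.} By the sandwich, both ${\Vol}(\bigcup_jV_j)$ and ${\Vol}(B(x_i,\rho))$ lie between ${\Vol}(B(x_i,\rho-\Cd\e^{1/8}-\CL\e))$ and ${\Vol}(B(x_i,\rho+\Cd\e^{1/8}+\CL\e))$, so their difference is bounded by that annular volume, which under $|{\rm Sec}(M)|\leq K^2$ and for radii $<\inj(M)$ is $\leq C(n,D,K)\rho^{n-1}(\Cd\e^{1/8}+\CL\e)\leq C\rho^{n-1}\e^{1/8}$, via the sphere-area bound $\mathcal{H}^{n-1}(\partial B(x,r))\leq C(n,D,K)r^{n-1}$. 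Adding this to the first comparison gives the stated estimate, with $C$ depending on $n,D,K,\CL,\Cd,\ceigen,\|\phi\|_{C^{0,1}}$ (the last three through Lemma \ref{volume-small}). For the final assertion, when $\rho\geq\rho_1=\e^{1/16}$ we have $\rho^n\e^{1/16}\leq\e^{1/16}\rho^n$ and $\rho^{n-1}\e^{1/8}\leq(\e^{1/8}/\rho)\rho^n\leq\e^{1/16}\rho^n$, so the total error is $\leq C\e^{1/16}\rho^n$; combined with ${\Vol}(B(x_i,\rho))\geq\nu_n\rho^n-C(n,K)\rho^{n+2}\geq\tfrac{1}{2}\nu_n\rho^n$ (the small-radius volume estimate already used in Lemma \ref{pointwise}) this gives ${\Vol}^a(B(x_i,\rho))\geq\tfrac{1}{4}\nu_n\rho^n$ for $\e$ small.

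\emph{Main obstacle.} The one point that needs care is that the number of cells $V_j$ meeting $B(x_i,\rho)$ may be as large as $I_L\sim(D/\e)^L$, polynomially large in $\e^{-1}$, so a naive bound on the accumulated error by (number of cells)$\times$(per-cell error) would diverge. The remedy, built into the first comparison, is to use the \emph{relative} form of Lemma \ref{volume-small} rather than an absolute one: together with disjointness, $\sum_j{\Vol}(V_j)$ then collapses to ${\Vol}(\bigcup_jV_j)\lesssim\rho^n$, keeping the total error proportional to $\rho^n\e^{1/16}$ irrespective of the cell count.
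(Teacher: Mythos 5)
Your proof is correct and follows essentially the same route as the paper: replace $\Vol^a(V_j)$ by $\Vol(V_j)$ via the relative estimate of Lemma \ref{volume-small}, collapse the sum using disjointness of the $V_j$, and compare with $\Vol(B(x_i,\rho))$ through the sandwich \eqref{partition-gap}. The paper compresses this into a single upper-bound chain (with the lower bound declared symmetric), while you split it into two labeled comparisons and spell out the extension of \eqref{partition-gap} to general $\rho$ and the lower-bound assertion; these are clarifications of details the paper leaves implicit, not a different argument.
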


\begin{proof}
Using Lemma \ref{volume-small}, \eqref{partition-gap} and the fact that $V_j$ are disjoint, we have
\begin{eqnarray*}
{\Vol}^a(B(x_i,\rho))&\leq& (1+C\e^{\frac{1}{16}})\sum_{j: \, \h{d}_{ij}<\rho_1} {\Vol}(V_j) \\
&=& (1+C\e^{\frac{1}{16}}) {\Vol}(\bigcup_{j: \, \h{d}_{ij}<\rho_1} V_j) \\
&\leq& (1+C\e^{\frac{1}{16}}) {\Vol}\big(B(x_i, \rho+ C \e^{\frac18}) \big).
\end{eqnarray*}
Hence,
$$ {\Vol}^a(B(x_i,\rho))- {\Vol}(B(x_i,\rho))  \leq C\rho^n \e^{\frac{1}{16}} + C\rho^{n-1}\e^{\frac18}.$$
The other direction of the claim is proved in the same way.
\end{proof}

Lemma \ref{volume-large} has the following variation.

\begin{coro} \label{volume-1}
Choosing $\rho=\rho_2=\e^{\frac{1}{64n}}$ in Lemma \ref{volume-large}, then
$$\Big| \frac{1}{{\Vol}^a(B(x_i,\rho_2))}- \frac{1}{{\Vol}(B(x_i,\rho_2))} \Big|  \leq C(n,D,K,\CL,\Cd, \ceigen,\|\phi\|_{C^{0,1}}) \,\e^{\frac{1}{32}}.$$
\end{coro}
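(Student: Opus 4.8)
The plan is to derive Corollary \ref{volume-1} directly from Lemma \ref{volume-large} together with elementary two-sided volume bounds, via the identity $\big|\tfrac1a-\tfrac1b\big|=\tfrac{|a-b|}{ab}$ applied with $a:={\Vol}^a(B(x_i,\rho_2))$ and $b:={\Vol}(B(x_i,\rho_2))$.

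First I would check that $\rho=\rho_2=\e^{\frac1{64n}}$ is an admissible choice in Lemma \ref{volume-large} once $\e$ is small: since $n\ge 2$ we have $\tfrac1{64n}<\tfrac18$, so $(\CL+\Cd)\e^{\frac18}<\rho_2<\inj(M)/2$ for $\e$ small enough; and $\tfrac1{64n}\le\tfrac1{16}$ gives $\rho_2\ge\rho_1=\e^{\frac1{16}}$, which is precisely the regime in which Lemma \ref{volume-large} also yields the lower bound ${\Vol}^a(B(x_i,\rho_2))\ge \nu_n\rho_2^n/4$. (The restriction $d(x_i,p)\ge r_0+2\rho_2+\rho_1$ on the admissible indices $i$ is inherited from Lemma \ref{volume-large}.) With this choice $\rho_2^n=\e^{\frac1{64}}<1$ and $\rho_2^{n-1}<1$, so Lemma \ref{volume-large} gives
\[
|a-b|\ \le\ C\big(\rho_2^n\e^{\frac1{16}}+\rho_2^{n-1}\e^{\frac18}\big)\ \le\ C\,\e^{\frac1{16}},
\]
where I also used $\e^{\frac18}\le\e^{\frac1{16}}$.

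Next I would bound $ab$ from below uniformly. The inequality ${\Vol}^a(B(x_i,\rho_2))\ge\nu_n\rho_2^n/4$ just recalled controls $a$. For $b$, the two-sided sectional curvature bound yields $|{\Vol}(B(x,\rho))-\nu_n\rho^n|\le C(n,K)\rho^{n+2}$ for $\rho<\inj(M)$ — this is exactly the estimate already used in the proof of Lemma \ref{pointwise} — hence $b\ge\tfrac12\nu_n\rho_2^n$ for small $\e$. Consequently $ab\ge\tfrac18\nu_n^2\rho_2^{2n}=\tfrac18\nu_n^2\,\e^{\frac1{32}}$, and therefore
\[
\Big|\frac1{{\Vol}^a(B(x_i,\rho_2))}-\frac1{{\Vol}(B(x_i,\rho_2))}\Big|=\frac{|a-b|}{ab}\ \le\ \frac{C\,\e^{\frac1{16}}}{\tfrac18\nu_n^2\,\e^{\frac1{32}}}\ =\ C'\,\e^{\frac1{32}},
\]
with $C'$ depending on $n,D,K,\CL,\Cd,\ceigen,\|\phi\|_{C^{0,1}}$, as claimed.

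There is essentially no obstacle here: the only points that need care are verifying that $\rho_2$ lies in the admissible range of Lemma \ref{volume-large} for all sufficiently small $\e$, and ensuring the denominator $ab$ in $|a-b|/(ab)$ does not decay faster than $\e^{\frac1{32}}$ — both of which follow at once from the lower volume estimates above. One could instead use the sharper bound $|a-b|\le C\e^{\frac5{64}}$ coming from Lemma \ref{volume-large}, which would give $|a^{-1}-b^{-1}|\le C\e^{\frac3{64}}\le C\e^{\frac1{32}}$; the cruder estimate is already enough.
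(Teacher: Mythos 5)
Your proof is correct and is essentially the same argument that the paper sketches: write $|a^{-1}-b^{-1}|=|a-b|/(ab)$, bound the numerator by Lemma \ref{volume-large} and the denominator below by $c\,\rho_2^{2n}$ using the $\nu_n\rho_2^n/4$ lower bound on ${\Vol}^a$ and the comparison estimate for ${\Vol}(B(x_i,\rho_2))$, then substitute $\rho_2=\e^{1/64n}$. The paper compresses this to the one-line estimate $C\rho_2^{-2n}(\rho_2^n\e^{1/16}+\rho_2^{n-1}\e^{1/8})\le C\rho_2^{-(n+1)}\e^{1/16}$; you simply bound $\rho_2^n,\rho_2^{n-1}\le 1$ first, which is slightly cruder but lands at the same $\e^{1/32}$, and your checks that $\rho_2$ lies in the admissible range are correct.
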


\begin{proof}
This is a straightforward computation from Lemma \ref{volume-large} with the specific choice of $\rho=\rho_2$.
Indeed, the left-hand side is bounded by $C\rho_2^{-2n}(\rho_2^n \e^{\frac{1}{16}}+\rho_2^{n-1}\e^{\frac18})\leq C\rho_2^{-(n+1)} \e^{\frac{1}{16}}$. Then taking $\rho_2=\e^{\frac{1}{64n}}$ gives the estimate.
\end{proof}

Finally, let us define the graph Laplacian.

\begin{definition}[Graph Laplacian] \label{def-graph-Lap}
Let $(\CL+\Cd)\e^{\frac18}<\rho<\inj(M)/2$.
For $i=1,\cdots,I_L$ such that $d(x_i,p)\geq r_0+2\rho+\rho_1$,
we define
\begin{equation}
(\Delta_X \h{\phi})(x_i) := \frac{2(n+2)}{{\Vol}^a(B(x_i,\rho)) \rho^{2}} \Big(\sum_{j: \, \h{d}_{ij}<\rho}  (\h{\phi}(x_j))^{-1} \int_{V_j} \phi^2 \Big) -\frac{2(n+2)}{\rho^2} \h{\phi}(x_i),
\end{equation}
where $\h{\phi}(x_j)$ is defined in \eqref{def-phihat}, ${\Vol}^a(B(x_i,\rho))$ is defined in \eqref{def-volume-large}, and the numbers $\h{d}_{ij}$ are determined in Proposition \ref{partition-L}.
\end{definition}

\begin{proposition} \label{appro-Lap}
Let $(M,g) \in \M(n,D,K,K_2,v_0)$. 
Set $\rho=\rho_2=\e^{\frac{1}{64n}}$ in Definition \ref{def-graph-Lap}.
Then for $i=1,\cdots,I_L$ such that $d(x_i,p)\geq r_0+3\rho_2$, we have
$$\big| (\Delta_X \h{\phi})(x_i) - (\Delta_g \phi)(x_i) \big| \leq  \CLap \, \e^{\frac{1}{80n}},$$
where the constant $\CLap$ depends on $n,D,K,K_2, v_0,\CL,\Cd, \ceigen, \|q\|_{C^{0,1}}$.
\end{proposition}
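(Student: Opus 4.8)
The plan is to compare the graph Laplacian $(\Delta_X\h\phi)(x_i)$ to the continuum averaging operator
$$
(\Delta_g^\rho \phi)(x_i):=\frac{2(n+2)}{\Vol(B(x_i,\rho))\,\rho^2}\Big(\int_{B(x_i,\rho)}\phi - \Vol(B(x_i,\rho))\,\phi(x_i)\Big),
$$
and then to compare $\Delta_g^\rho\phi$ to the true Laplacian $\Delta_g\phi$ via the Taylor expansion of $\phi$ in geodesic normal coordinates at $x_i$. So the proof splits into two estimates plus the geometric input from the previous section; I would carry them out in that order.

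\textbf{Step 1: discretization error.} Using Lemma \ref{integral}, the sum $\sum_{j:\h d_{ij}<\rho}(\h\phi(x_j))^{-1}\int_{V_j}\phi^2$ approximates $\int_{B(x_i,\rho)}\phi$ up to $C\e^{1/16}$; using Lemma \ref{pointwise}, $\h\phi(x_i)$ approximates $\phi(x_i)$ up to $C\e^{1/16}$; using Corollary \ref{volume-1}, $1/\Vol^a(B(x_i,\rho_2))$ approximates $1/\Vol(B(x_i,\rho_2))$ up to $C\e^{1/32}$, and by Lemma \ref{volume-large} the denominator $\Vol^a(B(x_i,\rho_2))$ is bounded below by $\nu_n\rho_2^n/4$. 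Substituting these into Definition \ref{def-graph-Lap}, the prefactor $2(n+2)/\rho_2^2 = 2(n+2)\e^{-1/32n}$ multiplies errors of size $\e^{1/16}$ coming from the numerator and $\e^{1/32}$ from the volume reciprocal against a bounded quantity; the net error is of order $\rho_2^{-2}\cdot\rho_2^{-n}\cdot\e^{1/16}$ type terms plus lower-order ones, which one checks is $\leq C\e^{1/80n}$ for the chosen exponents. The domain restriction $d(x_i,p)\ge r_0+3\rho_2$ guarantees all the auxiliary lemmas apply (it implies $d(x_i,p)\ge r_0+2\rho_2+\rho_1$ since $\rho_1<\rho_2$). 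This step is essentially bookkeeping of the already-established uniform estimates.

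\textbf{Step 2: continuum averaging error.} Here I would write $\phi$ in geodesic normal coordinates centered at $x_i$, which exist on $B(x_i,\rho_2)$ since $\rho_2<\inj(M)$, and expand
$$
\phi(\exp_{x_i}(v)) = \phi(x_i) + \langle\nabla\phi(x_i),v\rangle + \tfrac12\,\mathrm{Hess}\,\phi(x_i)(v,v) + R(v),
$$
with remainder $|R(v)|\le C\|\phi\|_{C^{2,1}}|v|^3$ (or a $C^3$ bound). Integrating over $B(x_i,\rho_2)$: the linear term vanishes by symmetry up to the metric distortion, which in normal coordinates is $O(|v|^2)$ — this is precisely the point emphasized in the introduction about normal versus harmonic coordinates — so the odd term contributes only $O(\rho_2^{n+3})$; the quadratic term produces $\tfrac{1}{2(n+2)}\Delta_g\phi(x_i)\,\Vol(B(x_i,\rho_2))\rho_2^2$ to leading order (using $\mathrm{tr}\,\mathrm{Hess}\,\phi=\Delta_g\phi$ and the Euclidean second-moment identity $\int_{B(0,\rho)}v_kv_l = \frac{\nu_n\rho^{n+2}}{n+2}\delta_{kl}$), with corrections of order $\rho_2^{n+4}$ from curvature and metric distortion; and the cubic remainder contributes $O(\rho_2^{n+3})$. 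Dividing by $\Vol(B(x_i,\rho_2))\rho_2^2/(2(n+2))\sim \nu_n\rho_2^{n+2}$ gives $(\Delta_g^\rho\phi)(x_i) = (\Delta_g\phi)(x_i) + O(\rho_2)$. Since $\rho_2=\e^{1/64n}$, this is $O(\e^{1/64n})\le C\e^{1/80n}$. Uniform control of the $C^{2,1}$ (or $C^3$) norm of $\phi_1$, together with the uniform second-moment geometry of small balls, is exactly why the hypotheses \eqref{bound-C2} on $\nabla R$ and $\nabla^2 R$ are imposed, as flagged after the statement of Theorem \ref{main1}.

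\textbf{Main obstacle.} I expect the hard part to be Step 2, specifically obtaining the \emph{uniform} bound on the third-order Taylor remainder of $\phi_1$ in geodesic normal coordinates across the whole class $\M(n,D,K,K_2,v_0)$. One needs uniform $C^3$-type (or $C^{2,1}$) a priori bounds on $\phi_1$, which follow from elliptic regularity $(-\Delta_g+q)\phi_1=\lambda_1\phi_1$ together with uniform bounds on $\lambda_1$ (from \eqref{bound-lambda1}), on $\|\phi_1\|_{L^\infty}$, on $\|q\|_{C^{0,1}}$, and crucially on the metric coefficients and their derivatives in normal coordinates — the latter controlled by the curvature-derivative bounds \eqref{bound-C2} via the Jacobi equation, which is what upgrades the $C^{1,\alpha}$-precompactness of $\M(n,D,K,v_0)$ to enough regularity for a second-order expansion. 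Once these uniform bounds are in hand, balancing the exponents ($\rho_2^{-2}$ against $\e^{1/16}$ in Step 1, and $\rho_2$ in Step 2) to land below $\e^{1/80n}$ is routine; the constant $\CLap$ then depends on $n,D,K,K_2,v_0,\CL,\Cd,\ceigen,\|q\|_{C^{0,1}}$ as claimed, the dependence on $\|q\|_{C^{0,1}}$ entering through the elliptic estimate for $\phi_1$.
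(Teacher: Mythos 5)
Your decomposition matches the paper's exactly: first replace the graph Laplacian by the continuum averaging operator $(\Delta_g^\rho\phi)(x_i)$ using Lemma \ref{integral}, Lemma \ref{pointwise}, Corollary \ref{volume-1} and Lemma \ref{volume-large}, and then compare $(\Delta_g^\rho\phi)(x_i)$ to $(\Delta_g\phi)(x_i)$ by Taylor expansion in geodesic normal coordinates, isolating the first-order term against $J_x(v)-1=O(|v|^2)$ and the second-moment identity. Step 1 and the geometric input are fine.

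There is, however, a genuine gap in Step 2. You bound the Taylor remainder by $C\|\phi\|_{C^{2,1}}|v|^3$ (or appeal to a $C^3$ bound) and then claim that uniform $C^{2,1}$ or $C^3$ control of $\phi_1$ ``follows from elliptic regularity'' together with $\|q\|_{C^{0,1}}\leq \Cq$. This does not follow. With $q\in C^{0,1}$ only, the equation $-\Delta_g\phi_1=(\lambda_1-q)\phi_1$ has a right-hand side that is at best Lipschitz, and Schauder theory at the integer exponent $\alpha=1$ breaks down: one gets $\phi_1\in C^{2,\alpha}$ for every $\alpha<1$, but not $C^{2,1}$, and certainly not $C^3$ (which would require $q\in C^{1,\alpha}$). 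The curvature-derivative bounds \eqref{bound-C2} control the metric tensor in normal coordinates, but they cannot upgrade the regularity of $q$, which is the limiting factor. Consequently a remainder bound of $|v|^3$ is not available uniformly over the class, and the error $O(\rho_2)=O(\e^{1/64n})$ you derive is not justified.

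The paper resolves exactly this issue by using the $C^{2,\alpha}$ Taylor remainder $|R_2(v)|\leq C\|\tilde\phi\|_{C^{2,\alpha}}|v|^{2+\alpha}$ for $\alpha<1$, which is all that Schauder gives under $q\in C^{0,1}$. After integrating and normalizing this contributes $O(\rho_2^\alpha)$ rather than $O(\rho_2)$, and the paper chooses $\alpha=4/5$ so that $\rho_2^\alpha=\e^{4/(5\cdot 64n)}=\e^{1/80n}$; this is precisely why the stated exponent is $1/80n$ and not $1/64n$. If you replace your third-order remainder by the $C^{2,\alpha}$ remainder with $\alpha=4/5$ (and correspondingly drop the claim of a uniform $C^{2,1}$/$C^3$ bound on $\phi_1$), your argument closes and coincides with the paper's.
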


\begin{proof}
First, we show that $(\Delta_X \h{\phi})(x_i)$ with $\rho=\rho_2$ approximates
\begin{equation} \label{eq-Lap}
\frac{2(n+2)}{{\Vol}(B(x_i,\rho_2)) \rho_2^{2}} \int_{B(x_i,\rho_2)} \phi -\frac{2(n+2)}{\rho_2^2} \phi(x_i)
= \frac{2(n+2)}{{\Vol}(B(x_i,\rho_2)) \rho_2^{2}} \int_{B(x_i,\rho_2)} \big( \phi(y) -\phi(x_i) \big) dy,
\end{equation}
with uniform error of order $\e^{\frac{1}{64}}$ for all $i$.
This directly follows from previous lemmas.
More precisely, replacing $1/{\Vol}^a(B(x_i,\rho_2))$ with $1/{\Vol}(B(x_i,\rho_2))$ introduces error of order $\e^{\frac{1}{32}}\rho_2^{-2}\leq \e^{\frac{1}{64}}$ by Corollary \ref{volume-1}. Replacing $\sum_{j: \, \h{d}_{ij}<\rho_2}  (\h{\phi}(x_j))^{-1} \int_{V_j} \phi^2$ with $\int_{B(x_i,\rho_2)} \phi$ introduces error of order $\e^{\frac{1}{16}}\rho_2^{-(n+2)}\leq \e^{\frac{1}{32}}$ by Lemma \ref{integral}. Replacing $\h{\phi}(x_i)$ with $\phi(x_i)$ introduces error of order $\e^{\frac{1}{16}}\rho_2^{-2}\leq \e^{\frac{1}{32}}$ by Lemma \ref{pointwise}.

Next, we show that \eqref{eq-Lap} approximates the Laplacian $\Delta_g \phi$ at $x_i$. 
Let $x\in M$ be arbitrary. In geodesic normal coordinate,
\begin{eqnarray*}
\frac{2(n+2)}{{\Vol}(B(x,\rho_2)) \rho_2^{2}} \int_{B(x,\rho_2)} \big( \phi(y) -\phi(x) \big) dy = \frac{2(n+2)}{{\Vol}(B(x,\rho_2)) \rho_2^{2}} \int_{\mathcal{B}(0,\rho_2)} \big( \phi(\exp_{x} v) -\phi(x) \big) J_x (v) dv,
\end{eqnarray*}
where $\mathcal{B}(0,\rho_2)$ is the ball in the tangent space $T_x M$, $\exp_x$ is the Riemannian exponential map at $x$, and $J_x(v)$ is the Jacobian of the exponential map at $v\in \mathcal{B}(0,\rho_2)$.
Let us denote
\begin{equation}
\tilde{\phi}=\phi \circ \exp_x : \mathcal{B}(0,\rho_2)\to \R. 
\end{equation}
The function $\tilde{\phi}$ has uniformly bounded $C^{2,\alpha}$-norm for any $\alpha\in (0,1)$. 
Indeed, under the additional curvature bounds \eqref{bound-C2}, there exists a uniform radius in which the components of the metric tensor in geodesic normal coordinates at any point have uniformly bounded $C^{2}$-norm, see e.g. \cite[Lemma 8]{HV}.
Using the form of Laplacian in local coordinates, $q\in C^{0,1}(M)$ and $\lambda_1 \leq \|q\|_{C^0}$, the interior H\"older estimate holds for $\tilde{\phi}$ by \cite[Thm. 8.24]{GT}.
Then the Schauder estimate gives an estimate for the $C^{2,\alpha}$-norm of $\tilde{\phi}$ for any $\alpha\in (0,1)$, see e.g. \cite[Thm 6.6]{GT}.
Now we write the function $\tilde{\phi}$ in Taylor expansion,
\begin{eqnarray*}
\int_{\mathcal{B}(0,\rho_2)} \big( \phi(\exp_{x} v) -\phi(x) \big) J_x (v) dv &=&
\int_{\mathcal{B}(0,\rho_2)} \big( \tilde{\phi}(v) -\tilde{\phi}(0) \big) J_x (v) dv \\
&=&\int_{\mathcal{B}(0,\rho_2)} \Big( \nabla \tilde{\phi} (0) \cdot v + \frac12 \nabla^2 \tilde{\phi} (0) v\cdot v + R_2(v)  \Big) J_x (v) dv,
\end{eqnarray*}
where $R_2(v)$ is the reminder which is uniformly bounded by $C(n) \|\tilde{\phi}\|_{C^{2,\alpha}} \rho_2^{2+\alpha}$.

For the first-order term, since $\int_{\mathcal{B}(0,\rho_2)} \nabla \tilde{\phi}(0) \cdot v dv=0$ due to symmetry, then replacing $J_x(v)$ by $J_x(v)-1$, we see that
$$\big|\int_{\mathcal{B}(0,\rho_2)} \nabla \tilde{\phi}(0) \cdot v J_x(v) dv \big|= \big|\int_{\mathcal{B}(0,\rho_2)} \nabla \tilde{\phi}(0) \cdot v (J_x(v)-1) dv \big| \leq C\rho_2^{n+3},$$
where we have used the Jacobian estimate $|J_x(v)-1|\leq C(n,K)|v|^2$ for geodesic normal coordinates.
We remark that this higher order of $|v|^2$ is crucial for our estimates, which is not available in harmonic coordinates. This is why we have to sacrifice additional regularity conditions on the curvature tensor.
The second-order term gives the Laplacian of $\tilde{\phi}$ at the origin of $\R^n$ (see e.g. \cite[Sec. 2.3]{BIK}):
$$\int_{\mathcal{B}(0,\rho_2)}  \frac12 \nabla^2\tilde{\phi}(0) v \cdot v  dv = \frac{\nu_n \rho_2^{n+2}}{2(n+2)} (\Delta \tilde{\phi})(0). $$
Fixing $x\in M$ and an orthonormal frame $\{e_1,\cdots,e_n\}$ at $T_x M$,
the Laplacian $(\Delta \tilde{\phi})(0)$ appears here is simply $\sum_{j=1}^n e_j e_j \tilde{\phi}$ at the origin. Since in geodesic normal coordinates the components of the metric tensor satisfy $g_{jk}(0)=\delta_{jk}$ and $\Gamma_{jk}^l (0)=0$ at the center point for all $j,k,l$, then 
\begin{eqnarray*}
(\Delta_g \phi)(x)&=& \sum_{j,k}(g^{jk} \partial_j \partial_k \phi) (x)- \sum_{j,k,l}(g^{jk} \Gamma_{jk}^l \partial_l \phi) (x) \\
&=& \sum_{j=1}^n (\partial_j \partial_j \phi)(x) = \sum_{j=1}^n (e_j e_j \tilde{\phi})(0)=(\Delta \tilde{\phi})(0).
\end{eqnarray*}
Hence, by similarly splitting $J_x(v)$ into $J_x(v)-1+1$ in the second-order term of the Taylor expansion, we have
\begin{eqnarray*}
\Big|\int_{B(x,\rho_2)} \big( \phi(y) -\phi(x) \big) dy- \frac{\nu_n \rho_2^{n+2}}{2(n+2)} (\Delta_g \phi)(x) \Big| \leq C(\|\tilde{\phi}\|_{C^{2,\alpha}})\rho_2^{n+2+\alpha},
\end{eqnarray*}
for any $0<\alpha<1$,
which gives
$$\Big|\frac{2(n+2)}{{\Vol}(B(x,\rho_2)) \rho_2^2}\int_{B(x,\rho_2)} \big( \phi(y) -\phi(x) \big) dy- \frac{\nu_n \rho_2^n}{{\Vol}(B(x,\rho_2)) } (\Delta_g \phi)(x) \Big| \leq C(\|\tilde{\phi}\|_{C^{2,\alpha}})\rho_2^{\alpha}\, .$$
Since
$|\frac{\nu_n \rho_2^n}{{\Vol}(B(x,\rho_2)) }-1| \leq C(n,K)\rho_2^2$, we have
\begin{equation} \label{Lap-appro}
\Big| (\Delta_g \phi)(x)-\frac{2(n+2)}{{\Vol}(B(x,\rho_2)) \rho_2^2}\int_{B(x,\rho_2)} \big( \phi(y) -\phi(x) \big) dy \Big| \leq C(\|\tilde{\phi}\|_{C^{2,\alpha}})\rho_2^{\alpha} + C(\|\Delta_g \phi\|_{C^0}) \rho_2^2\,,
\end{equation}
for any $0<\alpha<1$ and any $x\in M$. 
The $C^{2,\alpha}$-norm of $\tilde{\phi}$ and hence the $C^0$-norm of $\Delta_g \phi$ are uniformly bounded depending on $n,D,K,K_2,v_0,\|q\|_{C^{0,1}}$.
Then taking $x=x_i$ and $\alpha=4/5$ complete the proof.
\end{proof}

\begin{remark} \label{remark-Lap}
The discrete operator defined in Definition \ref{def-graph-Lap} is known as a type of weighted graph Laplacians on proximity graphs studied in e.g. \cite{BIK,BIK2,BIKL,Fuji,Lu}. The points $x_i$ at which the operator is defined form a $\CHau\e$-net of $M\setminus B(p,3r_0/2)$; they are the vertices of the graph. We say two points $x_i,x_j$ are connected by an edge if and only if $\h{d}_{ij}<\rho_2$; they form the edges of the graph. 
Our constructions at the beginning of Section \ref{sec-potential} provide a (disjoint) partition $V_i$ such that $V_i\subset B(x_i,\CHau\e)$.
Then the operator in Definition \ref{def-graph-Lap} is roughly of the form
$$ \frac{C(n)}{\Vol(B(x_i,\rho)) \rho^2} \sum_{j: \, \h{d}_{ij} <\rho} \Vol(V_j) \big( \h{\phi}(x_j)-\h{\phi}(x_i) \big),$$
where $C(n)$ is a normalization constant.
The volumes of $V_j$ and $B(x_i, \rho)$ can be approximated by $\Vol^a(V_j)$ and $\Vol^a(B(x_i,\rho))$ up to uniform error by Lemma \ref{volume-small} and \ref{volume-large}.
We note that one cannot simply use the approximation of $\Vol(B(x_i,\rho))$ by $\nu_n \rho^n$ here, because the error term would be in the second order, the same order as the Laplacian in the Taylor expansion, causing the discrete operator not approximating the integral form \eqref{eq-Lap}.
\end{remark}

\subsection{Approximation by spectral data}

The next step is to argue that approximations of the spectral data determine all relevant quantities in this section with sufficient precision. 
Let $\e>0$ be fixed.
Recall that $V_i$ are of the form $M_{\tau}^*$ for some $\tau\in \mathcal{A}$, where $\tau$ is of the form $(\tau_1,\cdots,\tau_L,0,\cdots,0)$, see the beginning of Section \ref{sec-potential}.
In the same way as Lemma \ref{norm-beta}, one has the following statement.
For any $\sigma\in (0,1)$, there exists sufficiently small $\delta=\delta(\sigma,\e)$, such that 
by knowing a $\delta$-approximation $\{\lambda_j^a,\phi_j^a |_U\}$ of the spectral data of $-\Delta_g +q$ on $U$,
we can compute a number $\appnorm(V_i)$ satisfying
\begin{equation} \label{error-Vi}
\Big| \appnorm(V_i) - \int_{V_i} \phi_1^2 \,\Big| \leq 2^{L}\sigma,
\end{equation}
where $L>1$ is bounded above by a uniform constant.
The key point is that $\sigma$ can be arbitrary small independent of $\e$.
Thus, one can make all relevant quantities in arbitrary small error independent of $\e$.

More precisely, one can take 
\begin{equation} \label{choice-sigma}
\sigma=2^{-L} \e^{4L},
\end{equation}
and find the corresponding $\delta$.
Let $\{x_i\}_{i=1}^{I_L}$ be the set of points stated in Proposition \ref{partition-L}.
We pick a subset of these points such that 
\begin{equation} \label{choice-xi}
\h{d}_{0i} \geq r_0+3\rho_2+\Cd \e^{\frac18},
\end{equation}
where $\rho_2$ is set in \eqref{def-parameters}.
Then Proposition \ref{partition-L}(3) yields that $d(x_i,p)\geq r_0+3\rho_2$.
We claim that the $\CHau\e$-neighborhood of this subset of points covers $M\setminus B(p,3r_0/2)$.
Indeed, for any $y\in M\setminus B(p,3r_0/2)$, there exists $V_k$ such that $y\in V_k$ and $d(y,x_k)<\CHau\e$ for the corresponding point $x_k$ by Proposition \ref{partition-L}. 
Observe that all such $x_k$ must have been picked up by the criterion \ref{choice-xi}, since it satisfies $d(p,x_k)>3r_0/2 - \CHau\e$. Because for points $x_i$ satisfying $\h{d}_{0i} < r_0+3\rho_2+\Cd \e^{\frac18}$, Proposition \ref{partition-L}(3) would yield $d(p,x_i)<r_0+3\rho_2+2\Cd \e^{\frac18}<5r_0/4$, which is a contradiction.

Let us fix the choice of $\sigma$ in \eqref{choice-sigma} and find all $x_i$ satisfying \eqref{choice-xi}.
Examining Lemma \ref{pointwise}, as the numbers $\h{d}_{ij}$ have been determined, one can determine $\h{\phi}(x_i)^2$ up to error of order $(D/\e)^L \e^{4L}\leq C\e^{3L}$. This is because the total number of sets $V_j$ is bounded by $(D/\e)^L$.
Then one determines $\Vol^a(V_i)$ in Lemma \ref{volume-small} up to error of order $\e^{3L}$.
Note that the same statement holds if $\Vol(V_i)=0$ for some $i$.
Hence, one determines $\int_{B(x_i,\rho_2)} \phi $ in Lemma \ref{integral} and $\Vol^a(B(x_i,\rho_2))$ in Lemma \ref{volume-large} up to error of order $(D/\e)^L \e^{3L}\leq C\e^{2L}$.
In particular, this determines $1/\Vol^a(B(x_i,\rho_2))$ up to error of order $\e^{2L}/\rho_2^{2n}< \e^{2L-1}$.
At last, combining all these estimates, we conclude that we can determine the graph Laplacian $(\Delta_X \h{\phi})(x_i)$ in Definition \ref{def-graph-Lap} up to error of order $\e^{2L-2}$.
This is higher order error relative to the estimate obtained in Proposition \ref{appro-Lap}.

\smallskip
Finally, using Proposition \ref{appro-Lap}, Lemma \ref{pointwise}, and the equation $-\Delta_g \phi_1+q \phi_1 =\lambda_1 \phi_1$, we can determine
\begin{equation} \label{def-hqi}
\h{q}_i:= \lambda_1^a + \frac{(\Delta_X \h{\phi})(x_i)}{\h{\phi}(x_i)},
\end{equation}
which approximates $q(x_i)$ up to error of order $\delta+\e^{\frac{1}{80n}}$, in view of Definition \ref{def-data-close}.
Requiring $\delta<\e$ without loss of generality, we have proved the following result.

\begin{proposition} \label{prop-potential}
Let $(M,g) \in \M(n,D,K,K_2,v_0)$ and $U\subset M$ be an open set containing a ball $B(p,2r_0)$ of radius $0<r_0<{\rm inj}(M)/4$.
Let $q$ be a potential function on $M$ satisfying \eqref{bound-q-positive}.
Then for any $\e>0$, there exists $\delta_2>0$, such that 
any $\delta_2$-approximation $\{\lambda_j^a,\phi_j^a |_U\}$ of the spectral data of $-\Delta_g + q$ on $U$ determine the numbers $\h{q}_i$, $i=1,\cdots,I_1$, such that the following holds.

\begin{itemize}
\item[(1)] There exists a set of points $\{x_i\}_{i=1}^{I_1}$ such that
$$\big| \, \h{q}_i - q(x_i) \big| \leq \Cpot \, \e^{\frac{1}{80n}}.$$

\item[(2)] The points satisfy $x_i\in M\setminus B(p,r_0)$ for all $i$, and
the $\CHau\e$-neighborhood of the set $\{x_i\}_{i=1}^{I_1}$ covers $M\setminus B(p,3r_0/2)$.
\end{itemize}
The constant $\Cpot$ depends on $n,D,K,K_2,v_0,r_0,\Cq$, and
the choice of $\delta_2$ depends on the same set of parameters and additionally on $\e$.
\end{proposition}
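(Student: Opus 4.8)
The plan is to assemble the ingredients built up in Section~\ref{sec-potential} and to track how the spectral approximation error propagates through them. The starting point is the eigenvalue equation $-\Delta_g \phi_1 + q\phi_1 = \lambda_1 \phi_1$, which we rewrite pointwise as $q(x_i) = \lambda_1 + (\Delta_g \phi_1)(x_i)/\phi_1(x_i)$; this is meaningful because $\phi_1 \geq \ceigen > 0$ by Lemma~\ref{lemma-eigen-positive}. Hence it suffices to produce, from a $\delta$-approximation of the spectral data, computable numbers that approximate $(\Delta_g \phi_1)(x_i)$ and $\phi_1(x_i)$ up to uniform error, and to control $\lambda_1$ by $\lambda_1^a$ via $|\lambda_1-\lambda_1^a|<\delta$.

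First I would invoke the variant of Lemma~\ref{norm-beta} recorded in \eqref{error-Vi}: a $\delta$-approximation of the spectral data lets one compute, for each cell $V_i=M_\tau^*$, a number $\appnorm(V_i)$ with $|\appnorm(V_i)-\int_{V_i}\phi_1^2|\leq 2^L\sigma$, where the crucial point is that $\sigma$ can be made arbitrarily small independently of $\e$ by shrinking $\delta$. Fixing the choice $\sigma=2^{-L}\e^{4L}$ as in \eqref{choice-sigma} and using that the number of cells is at most $(D/\e)^L$ with $L$ bounded by a uniform constant, I would feed these numbers through the chain Lemma~\ref{pointwise}~$\to$~Lemma~\ref{volume-small}~$\to$~Lemma~\ref{integral}~$\to$~Lemma~\ref{volume-large}~$\to$~Corollary~\ref{volume-1}, checking at each step that the combinatorial factor $(D/\e)^L$ together with the divisions by powers of $\rho_2$ only degrade the error from order $\e^{4L}$ down to order $\e^{2L-2}$. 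Since $L$ is uniformly bounded, this is still far higher order than the $\e^{1/(80n)}$ appearing in Proposition~\ref{appro-Lap}, so $(\Delta_X\h{\phi})(x_i)$ from Definition~\ref{def-graph-Lap} and $\h{\phi}(x_i)$ from \eqref{def-phihat} are determined by the data up to negligible error.

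Next I would restrict to the indices with $\h{d}_{0i}\geq r_0+3\rho_2+\Cd\e^{1/8}$ as in \eqref{choice-xi}; by Proposition~\ref{partition-L}(3) these points satisfy $d(x_i,p)\geq r_0+3\rho_2$, so Proposition~\ref{appro-Lap} applies and yields $|(\Delta_X\h{\phi})(x_i)-(\Delta_g\phi_1)(x_i)|\leq\CLap\,\e^{1/(80n)}$, while Lemma~\ref{pointwise} gives $|\h{\phi}(x_i)-\phi_1(x_i)|\leq C\e^{1/16}$ with $\h{\phi}(x_i)\geq\ceigen/2$. Setting $\h{q}_i:=\lambda_1^a+(\Delta_X\h{\phi})(x_i)/\h{\phi}(x_i)$ as in \eqref{def-hqi} and combining these estimates with $|\lambda_1-\lambda_1^a|<\delta$ and the higher-order determination errors above, a short computation (requiring $\delta<\e$ without loss of generality) yields $|\h{q}_i-q(x_i)|\leq\Cpot\,\e^{1/(80n)}$, which is part~(1). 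For part~(2), Proposition~\ref{partition-L}(1) shows that any $y\in M\setminus B(p,3r_0/2)$ lies in some cell $V_k$ with $d(y,x_k)<\CHau\e$, and such an $x_k$ is necessarily selected by the criterion \eqref{choice-xi}: otherwise Proposition~\ref{partition-L}(3) would force $d(p,x_k)<r_0+3\rho_2+2\Cd\e^{1/8}<5r_0/4$, contradicting $d(p,x_k)>3r_0/2-\CHau\e$.

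The main obstacle, and the one place where genuine care is needed, is the bookkeeping of error propagation under the combinatorial blow-up: every quantity in Section~\ref{sec-potential} is a sum over up to $(D/\e)^L$ cells, and the divisions by small quantities such as $\rho_2^{2n}$, $\rho_2^{n+2}$ and $\h{\phi}(x_i)$ amplify errors, so one must choose $\sigma$ (hence $\delta_2$) polynomially small in $\e$ with exponent scaling with the uniform bound $L$, in order that the ``determined-from-spectrum'' error stays strictly higher order than the structural error $\e^{1/(80n)}$ coming from the Taylor expansion of $\phi_1$ in geodesic normal coordinates in Proposition~\ref{appro-Lap}. A secondary subtlety is that the points $x_i$ are not themselves computable from the data --- only their existence and the approximate distances $\h{d}_{ij}$ are known --- so the conclusion must be phrased as an existence statement, inherited from Proposition~\ref{partition-L}.
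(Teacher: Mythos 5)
Your proposal reproduces the paper's argument essentially step for step: the same chain $\eqref{error-Vi}\to\eqref{choice-sigma}\to$ Lemma~\ref{pointwise} $\to$ Lemma~\ref{volume-small} $\to$ Lemma~\ref{integral} $\to$ Lemma~\ref{volume-large} $\to$ Corollary~\ref{volume-1}, the same selection criterion \eqref{choice-xi} verified via Proposition~\ref{partition-L}(3), the same coverage argument for part~(2), and the same definition \eqref{def-hqi} of $\h{q}_i$ combined with Proposition~\ref{appro-Lap}. The error bookkeeping you highlight (that the $(D/\e)^L$ combinatorial factor and the $\rho_2^{-2n}$ divisions degrade the error only from $\e^{4L}$ to $\e^{2L-2}$, still higher order than $\e^{1/(80n)}$) is exactly the point the paper makes, so the proposal is correct and takes the paper's approach.
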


\section{Proofs of main results} \label{sec-proofs}

Now let us prove the main results.

\begin{proof}[Proof of Theorem \ref{main1}]

The first part of the result is proved in Proposition \ref{prop-metric}.
In Proposition \ref{prop-potential}, we constructed an approximation of the potential $q$ on a $\CHau\e$-net of $M\setminus B(p,3r_0/2)$.
Note that the choices of $\delta_1,\delta_2$ in the reconstruction of metric and potential are different: in fact $\delta_2 \ll \delta_1$.
To prove Theorem \ref{main1}, it suffices to construct an approximation of the potential on a $\CHau\e$-net of $B(p,2r_0)$.
Using the given Riemannian metric on $U$, this can be done directly from \eqref{Lap-appro} and Definition \ref{def-data-close} as follows.

Without loss of generality, assume that the Riemannian metric is given on $B(p,5r_0)\subset U$ with $r_0<\inj(M)/10$.
Denote 
\begin{equation}
\mathcal{L}_{\rho}(x):=\frac{2(n+2)}{{\Vol}(B(x,\rho)) \rho^2}\int_{B(x,\rho)} \big( \phi_1(y) -\phi_1(x) \big) dy,
\end{equation}
where $\phi_1>0$ is the first eigenfunction.
We have proved in \eqref{Lap-appro} that, for any $x\in M$,
\begin{equation}
\big| (\Delta_g \phi_1)(x)-\mathcal{L}_{\rho}(x) \big| \leq \CLap (\alpha) \rho^{\alpha},
\end{equation}
for sufficiently small radius $\rho$ and for any $0<\alpha<1$. 
Let us take $x\in B(p,2r_0)$ and $\alpha=1/2$.
Since we are given the metric on $B(p,5r_0)$, we know all quantities in $\mathcal{L}_{\rho}(x)$ except the values of $\phi_1$.
We can approximate $\mathcal{L}_{\rho}(x)$ using any $\delta$-approximation $\{\lambda_j^a, \phi_j^a|_U \}$ of the spectral data on $U$. Namely, we define
\begin{equation}
\mathcal{L}^a_{\rho}(x):=\frac{2(n+2)}{{\Vol}(B(x,\rho)) \rho^2}\int_{B(x,\rho)} \big( \phi^a_1(y) -\phi^a_1(x) \big) dy,
\end{equation}
and hence by Definition \ref{def-data-close},
$$\big| \mathcal{L}^a_{\rho}(x) - \mathcal{L}_{\rho}(x)\big| \leq C(n)\delta \rho^{-2}, \quad \textrm{ for }x\in B(p,2r_0).$$
Therefore, similar to \eqref{def-hqi}, we can evaluate
\begin{equation}
\h{q}(x) := \lambda_1^a + \frac{\mathcal{L}^a_{\rho}(x)}{\phi_1^a (x)}, \quad \textrm{ for }x\in B(p,2r_0).
\end{equation}
Note that we can require $\delta<c_1/2$ so that $\phi_1^a >c_1/2>0$ on $U$, where $\ceigen$ is the uniform constant determined in Lemma \ref{lemma-eigen-positive}.
Choosing $\delta<\e$ and $\rho=\e^{\frac14}$, we see that
\begin{equation}
| \,\h{q}(x) - q(x) | \leq C(n, \ceigen, \CLap, \|\phi_1\|_{C^0}, \|q\|_{C^0}) \,\e^{\frac18}, \quad \textrm{ for }x\in B(p,2r_0).
\end{equation}
Thus, taking any $\CHau\e$-net of $B(p,2r_0)$ completes the proof of Theorem \ref{main1}.
\end{proof}

\begin{proof}[Proof of Theorem \ref{main-coro}]

Let $\{\lambda_j,\phi_j|_{U_2}\}_{j=1}^{\infty}$ be the spectral data of $-\Delta_{g_2}+q_2$ on $U_2\subset M_2$.
By Theorem \ref{main1}(1),
for any $\e>0$, the finite spectral data $\{\lambda_j, \phi_j |_{U_2}\}_{j=1}^J$ for $J>\delta^{-1}$ determine a Hausdorff approximation $\mathcal{R}^*$ of the interior distance functions on $B_2 (p,r_0/2)\subset M_2$,
which determines a Riemannian manifold $\widehat{M}$ that is diffeomorphic to $M_2$, and the Lipschitz distance between $M_2$ and $\widehat{M}$ is bounded by $C_1\e^{1/12}$.
Let $F:U_1\to U_2$ be the Riemannian isometry. 
By Definition \ref{def-data-close}, the pull-back of spectral data $\{\lambda_j, (F^* \phi_j)|_{U_1}\}_{j=1}^J$ is a $\delta$-approximation of the spectral data of $-\Delta_{g_1}+q_1$ on $U_1\subset M_1$.
The pull-back of the spectral data via isometry $F$ produces the same Hausdorff approximation $\mathcal{R}^*$ of the interior distance functions on $B_1(F^{-1}(p),r_0/2)\subset M_1$, which determines the same Riemannian manifold $\widehat{M}$ that is now diffeomorphic to $M_1$ with Lipschitz distance uniformly bounded by $C_1\e^{1/12}$.
Therefore, $M_1$ is diffeomorphic to $M_2$, with their Lipschitz distance bounded by $2C_1\e^{1/12}$.

Furthermore, by Theorem \ref{main1}(2), the finite spectral data $\{\lambda_j, \phi_j |_{U_2}\}_{j=1}^J$ on $U_2$ and its pull-back $\{\lambda_j, (F^* \phi_j)|_{U_1}\}_{j=1}^J$ on $U_1$ determine the same set of numbers $\{\h{q}_i\}_{i=1}^{I_0}$ such that the following holds. There exist $\CHau\e$-nets $\{y_i\}_{i=1}^{I_0}\subset M_2$ and $\{x_i\}_{i=1}^{I_0} \subset M_1$ satisfying that 
$$|\h{q}_i - q_1(x_i)| \leq C_2 \e^{\frac{1}{80n}},\quad |\h{q}_i - q_2(y_i)| \leq C_2 \e^{\frac{1}{80n}},\quad \textrm{ for all }i=1,\cdots,I_0.$$
Hence, for respective points in the nets, we have
$$| q_1(x_i)- q_2(y_i)| \leq  2C_2 \e^{\frac{1}{80n}}, \quad \textrm{ for all }i=1,\cdots,I_0.$$
Since $\{x_i\}_{i=1}^{I_0}$ is a $\CHau\e$-net in $M_1$, we take any (disjoint) partition $\{\Omega_i\}_{i=1}^{I_0}$ of the manifold such that $\Omega_i\subset B(x_i,\CHau\e)$ for all $i$.
For example, one can take the Voronoi decomposition of $M_1$ with respect to $\{x_i\}_{i=1}^{I_0}$.
Then we define a (non-continuous) map $\Psi:M_1 \to M_2$ by 
\begin{equation} \label{def-Psi}
\Psi(z) :=y_i, \quad \textrm{ if }z\in \Omega_i.
\end{equation}
Hence, for any point $z\in \Omega_i$, since $q_1$ is Lipschitz, we have
$$\big| q_1(z)-(q_2\circ \Psi)(z) \big| \leq |q_1(z)-q_1(x_i)|+|q_1(x_i)-q_2(y_i)| \leq \Cq \CHau \e+ 2C_2 \e^{\frac{1}{80n}}.$$
The estimate holds uniformly for $z$ and $i$.
Tracing the dependence of all the parameters, the modulus of continuity of $\e$ in $\delta$ is double logarithmic.
The two logarithms both come from the quantitative unique continuation, Theorem \ref{uc-Y}.
The considerations in Section \ref{sec-coordinate}, and the choice of $\sigma$ in \eqref{sigma1} and \eqref{choice-sigma} avoid a third logarithm.
One can refer to a similar computation in \cite[Appendix A]{BILL}.

The map $\Psi$ defined in \eqref{def-Psi} is actually a $(2\Cd \e^{\frac18}+2\CHau \e)$-isometry. By the proof of Theorem \ref{main1}, the $\CHau\e$-net $\{x_i\}_{i=1}^{I_0}$ in $M_1$ above is the union of the points in $X$ defined in Definition \ref{def-Vi} and an arbitrary $\CHau\e$-net in $B(p,2r_0)$.
We can simply choose the net in $B(p,2r_0)$ to be the one $\e$-net $\Gamma_{\e}$ taken in Definition \ref{def-rbeta}(2).
Then we see that $\{x_i\}_{i=1}^{I_0}$ is contained in the set $X$ of corresponding points chosen in Definition \ref{def-corres}, and any $\delta$-approximation of the spectral data determine numbers $\h{d}_{ik}$, for $i,k\in \{1,2,\cdots,I_0\}$, which approximate $d_{M_1}(x_i,x_k)$ up to error $\Cd \e^{\frac18}$ by Proposition \ref{prop-metric}.
On $M_2$, the same constructions for the net $\{y_i\}_{i=1}^{I_0}$ determine the same numbers $\h{d}_{ik}$ that approximate $d_{M_2}(y_i,y_k)$ up to error $\Cd \e^{\frac18}$.
Hence,
$$\big| d_{M_1}(x_i,x_k)-d_{M_2}(y_i,y_k) \big| \leq 2\Cd \e^{\frac18},\quad \textrm{ for all }i,k=1,\cdots,I_0.$$
Then for any $x,x'\in M_1$, since $\Omega_i\subset B(x_i,\CHau\e)$, we have
$$\big| d_{M_2} (\Psi(x),\Psi(x')) -d_{M_1}(x,x') \big| < | d_{M_2}(y_i,y_k)-d_{M_1}(x_i,x_k)|+ 2\CHau\e \leq 2\Cd \e^{\frac18}+ 2\CHau \e,$$
for some $i,k$. This shows $\Psi$ is a $(2\CHau \e+2\Cd \e^{\frac18})$-isometry.
\end{proof}

\bigskip

\end{document}